\newcommand{\C}{\mathbb{C}}
\newcommand{\R}{\mathbb{R}}
\newcommand{\Z}{\mathbb{Z}}
\newcommand{\N}{\mathbb{N}}
\newcommand{\Q}{\mathbb{Q}}
\newcommand{\bK}{\mathbb{K}}
\newcommand{\bT}{\mathbb{T}}
\newcommand{\g}{\mathsf{g}}
\newcommand{\liet}{\mathfrak{t}}
\newcommand{\lieg}{\mathfrak{g}}
\newcommand{\affW}{\widetilde{W}}
\newcommand{\extW}{\widehat{W}}
\newcommand{\cR}{\mathcal{R}}
\newcommand{\cRQ}{\mathcal{R}_{\mathbb{Q}}}
\newcommand{\Aut}[1]{\text{\normalfont Aut}(#1)}
\newcommand{\id}[1]{\operatorname{id}_{#1}}
\newcommand{\obj}[1]{\text{obj}(#1)}
\newcommand{\EV}[1]{\text{EV}(#1)}
\newcommand{\cE}{\mathcal{E}}
\newcommand{\cA}{\mathcal{A}}
\newcommand{\cC}{\mathcal{C}}
\newcommand{\cG}{\mathcal{G}}
\newcommand{\sfX}{\mathsf{X}}
\newcommand{\Eig}[2]{\text{Eig}(#1,#2)}
\newcommand{\Endo}[1]{\operatorname{End}(#1)}
\newcommand{\op}{\text{op}}
\newcommand{\lscal}[3]{\tensor*[_{#1}]{\langle #2, #3 \rangle}{}}
\newcommand{\rscal}[3]{\tensor*{\langle #1, #2 \rangle}{_{#3}}}
\newcommand{\rcpt}[2]{\mathcal{K}_{\circ #1}\left(#2\right)}
\newcommand{\lcpt}[2]{\mathcal{K}_{#1 \circ}\left(#2\right)}
\newcommand{\rbdd}[2]{\mathcal{L}_{\circ #1}\left(#2\right)}
\newcommand{\Vfin}{\mathcal{V}^{{\mathrm{fin}}}_{\C}}
\newcommand{\Viso}{\mathcal{V}^{{\mathrm{iso}}}_{\C}}
\newcommand{\MFa}{\mathsf{M}_{\mathnormal{F}}}
\newcommand{\MF}{\mathsf{M}_\mathnormal{F}^{\infty}}
\DeclareMathOperator{\Ad}{Ad}
\newtheorem{theorem}{Theorem}[section]
\newtheorem{lemma}[theorem]{Lemma}
\newtheorem{corollary}[theorem]{Corollary}
\newtheorem{prop}[theorem]{Proposition}
\newtheorem{definition}[theorem]{Definition}
\newtheorem{remark}[theorem]{Remark}
\newtheorem{example}[theorem]{Example}
\newcommand*\circled[1]{\tikz[baseline=(char.base)]{
            \node[shape=circle,draw,inner sep=0.8pt] (char) {#1};}}
\newcommand{\extp}{\@ifnextchar^\@extp{\@extp^{\,}}}
\def\@extp^#1{\mathop{\bigwedge\nolimits^{\!#1}}}
\begin{document}
\title[Equivariant higher twisted $K$-theory of $SU(n)$]{Equivariant higher twisted $K$-theory of $SU(n)$ \\ for exponential functor twists}
\author{David E.\ Evans \and Ulrich Pennig}

\maketitle

\begin{abstract}
	We prove that each exponential functor on the category of finite-dimensional complex inner product spaces and isomorphisms gives rise to an equivariant higher (ie.\ non-classical) twist of $K$-theory over $G=SU(n)$. This twist is represented by a Fell bundle $\cE \to \cG$, which reduces to the basic gerbe for the top exterior power functor. The groupoid $\cG$ comes equipped with a $G$-action and an augmentation map $\cG \to G$, that is an equivariant equivalence. The $C^*$-algebra $C^*(\cE)$ associated to $\cE$ is stably isomorphic to the section algebra of a locally trivial bundle with stabilised strongly self-absorbing fibres. Using a version of the Mayer-Vietoris spectral sequence we compute the equivariant higher twisted $K$-groups $K^G_*(C^*(\cE))$ for arbitrary exponential functor twists over $SU(2)$, and also over $SU(3)$ after rationalisation.
\end{abstract}

\section{Introduction}
In three groundbreaking articles \cite{FreedHopkinsTelemanI:2011,FreedHopkinsTelemanII:2013,FreedHopkinsTelemanIII:2011} Freed, Hopkins and Teleman proved a close connection between the Verlinde algebra of a compact Lie group $G$ and its twisted equivariant $K$-theory, where $G$ acts on itself by conjugation. In case $G$ is simply connected their theorem boils down to the following statement: Let $R_k(G)$ be the Verlinde ring of positive energy representations of the loop group $LG$ at level~$k \in \Z$. Then the following $R(G)$-modules are naturally isomorphic 
\begin{equation} \label{eqn:FHT}
	R_k(G) \cong \,^{\tau(k)}K^{\dim(G)}_G(G)\ .
\end{equation}
This identification turns into an isomorphism of rings if the left hand side is equipped with the fusion product and the right hand side with the product induced by Poincar\'{e} duality and the group multiplication. The representation theory of loop groups also dictates the fusion rules of sectors in conformal field theories associated to these groups. In joint work with Gannon the first named author proved that it is in fact possible to recover the full system of modular invariant partition functions of these CFTs from the twisted $K$-theory picture \cite{EvansGannon-Modular:2009,EvansGannon-ModularII:2013}. This approach has been particularly successful in the case of the loop groups of tori, where the modular invariants can be expressed as $KK$-elements. Even exotic fusion categories, like the ones constructed by Tambara-Yamagami have elegant descriptions in terms of $K$-theory as shown in \cite{EvansGannon-tori:2019}.

For a simple and simply connected Lie group $G$ the classical equivariant twists of $K$-theory over $G$ are classified up to isomorphism by the equivariant cohomology group $H^3_G(G;\Z) \cong \Z$. The twist $\tau(k)$ in the FHT theorem corresponds to $(k+\check{h}(G))$ times the generator, where $\check{h}(G)$ is the dual Coxeter number of $G$. There are several ways to represent the generator of $H^3_G(G;\Z)$ geometrically: As a Dixmier-Douady bundle (a locally trivial bundle of compact operators) \cite{Meinrenken-conjugacy:2009}, as a bundle of projective spaces \cite{AtiyahSegal-TwistedK:2004}, in terms of (graded) central extensions of groupoids \cite{FreedHopkinsTelemanI:2011} or as a (bundle) gerbe usually called the basic gerbe \cite{Mickelsson:2003,Meinrenken-basicgerbe:2003,MurrayStevenson-basic_gerbe:2008}.

From a homotopy theoretic viewpoint (and neglecting the group action for a moment) twisted $K$-theory is an example of a twisted cohomology theory. If $R$ denotes an $A_{\infty}$ ring spectrum, then it comes with a space of units $GL_1(R)$ and has a classifying space of $R$-lines $BGL_1(R)$, which turns out to be an infinite loop space for $E_{\infty}$ ring spectra \cite{AndoBlumbergGepner-RLines:2014,AndoBlumbergGepner-Twists:2010}. In this situation the twists of $R$-theory are classified by $[X, BGL_1(R)]$. If $KU$ denotes a ring spectrum representing $K$-theory, then the group $[X,BGL_1(KU)]$ splits off  
\[
	H^1(X; \Z/2\Z) \times H^3(X;\Z)
\]
equipped with the multiplication 
\[
	(\omega_1, \delta_1) \cdot (\omega_2, \delta_2) = (\omega_1 + \omega_2, \delta_1 + \delta_2 + \beta(\omega_1 \cup \omega_2))\ ,
\] 
where $\beta$ denotes the Bockstein homomorphism. The twists classified by $H^1(X;\Z/2\Z)$ can easily be included in the classical picture for example by using graded central extensions as in \cite{FreedHopkinsTelemanI:2011} or graded projective bundles \cite{AtiyahSegal-TwistedK:2004}.

However, it was already pointed out by Atiyah and Segal in \cite{AtiyahSegal-TwistedK:2004} that the group $[X,BGL_1(KU)]$ is in general more subtle than ordinary cohomology. In joint work with Dadarlat the second author found an operator-algebraic description of the twists of $K$-theory which covers the full group $[X,BGL_1(KU)]$ and is based on locally trivial bundles of stabilised strongly self-absorbing $C^*$-algebras \cite{DadarlatP-DixmierDouady:2016}. This picture is also easily adapted to include groups of the form $[X, BGL_1(KU[\tfrac{1}{d}])]$, i.e.\ the twists of the localisation of $K$-theory away from an integer $d$. 

Descriptions of the basic gerbe over $SU(n)$ were developed by Meinrenken in~\cite{Meinrenken-basicgerbe:2003} and by Mickelsson in \cite{Mickelsson:2003}. Based on these results Murray and Stevenson found another construction in terms of bundle gerbes \cite{MurrayStevenson-basic_gerbe:2008}. Motivated by their work, the isomorphism \eqref{eqn:FHT} in the FHT theorem and the operator algebraic model in the non-equivariant case \cite{DadarlatP-DixmierDouady:2016} we introduce higher (i.e.\ non-classical) equivariant twists over $G = SU(n)$ in this paper. Our construction takes an exponential functor $F \colon \Viso \to \Viso$ on the category of complex finite-dimensional inner product spaces and isomorphisms as input and produces an equivariant Fell bundle $\cE \to \cG$. As in \cite{MurrayStevenson-basic_gerbe:2008} the groupoid $\cG = Y^{[2]}$ is obtained as the fibre product of the surjective submersion $Y \to G$ with itself, where  
\begin{equation} \label{eqn:surj_submersion}
	Y = \left\{ (g,z) \in G \times S^1 \setminus \{1\} \ | \ z \notin \EV{g} \right\}\ .
\end{equation}
The functor $F$ gives rise to a strongly self-absorbing $C^*$-algebra $\MF$ (a UHF-algebra) defined as the infinite tensor product
\(
	\MF = \Endo{F(\C^n)}^{\otimes \infty}
\). Note that $S^1 \setminus \{1\}$ inherits a total order from an orientation of the circle. The fibre of $\cE$ over $(g,z_1,z_2) \in \cG$ with $z_1 \leq z_2$ is given by 
\[
	\cE_{(g,z_1,z_2)} = F\left(\bigoplus_{z_1 < \lambda < z_2 \atop \lambda \in \EV{g}} \Eig{g}{\lambda} \right) \otimes \MF\ ,
\]
where the empty sum is understood as the zero vector space. The exponential structure of $F$ gives natural isomorphisms $F(0) \to \C$ and $F(V \oplus W) \to F(V) \otimes F(V)$. These give rise to the Fell bundle multiplication
\[
	\cE_{(g,z_1,z_2)} \otimes \cE_{(g,z_2,z_3)} \to \cE_{(g,z_1,z_3)}
\]
over points with $z_1 \leq z_2 \leq z_3$. In Thm.~\ref{thm:extension_of_Fell_bdls} we show that this structure extends to a Fell bundle over all of $\cG$, when we take
\(
	\cE_{(g,z_1,z_2)} = (\cE_{(g,z_2,z_1)})^{\text{op}}
\).

The space $Y$ and the groupoid $\cG$ come with canonical actions of $G$, such that the augmentation map $\cG \to G$ is an equivariant equivalence, if $G$ is equipped with the adjoint action. This action extends to the Fell bundle $\cE \to \cG$. Our main examples of exponential functors are the top exterior power functor $\extp^{\mathrm{top}}$ and the full exterior algebra $\extp^*$. For $F = \left(\extp^{\mathrm{top}}\right)^{\otimes m}$ with $m \in \N$ our construction reproduces the basic gerbe from \cite{MurrayStevenson-basic_gerbe:2008} (note that in this case $\MF = \C$) giving rise to the classical equivariant twist discussed above. 

A family of equivariant higher twists we will then focus on arises from $F = \left(\extp^*\right)^{\otimes m}$. Further examples of exponential functors and a classification result in terms of involutive $R$-matrices are discussed in \cite{Pennig:2018}.

The $C^*$-algebra $C^*(\cE)$ associated to the Fell bundle $\cE$ is a $C(G)$-algebra that is stably $C(G)$-isomorphic  to the section algebra of a locally trivial bundle $\cA \to G$ with fibre $\MF \otimes \bK$. Neglecting the $G$-action, this bundle is classified by a continuous map
\(
	G \to BGL_1\left(KU[\tfrac{1}{d}]\right)
\), 
where $d = \dim(F(\C))$. At this point our work makes contact with \cite{DadarlatP-DixmierDouady:2016}. The functor $F$ gives rise to group homomorphisms $U(k) \to U(F(\C)^{\otimes k})$, which induce a  map $BBU_{\oplus} \to BBU_{\otimes}[\tfrac{1}{d}]$ as shown in \cite{Pennig:2018}. We conjecture that the classifying map $G \to BGL_1\left(KU[\tfrac{1}{d}]\right)$ agrees up to homotopy with 
\[
	\tau^n_F \colon SU(n) \to SU \simeq BBU_{\oplus} \to BBU_{\otimes}[\tfrac{1}{d}] \to BGL_1\left(KU[\tfrac{1}{d}]\right)
\]
considered in \cite{Pennig:2018}, but defer the proof to future work. We expect an analogous statement to be true in an equivariant setting, but since the units of genuine $G$-equivariant ring spectra are a matter of current research in equivariant stable homotopy theory (see for example \cite[Ex.~5.1.17]{book:Schwede}) we will come back to this question in future work as well. 

The $G$-equivariant $K$-theory of $C^*(\cE)$ is a module over the localisation $K_0^G(\MF) \cong R_F(G) = R(G)[F(\rho)^{-1}]$ of the representation ring $R(G)$ at $F(\rho)$, where $\rho$ denotes the standard representation of $SU(n)$ on $\C^n$. In general these $R_F(G)$-modules are computable via a spectral sequence similar to the one used in \cite{Meinrenken-conjugacy:2009}. Our computations for $SU(2)$ are summarised in Thm.~\ref{thm:twisted_eq_K_of_SU2}. In this case the spectral sequence reduces to the Mayer-Vietoris sequence. 

We also compute the rationalised higher equivariant twisted $K$-theory of $SU(3)$ for general exponential functor twists, see  Thm.~\ref{thm:main_theorem_SU(3)}. Here we adapt the approach developed in  \cite{AdemCantareroGomez-TwistedK:2018} to our situation. In particular, we identify the rationalised chain complex on the $E^1$-page of the spectral sequence as the one computing Bredon cohomology of the maximal torus $\bT^2$ of $SU(3)$ with respect to a certain local coefficient system. 

If $F$ is given by a tensor power of the top exterior algebra functor, we recover the groups on the left hand side of \eqref{eqn:FHT} in both cases.

Even though to our knowledge equivariant exponential functor twists of the form studied here have not been considered in the literature before, similar exponential morphisms played a crucial role in \cite{Teleman-ModuliOfSurfaces:2004}. Instead of a localisation of $KU$, the ring spectrum considered by Teleman is $KU[[t]]$, the power series completion of $K$-theory and he shows that 
\[
	^\tau\!K^{\dim(G)}_G(G)\otimes \C[[t]]
\] 
is a Frobenius algebra and therefore extends to a 2D topological field theory for admissible higher twists $\tau$. 

Our low dimensional computations for $n = 2$ and for $n=3$ (after rationalisation) suggest that the higher twisted $K$-groups $K^G_{\dim(G)}(C^*(\cE))$ share a lot of the remarkable properties of the classical ones: 
\begin{itemize}
	\item[$\bullet$] The spectral sequence collapses on the $E_2$-page for all exponential functor twists.
	\item[$\bullet$] The $R_F(G)$-module $K_{\dim(G)}^G(C^*(\cE))$ is a quotient of $R_F(G)$ by an ideal. In particular, it carries a ring structure. 
	\item[$\bullet$] The local coefficient system in the $SU(3)$-case over the Lie algebra~$\liet$ of $\bT^2$ is determined by a homomorphism 
	\[
		\pi_1(\bT^2) \to GL_1(R_F(\bT^2))
	\]
	similar to the one constructed in \cite[Prop.~3.4]{AdemCantareroGomez-TwistedK:2018}, which is reminiscent of the appearance of the flat line bundles in \cite[(3.4)]{FreedHopkinsTeleman-Complex:2008}.
\end{itemize}
The case of odd tensor powers of the full exterior algebra twist over $SU(2)$ is of particular interest, since the ring $K^G_{\dim(G)}(C^*(\cE))$ is isomorphic to a fusion ring that has the same fusion rules as the even part of the classical twist on $SU(2)$ at odd levels (see Remark~\ref{rem:fusion_rules}). The question whether the ring structure we discovered stems from an intrinsic structure of the Fell bundle (similar to the multiplicative gerbe we get in the classical case) is one of the main future directions of this work. Even if this turns out to be false, we still expect these $K$-groups to come from modules over fusion categories, and we see our results as an indication for this.

The paper is structured as follows: Section 2 contains preliminary material about Morita-Rieffel equivalences between $C^*$-algebras. Exponential functors (Def.~\ref{def:exp_functor}) are revisited as well. In Section~3 we construct equivariant Fell bundles that represent equivariant higher twists. The corresponding equivariant $C^*$-algebras of sections are analysed in Sec.~4, with explicit computations of the equivariant $K$-theory for $SU(2)$ and $SU(3)$ (the latter after rationalisation) in the final section. More details can be found in the roadmap in the next section.

\subsection*{Roadmap to the main results}
In Sec.~3 we first consider for $G = SU(n)$ the groupoid $\cG = Y^{[2]}$ with $Y$ as in \eqref{eqn:surj_submersion}. It comes equipped with a $G$-action and a $G$-equivariant map $\cG \to G$ and is motivated by \cite{MurrayStevenson-basic_gerbe:2008}, where $\cG$ is used to construct the basic gerbe for unitary groups. It is more convenient for the analytical parts of this paper than the construction in \cite{FreedHopkinsTelemanI:2011}, which uses infinite-dimensional spaces, like the path space of the group. Composable elements in $\cG$ live in the same fibre over $G$. 

The main goal of Sec.~3 is the construction of the equivariant Fell bundle $\cE \to \cG$ from an exponential functor $F$, which is achieved in several steps: The fibre of the map $\cG \to G$ over $g \in G$ is a subgroupoid that embeds into $(S^1 \setminus \{1\})^2 \cong \R \times \R$. This gives rise to an orientation of the groupoid elements, ie.\ given $(g,z_1,z_2) \in \cG$ we could have $z_1 < z_2$ or $z_1 \geq z_2$. Using this orientation the groupoid $\cG$ decomposes into three disjoint components $\cG_-$, $\cG_0$ and $\cG_+$ that are closed with respect to composition, but not preserved by taking inverses. 

Any Fell bundle $\cE \to \cG$ comes with an associative multiplication $\cE_{g_1} \times \cE_{g_2} \to \cE_{g_1g_1}$ covering the groupoid composition. Our Fell bundle $\cE \to \cG$ will first be constructed over the subcategory $\cG_{0} \cup \cG_+$, where the multiplication is easy to define using the structural data from the exponential functor. This produces a saturated half-bundle $\cE_{0,+}$ in the sense of Def.~\ref{def:sat_half_bundle}. There is a canonical choice that extends the bundle over $\cG_-$. The issue is that we have to extend the multiplication as well and ensure its associativity. In Sec.~3.1 we state the technical result, proven in the appendix, that any saturated half-bundle extends uniquely up to isomorphism to a saturated Fell bundle (Thm.~\ref{thm:extension_of_Fell_bdls}). Moreover, if the half-bundle carries a $G$-action in an appropriate sense, then this action extends uniquely to one on the Fell bundle (Cor.~\ref{cor:G-action_on_ext}). 

We then focus on the construction of the half-bundle $\cE_{0,+}$ associated to an exponential functor $F$ in Sec.~3.2 (see Lem.~\ref{lem:cE_is_a_half_bundle}) and discuss the group action on it in Sec.~3.3 (see Cor.~\ref{cor:Fell_bundle}). Combining the results from all three sections we obtain a saturated $G$-equivariant Fell bundle $\cE \to \cG$ describing the exponential functor twist over $SU(n)$ associated to $F$.

In Sec.~4 we look at the $C^*$-algebra $C^*(\cE)$ associated to $\cE$ and prove that it is a continuous $C(G)$-algebra (see Lem.~\ref{lem:C(G)-algebra}). Its fibre $C^*(\cE_g)$ over $g \in G$ is Morita equivalent to $\MF$ (Lem.~\ref{lem:Fell_condition}). Since $C^*(\cE)$ satisfies the generalised Fell condition from \cite{DadarlatP-DixmierDouady:2016}, it is stably isomorphic to a locally trivial bundle classified by a continuous map to $BGL_1(KU[d^{-1}])$ where $d =\dim(F(\C))$ by Cor.~\ref{cor:CStarBundle}. Forgetting the group action, the Fell bundle $\cE \to \cG$ provided by our construction therefore gives rise to a twist of $KU[d^{-1}]$ in the sense of stable homotopy theory. Using results from strongly self-absorbing $C^*$-dynamical systems we then prove in Prop.~\ref{prop:module_structure} in Sec.~4.2 that its terms are in fact modules over $R_F(G) \cong R(G)[F(\rho)^{-1}]$ and so is $K_*^G(C^*(\cE))$.

The final section contains the computations of the equivariant higher twisted $K$-groups in the cases $SU(2)$ (Sec.~5.1) and $SU(3)$ (Sec.~5.2). For $SU(2)$ the result boils down to a Mayer-Vietoris argument and is summarised in Thm.~\ref{thm:twisted_eq_K_of_SU2}. In the general case, ie.\ for $SU(n)$, the Mayer-Vietoris sequence has to be replaced by a spectral sequence that is constructed in Prop.~\ref{prop:spectral_seq}. The simplex $\Delta^{n-1}$ parametrises the eigenvalues of group elements in $SU(n)$. Pulling back an appropriate cover of $\Delta^{n-1}$ along the map $q \colon SU(n) \to \Delta^{n-1}$ results in an equivariant cover of $SU(n)$ and the sequence in Prop.~\ref{prop:spectral_seq} is the Mayer-Vietoris spectral sequence associated to that cover. It is concentrated in even degrees, where it is given by the chain complex \eqref{eqn:chain_cplx} in the case of $SU(3)$. We determine the differentials on the $E_1$-page in Lemma~\ref{lem:diff_d0_d1} and compare the resulting chain complex with the one computing the Bredon cohomology of the Lie algebra of the maximal torus in Lem.~\ref{lem:coh_with_coeff} in Sec.~5.2.1. The identification \eqref{eqn:needs_Q} that allows us to compute these groups as fixed points under a Weyl group action is only true after rationalisation. As shown in Thm.~\ref{thm:Koszul} the $E_2$-page of the spectral sequence obtained from \eqref{eqn:chain_cplx} has only one non-vanishing term in the even rows, which is determined in Lem.~\ref{lem:submod} and allows us to compute the rational equivariant higher twisted $K$-groups in Thm.~\ref{thm:main_theorem_SU(3)}.

\subsection*{Acknowledgements}
The second author would like to thank Dan Freed and Steffen Sagave for helpful discussions. Part of this work was completed while the authors were staying at the Newton Institute during the programme ``Operator algebras: subfactors and their applications''. Both authors would like to thank the institute for its hospitality. Their research was supported in part by the EPSRC grants EP/K032208/1 and EP/N022432/1. 




\section{Preliminaries}

\subsection{Bimodules and Morita-Rieffel equivalences}
In this section we collect some well-known facts about Hilbert bimodules and Morita-Rieffel equivalences. This is mainly to fix notation. A detailed introduction to Hilbert $C^*$-modules can be found in \cite{Lance-Toolkit:1995}. Let $A,B$ be separable unital $C^*$-algebras.
\begin{definition} \label{def:AB-bimod}
	An \emph{$A$-$B$-bimodule} is a right Hilbert $B$-module $V$ together with a $*$-homomorphism 
	\[
		\psi_A \colon A \to \rcpt{B}{V}\ ,
	\]
	where $\rcpt{B}{V}$ denotes the compact adjointable right $B$-linear operators on~$V$. An $A$-$B$-bimodule is called a \emph{(Morita-Rieffel) equivalence bimodule} if $V$ is full and $\psi_A$ is an isomorphism.
\end{definition}

Given a right Hilbert $B$-module $V$ with inner product $\rscal{\cdot}{\cdot}{B}$ we can associate a left Hilbert $B$-module $V^\op$ to it in a natural way. The vector space underlying $V^\op$ is $\overline{V}$, i.e.\ $V$ equipped with the conjugate linear structure. For a given element $v \in V$ we denote the corresponding element in $V^\op$ by $v^*$. The left multiplication by $b \in B$ is defined by
\[
	b\,v^* = (v\,b^*)^*
\]
and the left $B$-linear inner product is $\lscal{B}{v_1^*}{v_2^*} = \left(\rscal{v_2}{v_1}{B}\right)^*$. 

The space $\hom_B(V,B)$ of right $B$-linear adjointable morphisms is a left Hilbert $B$-module via the left multiplication $(b \cdot \varphi)(v) = b\varphi(v)$ and the inner product $\lscal{B}{\varphi_1}{\varphi_2} = \varphi_1 \circ \varphi_2^* \in \hom_B(B,B) \cong B$. The map 
\[
	V^\op \to \hom_B(V,B) \qquad , \qquad v^* \mapsto \rscal{v}{\,\cdot\,}{B}
\] 
provides a canonical isomorphism of left Hilbert $B$-modules and we will sometimes identify the two. Note that there is a conjugate linear bijection
\begin{equation} \label{eqn:star_on_mod}
	V \to V^\op \quad , \quad v \mapsto v^* 	
\end{equation}
which satisfies $(v\,b)^* = b^*\,v^*$.

The definition of $A$-$B$ equivalence bimodules may seem asymmetric in $A$ and $B$. It is actually not: Let $V$ be an $A$-$B$ equivalence bimodule. It carries a left multiplication by $a \in A$ defined by
\(
	a\,v = \psi_A(a)v
\)
and a left $A$-linear inner product given by
\[
	\lscal{A}{v_1}{v_2} = \psi_A^{-1}\left(v_1\rscal{v_2}{\,\cdot\,}{B}\right)\ .
\]
With respect to this multiplication and inner product $V$ is a full left Hilbert $A$-module. The rank $1$ operator $\lscal{A}{\,\cdot\,}{v_1}\,v_2$ agrees with the right multiplication by $\rscal{v_1}{v_2}{B}$. Since $V$ is full, the compact left $A$-linear operators therefore agree with $B$, but the multiplication is reversed, i.e.\ we obtain an isomorphism
\[
	\psi_B \colon B^\op \to \lcpt{A}{V} 
\]
that sends $b$ to right multiplication by $b$. Thus, we could alternatively define an $A$-$B$ equivalence bimodule as a full left Hilbert $A$-module together with an isomorphism $\psi_B$ as above. 

If $V$ is an $A$-$B$ equivalence bimodule, then $V^\op$ is a full left Hilbert $B$-module. Let 
\[
	\psi_A^\op \colon A^\op \to \lcpt{B}{V^\op}
\]
be the $*$-homomorphism given by $\psi_A^\op(a)(v^*) = (\psi_A(a^*)v)^*$. Conjugation by $v \mapsto v^*$ induces a conjugate linear isomorphism $\rcpt{B}{V} \cong \lcpt{B}{V^\op}$. From this we deduce that $\psi_A^\op$ is a (linear) $*$-isomorphism. Thus, $V^\op$ is a $B$-$A$ equivalence bimodule. 

Note that the left $A$-linear and right $B$-linear inner product on an $A$-$B$-equivalence bimodule $V$ satisfy the compatibility condition 
\begin{equation} \label{eqn:comp_inner_prod}
	v_1\rscal{v_2}{v_3}{B} = \lscal{A}{v_1}{v_2}v_3
\end{equation}
for all $v_1,v_2,v_3 \in V$.

Let $A$, $B$ and $C$ be separable unital $C^*$-algebras and let $V$ be an $A$-$B$ equivalence bimodule and $W$ be a $B$-$C$ equivalence bimodule. The tensor product over $B$ gives an $A$-$C$ equivalence bimodule that we will denote by 
\[
	V \otimes_B W\ .
\]
For details about this construction we refer the reader to \cite[Chap.~4]{Lance-Toolkit:1995} or \cite{Rieffel-MoritaEquiv:1982}. The left $A$-linear inner product provides an $A$-$A$ bimodule isomorphism 
\[
	\lscal{A}{\cdot}{\cdot} \colon V \otimes_B V^\op \to A \ .
\]
Similarly, $\rscal{\cdot}{\cdot}{B} \colon V^\op \otimes_A V \to B$ is a bimodule isomorphism as well. Concerning the opposite bimodule of a tensor product, there is a canonical isomorphism 
\begin{equation} \label{eqn:op_of_tensor}
	(V \otimes_B W)^\op \cong W^\op \otimes_B V^\op
\end{equation}
given on elementary tensors by $(v \otimes w)^* \mapsto w^* \otimes v^*$. 

Let $G$ be a compact group and let $\alpha \colon G \to \Aut{B}$ be a continuous action of $G$ on $B$, where $\Aut{B}$ is equipped with the pointwise-norm topology. We will call $B$ a $G$-algebra for short. A \emph{$G$-equivariant right Hilbert $B$-module} \cite[Def.~1 and Def.~2]{Kasparov-ThmStinespring:1980} is defined to be a right Hilbert $B$-module $V$ together with an action of $G$ (denoted by $g \cdot v$ for $g \in G, v \in V$) that satisfies
\begin{enumerate}[a)]
	\item $g \cdot (vb) = (g \cdot v)\alpha_g(b)$,
	\item $\rscal{g\cdot v}{g \cdot w}{B} = \alpha_g(\rscal{v}{w}{B})$,
	\item $(g, v) \mapsto g \cdot v$ is continuous.
\end{enumerate} 
If $V$ is a $G$-equivariant right Hilbert $B$-module, then $V^\op$ equipped with the action $g \cdot v^* = (g \cdot v)^*$ is a $G$-equivariant left Hilbert $B$-module. The group $G$ acts continuously on the $C^*$-algebra $\rcpt{B}{V}$ by conjugation. If $A$ denotes another $G$-algebra, then a $G$-equivariant $A$-$B$-bimodule is an $A$-$B$-bimodule~$V$ where the structure map $\psi \colon A \to \rcpt{B}{V}$ is $G$-equivariant.

\subsection{Exponential functors}
Let $\Vfin$ be the category of finite-dimensional complex inner product spaces and linear maps and denote by $\Viso \subset \Vfin$ the subgroupoid with the same objects but unitary isomorphisms as its morphisms. The higher twists we are going to construct will depend on the choice of an exponential functor on $\Viso$. In the context of higher twists these were first considered in \cite{Pennig:2018}, which also contains a classification of those exponential functors that arise from restrictions of polynomial exponential functors on $\Vfin$ in terms of involutive solutions to the Yang-Baxter equation (involutive $R$-matrices). The following definition is taken from \cite[Def.~2.1]{Pennig:2018} and we refer the reader to that paper for a detailed description of the three conditions a), b) and c) stated below.

\begin{definition} \label{def:exp_functor}
	An \emph{exponential functor on } $\Vfin$ (resp.\ $\Viso$) is a triple consisting of a functor $F \colon \Vfin \to \Vfin$ (resp.\ $F \colon \Viso \to \Viso$) together with natural unitary isomorphisms
	\[
		\tau_{V,W} \colon F(V \oplus W) \to F(V) \otimes F(W)
	\]
	and $\iota \colon F(0) \to \C$ that satisfy the following conditions
	\begin{enumerate}[a)]
		\item $F$ preserves adjoints,
		\item $\tau$ is associative,
		\item $\tau$ is unital with respect to $\iota$.
	\end{enumerate}
For an exponential functor $F$ (on $\Vfin$ or $\Viso$) let $d(F)=\dim(F(\C))$. We define the \emph{dimension spectrum of $F$} to be 
\[
\text{Dim}(F) := \{ \dim(F(V))\ |\ V \in \obj{\Viso}\} = \{ d(F)^n \ | \ n \in \N_0\}\ .
\]
\end{definition}
The exterior algebra functor $F(V) = \extp^*V$ provides a natural example of an exponential functor. The symmetric algebra $\text{Sym}^*(V)$ of a vector space $V$ comes with natural transformations $\tau$ and $\iota$ as above. It is, however, ruled out by the fact that $\text{Sym}^*(V)$ is infinite-dimensional. The exterior algebra functor can be modified as follows: Let $W$ be a finite-dimensional inner product space and consider
\[
	F^W(V) = \bigoplus_{k=0}^\infty W^{\otimes k} \otimes \extp^k V
\]
As outlined in \cite[Sec.~2.2]{Pennig:2018} this provides a polynomial exponential functor $F^W \colon \Vfin \to \Vfin$. 
	
\section{Higher twists via Fell bundles}
In this section we will consider a groupoid $\cG$ that carries an action of $G = SU(n)$ and comes with a surjection $\cG \to G$ that is equivariant with respect to the conjugation action of $G$ on itself. In fact, $\cG$ will be Morita equivalent to $G$. We will then construct a Fell bundle $\pi \colon \cE \to \cG$ such that its total space $\cE$ comes with an action of $G$ and $\pi$ is equivariant. The groupoid $\cG$ decomposes into three disjoint parts $\cG_{-} \cup \cG_0 \cup \cG_{+}$ in such a way that $\cG_{0,+} = \cG_0 \cup \cG_{+}$ generates the whole groupoid. We will construct the analogue of a saturated Fell bundle over $\cG_{0,+}$ first, before extending it to all of $\cG$. To achieve this we will need the extension theorem proven in the next section.

\subsection{An extension theorem for saturated Fell bundles}
In this section we consider the following situation: Let $\cG$ be a topological groupoid with object space $\cG^{(0)}$. Suppose that we have a decomposition of the space of arrows
\begin{equation} \label{eqn:decomp}
	\cG = \cG_{-} \cup \cG_0 \cup \cG_{+}	
\end{equation}
into disjoint open (and thus also closed) subspaces. Let $\cG^{(2)}$ be the space of composable arrows. For $\mathcal{U}, \mathcal{V} \subset \cG$ define 
\[
	\mathcal{U} \cdot \mathcal{V} = \left\{ \g_1 \cdot \g_2 \in \cG \ |\ (\g_1,\g_2) \in \cG^{(2)} \text{ and } \g_1 \in \mathcal{U}, \g_2 \in \mathcal{V} \right\} 
\]
and $\mathcal{U}^{-1} = \left\{ \g^{-1} \in \cG \ | \ \g \in \mathcal{U} \right\}$. We will assume that the decomposition (\ref{eqn:decomp}) satisfies the following conditions
\begin{align}
	\left(\cG_+\right)^{-1} &= \cG_- \label{eqn:inv_pm} \\
	\left(\cG_0\right)^{-1} &= \cG_0 \label{eqn:inv_zero} \\
	\cG_+ \cdot \cG_+ &\subseteq \cG_+ \label{eqn:m_pp}\\
	\cG_0 \cdot \cG_+ &= \cG_+ \label{eqn:m_zp} \\
	\cG_+ \cdot \cG_0 &= \cG_+ \label{eqn:m_pz} \\
	\cG_0 \cdot \cG_0 &= \cG_0 \label{eqn:m_zz}
\end{align}
Since the identities on the objects of $\cG$ are fixed points of the inversion and the decomposition is disjoint, we obtain from (\ref{eqn:inv_pm}) and (\ref{eqn:inv_zero}) that they must be contained in $\mathcal{G}_0$. Therefore (\ref{eqn:m_pz}) is actually equivalent to $\cG_+ \cdot \cG_0 \subseteq \cG_+$ and likewise for (\ref{eqn:m_zp}). Taking inverses we also obtain 
\begin{align*}
	\cG_- \cdot \cG_- &\subseteq \cG_-\\ 
	\cG_0 \cdot \cG_- &= \cG_- \\ 
	\cG_- \cdot \cG_0 &= \cG_-
\end{align*}
Let $\cG_{0,+} = \cG_0 \cup \cG_{+}$.

\begin{definition} \label{def:sat_half_bundle}
Let $A$ be a separable unital $C^*$-algebra. A \emph{saturated (Fell) half-bundle} is given by the following data: A Banach bundle $\cE_{0,+} \to \cG_{0,+}$ with the property that 
\[
	\left.\cE_{0,+}\right|_{\cG_0} = \cG_0 \times A
\]
and a continuous multiplication map $\mu \colon \cE_{0,+}^{(2)} \to \cE_{0,+}$ where 
\[
\cE_{0,+}^{(2)} = \left\{ (e_1,e_2) \in \cE_{0,+}^2 \ |\ (\pi(e_1),\pi(e_2)) \in \cG^{(2)} \right\} \subset \left(\cE_{0,+}\right)^2
\] 
is equipped with the subspace topology. This data has to satisfy the following conditions:

\begin{enumerate}[a)]
	\item The multiplication $\mu$ is bilinear and associative. It extends the canonical one on $\left.\cE_{0,+}\right|_{\cG_0} = \cG_0 \times A$ and fits into a commutative diagram 
	\[
	\begin{tikzcd}
			\cE_{0,+}^{(2)} \ar[d,"\pi \times \pi" left] \ar[r,"\mu"] & \cE_{0,+} \ar[d,"\pi"] \\
			\cG_{0,+}^{(2)}  \ar[r] & \cG_{0,+}
	\end{tikzcd}
	\]
	in which the lower horizontal map is the groupoid multiplication. We will use the abbreviated notation $e_1 \cdot e_2 := \mu(e_1,e_2)$.
	\item There is a continuous inner product $\rscal{\,\cdot\,}{\,\cdot\,}{A} \colon \cE_{0,+} \times_{\cG_{0,+}} \cE_{0,+} \to A \times \cG^{(0)}$ that is right $A$-linear with respect to the multiplication $(e,a) \mapsto e \cdot a$ induced by $\mu$ with $\pi(e) = \g \in \cG_{0,+}$ and $\pi(a) = \id{s(\g)} \in \cG_0$. It fits into the commutative diagram
	\[
	\begin{tikzcd}[column sep=2cm]
		\cE_{0,+} \times_{\cG_{0,+}} \cE_{0,+} \ar[r,"\rscal{\,\cdot\,}{\,\cdot\,}{A}"] \ar[d,"\pi" left] & A \times \cG^{(0)} \ar[d,"\pi"] \\
		\cG_{0,+} \ar[r,"s" below] & \cG^{(0)}
	\end{tikzcd}
	\]
	and restricts to $\rscal{(a_1,\g)}{(a_2,\g)}{A} = (a_1^*a_2, s(\g))$ for $(a_i,\g) \in \left.\cE_{0,+}\right|_{\cG_0}$. It is compatible with the norm in the sense that 
	\begin{equation} \label{eqn:inner_prod_and_norm}
		\lVert \rscal{e}{e}{A} \rVert = \lVert e \rVert^2
	\end{equation}
	and turns each fibre $\left(\cE_{0,+}\right)_{\g}$ into a right Hilbert $A$-module. The left multiplication $(a,e) \mapsto a \cdot e$ with $\pi(e) = \g$ and $\pi(a) = \id{r(\g)}$ is compact adjointable with respect to this inner product with adjoint given by $a^*$. Moreover, this left multiplication induces a $*$-isomorphism
	\[
		\psi_{A,\g} \colon A \to \rcpt{A}{\left(\cE_{0,+}\right)_{\g}}
	\]
	between $A$ and the compact $A$-linear operators on each fibre. In other words, each fibre $\left(\cE_{0,+}\right)_\g$ is an $A$-$A$ equivalence bimodule. 
	\item The Hilbert $A$-bimodule structure on the fibres is compatible with the multiplication in the sense that $\mu$ induces an $A$-$A$ bimodule isomorphism 
	\[
	\begin{tikzcd}[column sep=2cm]
			\left( \cE_{0,+} \right)_{\g_1} \otimes_A \left( \cE_{0,+} \right)_{\g_2} \ar[r,"\mu" above, "\cong" below] & \left( \cE_{0,+} \right)_{\g_1\g_2}		
	\end{tikzcd}
	\]
	for each composable pair $(\g_1,\g_2) \in \cG^{(2)}_{0,+}$.
\end{enumerate}
\end{definition}

\begin{remark}
Note that Def.~\ref{def:sat_half_bundle} c) and \eqref{eqn:inner_prod_and_norm} imply that the norm on $\cE_{0,+}$ is submultiplicative in the sense that $\lVert e_1 \cdot e_2 \rVert \leq \lVert e_1 \rVert \cdot \lVert e_2 \rVert$. 
\end{remark}

The main reason for considering half-bundles is the following extension theorem that will be proven in the appendix:
\begin{theorem} \label{thm:extension_of_Fell_bdls}
	Let $(\cE_{0,+}, \mu, \rscal{\,\cdot\,}{\,\cdot\,}{A})$ be a saturated half-bundle. There is a saturated Fell bundle $\pi \colon \cE \to \cG$ with the property that $\left.\cE\right|_{\cG_{0,+}} = \cE_{0,+}$, the multiplication of $\cE$ restricts to the one of $\cE_{0,+}$ and for $e_1,e_2 \in \cE_g$ we have  
	\(
		e_1^* \cdot e_2 = \rscal{e_1}{e_2}{A}\ .
	\) Moreover, $\pi \colon \cE \to \cG$ is unique up to (a canonical) isomorphism of Fell bundles. 
\end{theorem}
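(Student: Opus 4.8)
The plan is to build $\cE$ by hand from $\cE_{0,+}$, using the opposite-bimodule construction of Section~2 for the part of $\cE$ lying over $\cG_-$ and exploiting that every fibre is an \emph{invertible} $A$-$A$-bimodule in order to extend the multiplication. Since $\cG_-$ and $\cG_{0,+}$ are disjoint open-and-closed subsets of $\cG$, no gluing is required: over $\cG_{0,+}$ we declare $\left.\cE\right|_{\cG_{0,+}} := \cE_{0,+}$, and for $g \in \cG_-$ (so that $g^{-1} \in \cG_+$ by \eqref{eqn:inv_pm}) we set $\cE_g := (\cE_{0,+})_{g^{-1}}^{\op}$, topologising $\left.\cE\right|_{\cG_-}$ as the copy of $\left.\cE_{0,+}\right|_{\cG_+}$ carrying the fibrewise conjugate-linear structure and the bundle projection twisted by the homeomorphism $g \mapsto g^{-1}$. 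The fibrewise conjugate-linear bijections $v \mapsto v^*$ of \eqref{eqn:star_on_mod} over $\cG_+$, their inverses over $\cG_-$, and the map $(h,a) \mapsto (h^{-1}, a^*)$ over $\cG_0$ assemble into a continuous, fibrewise conjugate-linear, isometric involution $*\colon \cE \to \cE$ covering inversion on $\cG$.

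Keeping the given $\mu$ on composable pairs inside $\cG_{0,+}$, the essential new cases are the two families of products
\[
	\cE_{g_1} \times \cE_{g_2} \longrightarrow \cE_{g_1 g_2}, \qquad g_1 \in \cG_+,\ g_2 \in \cG_-,\ g_1 g_2 \in \cG_{0,+},
\]
together with their mirror images ($g_1 \in \cG_-$, $g_2 \in \cG_+$, $g_1 g_2 \in \cG_{0,+}$). In the first family $g_1 = (g_1 g_2)\cdot g_2^{-1}$ is a product of two arrows of $\cG_{0,+}$, so Def.~\ref{def:sat_half_bundle}~c) makes $\mu$ an $A$-$A$-bimodule isomorphism $(\cE_{0,+})_{g_1 g_2} \otimes_A (\cE_{0,+})_{g_2^{-1}} \xrightarrow{\ \cong\ } (\cE_{0,+})_{g_1}$; writing $e_2 = f^*$ with $f \in (\cE_{0,+})_{g_2^{-1}}$, we put
\[
	e_1 \cdot f^* := \bigl(\mathrm{id} \otimes \lscal{A}{\,\cdot\,}{\,\cdot\,}\bigr)\bigl(\mu^{-1}(e_1) \otimes f^*\bigr) \ \in\ (\cE_{0,+})_{g_1 g_2},
\]
using the identification $(\cE_{0,+})_{g_2^{-1}} \otimes_A (\cE_{0,+})_{g_2^{-1}}^{\op} \cong A$; the mirror family is defined symmetrically through $g_2 = g_1^{-1} \cdot (g_1 g_2)$ and the identification $(\cE_{0,+})_{g_1^{-1}}^{\op} \otimes_A (\cE_{0,+})_{g_1^{-1}} \cong A$. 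Every other composable pair falls, after at most one application of the rule $e_1 \cdot e_2 := (e_2^* \cdot e_1^*)^*$ (forced by the Fell axiom), into one of the cases already treated — a short bookkeeping with \eqref{eqn:inv_pm}--\eqref{eqn:m_zz} shows that no further possibilities occur — so $\mu$ is now defined on all of $\cE^{(2)}$.

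It then remains to check that this multiplication is well-defined, bilinear, continuous, associative and $*$-anti-multiplicative; together with $e^* e = \rscal{e}{e}{A} \ge 0$ for $e \in \cE_g$ (whence $\lVert e^* e \rVert = \lVert e \rVert^2$ by \eqref{eqn:inner_prod_and_norm}) and with saturatedness — which is immediate, since every induced map $\cE_{g_1} \otimes_A \cE_{g_2} \to \cE_{g_1 g_2}$ is by construction a composite of isomorphisms — this exhibits $\cE \to \cG$ as the desired saturated Fell bundle. I expect associativity to be the main obstacle: one has to run through the composable triples whose intermediate and total products lie in different components of $\cG$, and in each case unwind the definition $e_1 e_2 = (e_2^* e_1^*)^*$ until the asserted equality becomes a statement about equivalence bimodules that can be reduced, via associativity of $\mu$, the compatibility \eqref{eqn:comp_inner_prod} of the left- and right-$A$-valued inner products, and the identification \eqref{eqn:op_of_tensor}, to something already known; the bookkeeping is elementary but lengthy. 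A smaller technical point is the continuity of the fibrewise inverse of $\mu$ (needed for continuity of the extended multiplication); here one uses that the ``type'' of a composable pair is locally constant, so that $\cE^{(2)}$ breaks up into finitely many open-and-closed pieces on each of which the multiplication is given by one of the explicit formulas above.

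For uniqueness, let $\cE' \to \cG$ be another saturated Fell bundle with $\left.\cE'\right|_{\cG_{0,+}} = \cE_{0,+}$, whose multiplication restricts to the given one and which satisfies $e_1^* \cdot e_2 = \rscal{e_1}{e_2}{A}$ for $e_1, e_2 \in \cE'_g$. The involution of any such bundle on the $\cG_0$-fibres is forced to be $(h,a) \mapsto (h^{-1}, a^*)$ by that normalisation and the form of $\mu$ on $\cG_0 \times A$, and the explicit formulas of the previous paragraph express $\mu$ on \emph{every} composable pair purely in terms of $\left.\mu\right|_{\cG_{0,+}}$ and the canonical inner products of the fibres — as a formal consequence of the Fell axioms and the stated normalisation, hence valid for $\cE'$ as well. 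Therefore the map $\Phi\colon \cE \to \cE'$ equal to the identity over $\cG_{0,+}$ and sending $e \in \cE_g$ ($g \in \cG_-$) to $(e^*)^{*'}$, where $*'$ is the involution of $\cE'$ and $e^* \in (\cE_{0,+})_{g^{-1}} = \cE'_{g^{-1}}$, is a well-defined homeomorphism compatible with $*$ and $\mu$, i.e.\ an isomorphism of Fell bundles; and it is plainly the only such isomorphism restricting to the identity over $\cG_{0,+}$, which gives the asserted canonical uniqueness.
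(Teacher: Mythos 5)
Your construction is the same as the paper's: extend over $\cG_-$ by $(\cE_-)_{\g} := (\cE_{0,+})_{\g^{-1}}^{\op}$, build the involution from $v \mapsto v^*$, and extend $\mu$ by decomposing via $\mu^{-1}$ and pairing off with the $A$-valued inner products (you treat the $(+,-,+)$ and $(-,+,+)$ cases directly and get $(+,-,-)$, $(-,+,-)$, $(-,-,-)$ by $e_1 e_2 = (e_2^* e_1^*)^*$, whereas the paper treats $(+,-,+)$ and $(+,-,-)$ directly — the same partition after dualising). The formulas agree with the paper's, and your uniqueness argument (determine the involution from the normalisation and observe that the multiplication formulas are forced) matches the paper's in substance. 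The one place where you fall short is associativity: you correctly name the ingredients — associativity of $\mu$, the compatibility \eqref{eqn:comp_inner_prod}, and the identification \eqref{eqn:op_of_tensor} — but you leave the actual verification as ``lengthy bookkeeping.'' This is precisely where the paper invests most of its effort, introducing a graphical calculus, deriving the intermediate relation $\lscal{A}{e_1 e_2}{e_1'e_2'} = \lscal{A}{e_1\lscal{A}{e_2}{e_2'}}{e_1'}$ from the inner-product-preserving property of $\mu$, and working through the twelve composable triples (with one case computed in full). Your outline makes it plausible that the check would go through, but without at least one representative mixed-sign triple worked out — in particular one where the intermediate product and the total product lie in different components, which is where the compatibility relations are genuinely used — a reader cannot be confident the diagram chase closes up. I'd call this a gap in the writeup rather than a gap in the method.
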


Since we want to construct an equivariant Fell bundle from an equivariant half-bundle, we need to understand group actions on both structures. Restricting to the following kind of actions is natural in this context:
\begin{definition} \label{def:admissible_action}
	Let $G$ be a compact group and let $\cG$ be a groupoid with a decomposition as in \eqref{eqn:decomp} that has the properties \eqref{eqn:inv_pm} -- \eqref{eqn:m_zz}. We call an action of $G$ on $\cG$ by groupoid automorphisms  \emph{admissible} if it preserves the decomposition from \eqref{eqn:decomp}, i.e.\ each group element yields a homeomorphism $\cG_- \to \cG_-$ and similarly for $\cG_0$ and $\cG_+$, respectively. 
\end{definition}

\begin{definition} \label{def:eq_sat_half_bundle}
Let $G$ be a compact group that acts admissibly on $\cG$ and let $A$ be a separable unital $G$-algebra. A \emph{$G$-equivariant saturated half-bundle} is a saturated half-bundle $\cE_{0,+}$ carrying a continuous $G$-action such that the projection map $\cE_{0,+} \to \cG_{0,+}$ is equivariant and the following properties hold:
\begin{enumerate}[a)]
	\item On $\cE_0 = \left.\cE_{0,+}\right|_{\cG_0} = \cG_0 \times A$ the action restricts to the diagonal action of $G$ on $\cG_0$ and $A$.
	\item The multiplication map $\mu \colon \cE_{0,+}^{(2)} \to \cE_{0,+}$ is $G$-equi\-variant (where the domain is equipped with the diagonal $G$-action) and the inner product satisfies 
	\[
		\rscal{g \cdot e_1}{g \cdot e_2}{A} = \alpha_g(\rscal{e_1}{e_2}{A})
	\]
	for all $g \in G$. 
\end{enumerate}  
\end{definition}

\begin{corollary} \label{cor:G-action_on_ext}
Suppose that $\cG$ carries an admissible action by a compact group $G$. Let $(\cE_{0,+}, \mu, \rscal{\,\cdot\,}{\,\cdot\,}{A})$ be a $G$-equivariant saturated half-bundle. Let $\cE$ be the extension of $\cE_{0,+}$ to a saturated Fell bundle as in Thm.~\ref{thm:extension_of_Fell_bdls}. 

Then the $G$-action on $\cE_{0,+}$ extends to a continuous $G$-action on $\cE$ in such a way that the multiplication map and the projection $\pi \colon \cE\to \cG$ are equivariant and $g\cdot e^* = (g \cdot e)^*$ for all $e \in \cE$ and $g \in G$. This extension is unique.
\end{corollary}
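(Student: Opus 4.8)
The plan is to let the required identities dictate the only possible formula for the action on the part of $\cE$ lying over $\cG_-$, which gives uniqueness at once, and then to verify by a short case analysis that this formula does define a continuous action compatible with the extended multiplication.

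Recall from the proof of Theorem~\ref{thm:extension_of_Fell_bdls} that $\cE = \cE_- \cup \cE_{0,+}$ with $\left(\cE_-\right)_\g = \left(\cE_{0,+}\right)_{\g^{-1}}^\op$, so every element of $\cE_-$ has the form $e^*$ for a unique $e \in \cE_+$. Hence the requirement $g\cdot e^* = (g\cdot e)^*$ leaves no freedom: one must keep the given action on $\cE_{0,+}$ and set $g\cdot e^* := (g\cdot e)^*$ on $\cE_-$, so uniqueness is immediate. For existence, recall that the $*$-operation on $\cE$ is a continuous conjugate-linear bijection restricting to a homeomorphism $\cE_- \to \cE_+$, and that the admissible action on $\cG$ restricts the given action to a continuous map $G \times \cE_+ \to \cE_+$; composing $G\times\cE_- \to G\times\cE_+ \to \cE_+ \to \cE_-$ shows that $(g,e^*)\mapsto(g\cdot e)^*$ is continuous, so we obtain a continuous map $G\times\cE\to\cE$. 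It is an action because $(gh)\cdot e^* = ((gh)\cdot e)^* = (g\cdot(h\cdot e))^* = g\cdot\big((h\cdot e)^*\big) = g\cdot(h\cdot e^*)$ and the unit of $G$ acts trivially. Equivariance of $\pi$ over $\cG_-$ follows from equivariance over $\cG_{0,+}$ together with the fact that a groupoid automorphism commutes with $\text{inv}$: $\pi(g\cdot e^*) = \text{inv}\big(g\cdot\pi(e)\big) = g\cdot\text{inv}\big(\pi(e)\big) = g\cdot\pi(e^*)$.

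The substantive step is that the extended multiplication $\mu$ is $G$-equivariant. By the relations $g\cdot e^* = (g\cdot e)^*$ and $(e_1\cdot e_2)^*=e_2^*\cdot e_1^*$ it is enough to treat cases $1$, $2$, $3$ of the table in the proof of Theorem~\ref{thm:extension_of_Fell_bdls}. Case $1$ is the hypothesis that $\mu$ is equivariant on $\cE_{0,+}$. For cases $2$ and $3$ one checks that every arrow in the commutative diagrams defining the extension of $\mu$ is $G$-equivariant: the multiplication $\mu$ on $\cE_{0,+}$ and the right $A$-action are equivariant by hypothesis; the canonical isomorphism $(V\otimes_A W)^\op\cong W^\op\otimes_A V^\op$ of \eqref{eqn:op_of_tensor} is natural, hence equivariant; and the left inner product $\lscal{A}{\,\cdot\,}{\,\cdot\,}$ on each fibre is equivariant — this is derived, not assumed, from equivariance of the structure maps $\psi_{A,\g}$ of the equivariant bimodules $\left(\cE_{0,+}\right)_\g$ together with the compatibility \eqref{eqn:comp_inner_prod} and the relation $\rscal{g\cdot e_1}{g\cdot e_2}{A}=\alpha_g(\rscal{e_1}{e_2}{A})$, which force $\lscal{A}{g\cdot e_1}{g\cdot e_2}=\alpha_g(\lscal{A}{e_1}{e_2})$, and likewise $\rscal{\,\cdot\,}{\,\cdot\,}{A}$ remains equivariant on $\cE_-$. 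Since a composite of equivariant maps is equivariant and the relative tensor products carry the diagonal action, $\mu$ is equivariant in cases $2$ and $3$, hence in all six cases.

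The main obstacle is bookkeeping rather than conceptual: one must ensure that the (derived) equivariance of $\lscal{A}{\,\cdot\,}{\,\cdot\,}$ is genuinely available at each point where the extension of $\mu$ was defined, and that the diagonal $G$-action on $\left(\cE_{0,+}\right)_{\g_1}\otimes_A\left(\cE_{0,+}\right)_{\g_2}$ is precisely the one rendering the vertical bimodule isomorphisms in those diagrams equivariant. With that in hand, equivariance of $\mu$ on all of $\cE$ and the identities $g\cdot e^*=(g\cdot e)^*$ and equivariance of $\pi$ hold by construction, and uniqueness was already observed, completing the proof.
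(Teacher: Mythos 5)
Your argument is correct and follows exactly the same route as the paper's (one-sentence) proof: define the action on $\cE_-$ by $g\cdot e^* := (g\cdot e)^*$, note this is forced, and verify the stated properties. You have simply filled in the verification details — continuity, equivariance of $\pi$ via the fact that groupoid automorphisms commute with inversion, and equivariance of the extended $\mu$ by tracing equivariance through the defining diagrams (including the derived equivariance of $\lscal{A}{\,\cdot\,}{\,\cdot\,}$ from \eqref{eqn:comp_inner_prod}) — which the paper leaves implicit.
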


\begin{proof}
The condition $g \cdot e^* = (g \cdot e)^*$ uniquely fixes the group action on $\cE_-$ and has all properties stated in the corollary.
\end{proof}

\subsection{Construction of the Fell bundle over $SU(n)$} \label{subsec:Fell_bundle}
The groupoid $\cG$ alluded to in the introduction to this section is now constructed as follows: Let $G = SU(n)$, $\bT \subset \C$ the unit circle and let $Z = \bT \setminus \{1\} \cong (0,1)$. For $g \in G$ denote the set eigenvalues of $g$ (in its standard representation on $\C^n$) by $\EV{g}$. Let $Y$ be the space
\[
	Y = \left\{ (g,z) \in G \times Z \ | \ z \notin \EV{g} \right\}\ .
\]
There is a canonical quotient map $\pi \colon Y \to G$. The groupoid $\cG$ is now given by the fibre product $Y^{[2]}$ of $Y$ with itself over $G$, i.e.\  
\[
	\cG = Y^{[2]} = \{ (y_1,y_2) \in Y \times Y\ |\ \pi(y_1) = \pi(y_2) \}
\] 
equipped with the subspace topology\footnote{We will view an element $(g,z_1,z_2) \in Y^{[2]}$ as a morphism from $(g,z_2)$ to $(g,z_1)$. Thus, the composition is $(g,z_1,z_2) \cdot (g,z_2,z_3) = (g,z_1,z_3)$.}. Note that we can identify this space with
\[
	Y^{[2]} = \left\{ (g,z_1,z_2) \in G \times Z \times Z \ | \ z_i \notin \EV{g} \text{ for } i \in \{1,2\} \right\}\ .
\]
Since $Z$ is homeomorphic to an open interval via $e \colon (0,1) \to Z$ with $e(\varphi) = \exp(2\pi i\,\varphi)$ we can equip it with a total ordering by defining $z_2 = e(\varphi_2) \geq z_1 = e(\varphi_1)$ if and only if $\varphi_2 \geq \varphi_1$. Now we can decompose $Y^{[2]}$ into disjoint subspaces $Y^{[2]} = Y^{[2]}_+ \cup Y^{[2]}_0 \cup Y^{[2]}_-$ with 
\begin{align*}
	Y^{[2]}_+ & = \{ (g,z_1,z_2) \in Y^{[2]}\ | \ z_2 > z_1 \text{ and }  \exists \lambda \in \EV{g},\ z_2 > \lambda > z_1 \}\ , \\
	Y^{[2]}_0 & = \{ (g,z_1,z_2) \in Y^{[2]}\ | \ \nexists \lambda \in \EV{g}, \max(z_1,z_2) > \lambda > \min(z_1,z_2) \}\ , \\
	Y^{[2]}_- & = \{ (g,z_1,z_2) \in Y^{[2]}\ | \ z_2 < z_1 \text{ and }  \exists \lambda \in \EV{g},\ z_2 < \lambda < z_1 \}\ .
\end{align*}

Fix a (continuous) exponential functor $(F,\tau,\iota)$ on $\Viso$ as in Def.~\ref{def:exp_functor}. Consider the standard representation of $G$ on $\C^n$ and let $\MFa = \Endo{F(\C^n)}$. Let $\MF$ be the UHF-algebra given by the infinite tensor product 
\[
	\MF = \bigotimes_{i=1}^{\infty} \MFa\ .
\]
We will construct a saturated half-bundle $\cE_{0,+}$ over $\cG_{0,+} = Y^{[2]}_+ \cup Y^{[2]}_0$ such that over $\cG_0 = Y^{[2]}_0$ it coincides with the trivial $C^*$-algebra bundle
\[
	\cE_0 = \cG_0 \times \MF\ .
\]
To understand the fibre of $\cE_{0,+}$ over $\cG_+$ fix $g \in G$ and let $(g,z_1,z_2) \in \cG_+$. Consider the following subspaces of $\C^n$: 
\begin{gather*}
	E(g,z_1,z_2) = \bigoplus_{\overset{z_1 < \lambda < z_2}{\lambda \in \EV{g}}} \Eig{g}{\lambda} \\
	E^{\prec}(g,z_1) = \bigoplus_{\overset{\lambda < z_1}{\lambda \in \EV{g}}} \Eig{g}{\lambda} \quad , \quad
	E^{\succ}(g,z_2) = \bigoplus_{\overset{z_2 < \lambda}{\lambda \in \EV{g}}} \Eig{g}{\lambda} \ .
\end{gather*}
and note that the natural transformation $\tau$ from the exponential functor turns the direct sum decomposition of $\C^n$ shown in the next line into a tensor product decomposition shown in the line below:
\begin{align} \label{eqn:FCn_decomp}
	\C^n &= \Eig{g}{1} \oplus E^{\prec}(g,z_1) \oplus E(g,z_1,z_2) \oplus E^{\succ}(g,z_2)  \\
	F(\C^n) &\cong F(\Eig{g}{1}) \otimes F(E^{\prec}(g,z_1)) \otimes F(E(g,z_1,z_2)) \otimes F(E^{\succ}(g,z_2))\notag
\end{align}
Denote the corresponding endomorphism algebras of the tensor factors by
\begin{gather*}
	\MFa(g, z_1, z_2) = \Endo{F(E(g,z_1,z_2))} \quad , \quad \MFa(g,1) = \Endo{F(\Eig{g}{1})}\\ 
	\MFa^{\prec}(g,z_1) = \Endo{F(E^{\prec}(g,z_1))} \quad , \quad
	\MFa^{\succ}(g,z_2) = \Endo{F(E^{\succ}(g,z_2))}\ .
\end{gather*}
Just as in \cite[Sec.~3]{MurrayStevenson-basic_gerbe:2008} it follows that the bundle $E \to \cG_+$ with fibre $E(g,z_1,z_2)$ over $(g,z_1,z_2) \in \cG_+$ is a locally trivial vector bundle. Therefore $F(E)$ is as well. Observe that the endomorphism bundle of $F(E)$ has fibre $\MFa(g,z_1,z_2)$ over $(g,z_1,z_2) \in \cG_+$. The fibre of our half-bundle $\cE_{+}$ will be given by the following locally trivial bundle of right Hilbert $\MF$-modules: 
\[
	\cE_{(g,z_1,z_2)} = F(E(g,z_1,z_2)) \otimes \MF
\]
where the right multiplication is given by right multiplication on $\MF$. The transformation $\tau$ induces a $*$-isomorphism for $(g,z_1,z_2), (g,z_2,z_3) \in \cG_+$ of the form
\begin{equation} \label{eqn:MF_tensor_prod}
	\MFa(g,z_1,z_2) \otimes \MFa(g,z_2,z_3) \to \MFa(g,z_1,z_3)
\end{equation}
To define the multiplication on the fibres of $\cE_+$ we need the next lemma.

\begin{lemma} \label{lem:associative_isos_MF}
There is an isomorphism 
\(
	\varphi_{g,z_1,z_2} \colon \MFa(g,z_1,z_2) \otimes \MF \to \MF
\)
(constructed in the proof) which is associative in the sense that for $(g,z_1,z_2),$ $(g,z_2,z_3) \in \cG_+$ the following diagram commutes:
\[
	\begin{tikzcd}[column sep=2.2cm]
		\MFa(g,z_1,z_2) \otimes \MFa(g,z_2,z_3) \otimes \MF \ar[d] \ar[r,"\id{} \otimes \varphi_{g,z_2,z_3}"] & \MFa(g,z_1,z_2) \otimes \MF \ar[d,"\varphi_{g,z_1,z_2}"] \\
		\MFa(g,z_1,z_3) \otimes \MF \ar[r,"\varphi_{g,z_1,z_3}"] & \MF
	\end{tikzcd}
\]
where the vertical arrow on the left is the isomorphism from \eqref{eqn:MF_tensor_prod}.
\end{lemma}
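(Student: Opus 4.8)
The plan is to construct $\varphi_{g,z_1,z_2}$ out of the natural isomorphism $\tau$ together with a ``shift'' of the infinitely many tensor factors of $\MF$, arranged so as to respect the ordering of the eigenvalues of $g$; associativity then becomes a combinatorial identity in which no braiding isomorphisms intervene. For $(g,z_1,z_2)\in\cG_+$ I would first use the ordered orthogonal decomposition in \eqref{eqn:FCn_decomp} together with $\tau$ to fix a $*$-isomorphism
\[
	\MFa \;\cong\; \MFa(g,1)\otimes\MFa^{\prec}(g,z_1)\otimes\MFa(g,z_1,z_2)\otimes\MFa^{\succ}(g,z_2)\,,
\]
and apply it in every tensor factor of $\MF=\bigotimes_{i\ge1}\MFa$. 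Writing $A:=\MFa(g,z_1,z_2)$, I define $\varphi_{g,z_1,z_2}\colon A\otimes\MF\to\MF$ as the composite of: (i) this fibrewise identification; (ii) the permutation of tensor factors that sends the external $A$-factor to the $\MFa(g,z_1,z_2)$-slot of the first copy of $\MFa$, sends the $\MFa(g,z_1,z_2)$-slot of the $i$-th copy to that of the $(i+1)$-st copy, and fixes every $\MFa(g,1)$-, $\MFa^{\prec}(g,z_1)$- and $\MFa^{\succ}(g,z_2)$-slot; (iii) the inverse fibrewise identification. Being a permutation of tensor factors, (ii) is a $*$-isomorphism, and writing out the inverse of $\varphi_{g,z_1,z_2}$ explicitly (declare the output in $A$ to be the $\MFa(g,z_1,z_2)$-part of the first copy of $\MFa$, then undo the shift) shows $\varphi_{g,z_1,z_2}$ is an isomorphism. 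It depends continuously on $(g,z_1,z_2)$ because the vector bundle $E\to\cG_+$ with fibre $E(g,z_1,z_2)$ is locally trivial and $\tau$ is natural, and on $\cG_0$ the middle tensor factor is $F(0)$, so via $\iota$ the map reduces to the canonical isomorphism $\C\otimes\MF\cong\MF$.

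To prove the associativity square, fix $z_1<z_2<z_3$ with $(g,z_1,z_2),(g,z_2,z_3)\in\cG_+$. Since $z_2\notin\EV{g}$, the eigenvalues of $g$ strictly between $z_1$ and $z_3$ are exactly those in $(z_1,z_2)$ together with those in $(z_2,z_3)$, so $E(g,z_1,z_3)=E(g,z_1,z_2)\oplus E(g,z_2,z_3)$ and the left vertical map is \eqref{eqn:MF_tensor_prod} tensored with $\id_\MF$, where \eqref{eqn:MF_tensor_prod} is the isomorphism $\MFa(g,z_1,z_2)\otimes\MFa(g,z_2,z_3)\to\MFa(g,z_1,z_3)$ induced by $\tau_{E(g,z_1,z_2),E(g,z_2,z_3)}$. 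The crucial observation is that the three decompositions of $\MFa$ entering $\varphi_{g,z_1,z_2}$, $\varphi_{g,z_2,z_3}$ and $\varphi_{g,z_1,z_3}$ are all obtained by grouping \emph{adjacent} factors of the single finer ordered decomposition
\[
	\MFa \;\cong\; \MFa(g,1)\otimes\MFa^{\prec}(g,z_1)\otimes\MFa(g,z_1,z_2)\otimes\MFa(g,z_2,z_3)\otimes\MFa^{\succ}(g,z_3)\,,
\]
which comes from \eqref{eqn:FCn_decomp} for $(g,z_1,z_3)$ by further splitting $E(g,z_1,z_3)$, using $E^{\succ}(g,z_2)=E(g,z_2,z_3)\oplus E^{\succ}(g,z_3)$ and $E^{\prec}(g,z_2)=E^{\prec}(g,z_1)\oplus E(g,z_1,z_2)$; under this finer decomposition the map \eqref{eqn:MF_tensor_prod} is the identity on the combined $\MFa(g,z_1,z_2)\otimes\MFa(g,z_2,z_3)$-factors. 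That passing between these groupings is compatible with all the isomorphisms in sight is an instance of the associativity and unitality axioms for $\tau$, so no symmetry isomorphism is required anywhere.

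Granting this, I would check the square on elementary tensors in the fibrewise finer decomposition. There the shift inside $\varphi_{g,z_2,z_3}$ moves only the $\MFa(g,z_2,z_3)$-column of $\MF$ down by one copy (inserting the external factor at the top), the subsequent shift inside $\varphi_{g,z_1,z_2}$ moves only the $\MFa(g,z_1,z_2)$-column down by one copy, and $\varphi_{g,z_1,z_3}$ (after \eqref{eqn:MF_tensor_prod}, which is the identity here) shifts the combined $\MFa(g,z_1,z_2)$- and $\MFa(g,z_2,z_3)$-columns down by one copy simultaneously. As the two columns are disjoint and each is shifted by one in the same direction, the two resulting permutations of the index set agree entry by entry; combined with the compatibility of the reassembly isomorphisms noted above, both composites in the square send a generic elementary tensor to the same element of $\MF$, which is the claimed commutativity.

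The only place where care is genuinely needed is keeping track of which decomposition of each $\MFa$ is in force at each stage, since $\varphi_{g,z_1,z_2}$, $\varphi_{g,z_2,z_3}$ and $\varphi_{g,z_1,z_3}$ are a priori assembled from three different groupings of the tensor factors; one invokes the associativity axiom of $\tau$ to pass to the common refinement and to see that \eqref{eqn:MF_tensor_prod} becomes the identity there. Once everything is rewritten in the finer decomposition, the shift bookkeeping — and hence the proof — is routine, and the same argument simultaneously yields continuity and, later, $G$-equivariance of $\varphi_{g,z_1,z_2}$, since the map is assembled functorially out of $\tau$.
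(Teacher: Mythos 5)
Your proof is correct and uses the same underlying idea as the paper: $\varphi_{g,z_1,z_2}$ is a shift of the $\MFa(g,z_1,z_2)$-column inside the decomposition of each tensor factor of $\MF$ (with the external factor fed into the first slot), and associativity reduces to the fact that, after refining all three decompositions to the common one
\(
\MFa \cong \MFa(g,1)\otimes\MFa^{\prec}(g,z_1)\otimes\MFa(g,z_1,z_2)\otimes\MFa(g,z_2,z_3)\otimes\MFa^{\succ}(g,z_3),
\)
the shifts in $\varphi_{g,z_1,z_2}$ and $\varphi_{g,z_2,z_3}$ act on disjoint columns and therefore commute, with \eqref{eqn:MF_tensor_prod} becoming the identity on the combined column.

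The only expositional difference is that the paper defines $\varphi_{g,z_1,z_2}$ via finite-level maps $\varphi^{(k)}_{g,z_1,z_2} \colon \MFa(g,z_1,z_2)\otimes\MFa^{\otimes k}\to\MFa^{\otimes(k+1)}$ (padding the new slot with units) and then passes to the colimit — checking that $\varphi^{(k)}$ and $\varphi^{(k+1)}$ intertwine the structure maps $B\mapsto B\otimes 1$ — whereas you write the shift directly as a bijection of slots in the infinite tensor product. The colimit formulation automatically keeps the map inside the infinite tensor product (which is a closure of an algebraic direct limit), so it sidesteps the need to argue that a slot permutation of an infinite index set extends; your version is fine here since the bijection matches slots of identical type, but you should be aware that ``permutation of tensor factors'' on an infinite tensor product does require the index-set bijection to be type-preserving and does not literally come from a permutation of $\N$ alone. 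Your treatment of associativity via the five-factor common refinement is actually more explicit than the paper, which asserts a finite-level commutative square without spelling out the disjoint-column reasoning; this part of your write-up is a genuine clarification.
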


\begin{proof}
To construct $\varphi_{g,z_1,z_2}$ first note that the decomposition \eqref{eqn:FCn_decomp} yields a corresponding decomposition of the algebra $\MFa$, which we will also denote by $\tau$ by a slight abuse of notation:
\begin{equation}
\begin{tikzcd} \label{eqn:tau}
	\MFa \ar[r,"\tau" above, "\cong" below] & \MFa(g,1) \otimes \MFa^{\prec}(g,z_1) \otimes \MFa(g,z_1,z_2) \otimes \MFa^{\succ}(g,z_2)\ .
\end{tikzcd}
\end{equation}
Let $\MFa^{1,\prec}(g,z_1) = \MFa(g,1) \otimes \MFa^{\prec}(g,z_1)$ and define
\[
	\varphi_{g,z_1,z_2}^{(k)} \colon \MFa(g,z_1,z_2) \otimes \MFa^{\otimes k} \to \MFa^{\otimes (k+1)}
\]
to be the following composition
\[
\begin{tikzcd}
\MFa(g,z_1,z_2) \otimes \MFa^{\otimes k} \ar[d,"\id{} \otimes \tau^{\otimes k}"] \\
\MFa(g,z_1,z_2) \otimes (\MFa^{1,\prec}(g,z_1) \otimes \MFa(g,z_1,z_2) \otimes \MFa^{\succ}(g,z_2))^{\otimes k} \ar[d,"\alpha^{(k)}_{g,z_1,z_2}"] \\
 (\MFa^{\prec}(g,z_1) \otimes \MFa(g,z_1,z_2) \otimes \MFa^{\succ}(g,z_2))^{\otimes (k+1)} \ar[d,"(\tau^{-1})^{\otimes (k+1)}"] \\
 \MFa^{\otimes (k+1)}
\end{tikzcd}
\]
with the endomorphism $\alpha^{(k)}_{g,z_1,z_2}$ given by
\begin{align*}
	& \alpha^{(k)}_{g,z_1,z_2}(T \otimes (A_1 \otimes B_1 \otimes C_1) \otimes \dots \otimes (A_k \otimes B_k \otimes C_k)) \\
	=\ & (A_1 \otimes T \otimes C_1) \otimes (A_2 \otimes B_1 \otimes C_2) \otimes \dots \otimes (A_k \otimes B_{k-1} \otimes C_k) \otimes (1 \otimes B_k \otimes 1)
\end{align*}
The endomorphism $\varphi^{(k)}_{g,z_1,z_2}$ fits into the following commutative diagram
\[
\begin{tikzcd}[column sep=2.8cm, row sep=1.5cm]
	\MFa(g,z_1,z_2) \otimes \MFa^{\otimes k} \ar[r,"A \otimes B \mapsto A \otimes B \otimes 1"] \ar[d,"\varphi^{(k)}_{g,z_1,z_2}"] & \MFa(g,z_1,z_2) \otimes \MFa^{\otimes (k+1)}  \ar[d,"\varphi^{(k+1)}_{g,z_1,z_2}"] \\
		\MFa^{\otimes (k+1)} \ar[r,"B \mapsto B \otimes 1"] \ar[ur,"\psi^{(k+1)}_{g,z_1,z_2}"] & \MFa^{\otimes (k+2)} 
\end{tikzcd}
\]
where $\psi^{(k)}_{g,z_1,z_2}$ is an endomorphism constructed analogously to $\varphi^{(k)}_{g,z_1,z_2}$ by conjugating the endomorphism 
\[
\begin{tikzcd}
 (\MFa^{1,\prec}(g,z_1) \otimes \MFa(g,z_1,z_2) \otimes \MFa^{\succ}(g,z_2))^{\otimes k} \ar[d,"\beta^{(k)}_{g,z_1,z_2}"] \\
\MFa(g,z_1,z_2) \otimes (\MFa^{1,\prec}(g,z_1) \otimes \MFa(g,z_1,z_2) \otimes \MFa^{\succ}(g,z_2))^{\otimes k} 
\end{tikzcd}
\]
given by 
\begin{align*}
	& \beta^{(k)}_{g,z_1,z_2}((A_1 \otimes B_1 \otimes C_1) \otimes \dots \otimes (A_k \otimes B_k \otimes C_k)) \\
	=\ & B_1 \otimes (A_1 \otimes B_2 \otimes C_1) \otimes \dots \otimes (A_{k-1} \otimes B_k \otimes C_{k-1}) \otimes (A_k \otimes 1 \otimes C_k)
\end{align*}
with the corresponding tensor products of $\tau$. We define the homomorphisms
\begin{align*}
	\varphi_{g,z_1,z_2} & \colon \MFa(g,z_1,z_2) \otimes \MFa^{\infty} \to \MFa^{\infty} \\
	\psi_{g,z_1,z_2} & \colon \MFa^{\infty} \to \MFa(g,z_1,z_2) \otimes \MFa^{\infty}	
\end{align*}
as the ones induced by $\varphi^{(k)}_{g,z_1,z_2}$ and $\psi^{(k)}_{g,z_1,z_2}$ on the colimits. The diagram above shows that $\varphi_{g,z_1,z_2}$ and $\psi_{g,z_1,z_2}$ are inverse to each other. The associativity condition stated above can be seen from the colimit of the following commutative diagram 
\[
\begin{tikzcd}[column sep=2.7cm]
	\MFa(g,z_1,z_2) \!\otimes\! \MFa(g,z_2,z_3) \!\otimes\! \MFa^{\otimes k} \ar[r,"\id{}\!\otimes\varphi^{(k)}_{g,z_2,z_3}"] \ar[d] & 
	\MFa(g,z_1,z_2) \!\otimes\! \MFa^{\otimes (k+1)} \ar[d,"\varphi^{(k+1)}_{g,z_1,z_2}"] \\
	\MFa(g,z_1,z_3) \!\otimes\! \MFa^{\otimes (k+1)} \ar[r,"\varphi^{(k+1)}_{g,z_1,z_3}"] & 
	\MFa^{\otimes (k+2)}		
\end{tikzcd}
\]  
in which the vertical arrow on the left is the map from \eqref{eqn:MF_tensor_prod} tensored with the homomorphism $A \mapsto A \otimes 1$.
\end{proof}

\begin{corollary} \label{cor:phi_plus}
Let $E \to \cG_+$ be the vector bundle with fibre $E(g,z_1,z_2)$ over $(g,z_1,z_2) \in \cG_+$. The isomorphisms $\varphi_{g,z_1,z_2}$ constructed in Lemma~\ref{lem:associative_isos_MF} yield a continuous isomorphism of $C^*$-algebra bundles of the form
\[
	\varphi_+ \colon \Endo{F(E)} \otimes \MF \to \cG_+ \times \MF \ .
\]
\end{corollary}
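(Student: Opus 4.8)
The plan is to glue the fibrewise isomorphisms $\varphi_{g,z_1,z_2}$ of Lemma~\ref{lem:associative_isos_MF} into a bundle map over $\cG_+$ and then verify continuity. Recall that $\varphi_{g,z_1,z_2}$ is the inductive limit of the maps $\varphi^{(k)}_{g,z_1,z_2} \colon \MFa(g,z_1,z_2) \otimes \MFa^{\otimes k} \to \MFa^{\otimes(k+1)}$, each of which is an injective $*$-homomorphism, hence isometric, and which are compatible under the connecting maps $B \mapsto B \otimes 1$. I would therefore proceed in two steps: (i) show that for each $k$ the map $\varphi^{(k)}$ is a continuous morphism of locally trivial $C^*$-algebra bundles $\Endo{F(E)} \otimes \MFa^{\otimes k} \to \cG_+ \times \MFa^{\otimes(k+1)}$ over $\cG_+$; and (ii) show that continuity survives the passage to the colimit. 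Fibrewise bijectivity of $\varphi_+$ is already contained in Lemma~\ref{lem:associative_isos_MF}, and running the same two steps for the inverse maps $\psi_{g,z_1,z_2}$ (assembled from the $\beta^{(k)}_{g,z_1,z_2}$) shows that $\varphi_+^{-1}$ is continuous as well, so that $\varphi_+$ is an isomorphism of $C^*$-algebra bundles.

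For step~(i) I would first record that, just as for $E$ in \cite[Sec.~3]{MurrayStevenson-basic_gerbe:2008}, the subbundles of $\cG_+ \times \C^n$ with fibres $\Eig{g}{1}$, $E^{\prec}(g,z_1)$ and $E^{\succ}(g,z_2)$ are locally trivial over $\cG_+$: the defining conditions $z_1, z_2 \notin \EV{g}$ force the multiplicities of the eigenvalues of $g$ lying below $z_1$, between $z_1$ and $z_2$, above $z_2$, and equal to $1$ to be locally constant on $\cG_+$, while the associated spectral projections vary continuously. Applying the (continuous) functor $F$ and using naturality of $\tau$, the decomposition \eqref{eqn:FCn_decomp} becomes a continuous isomorphism of Hilbert-space bundles over $\cG_+$, so that \eqref{eqn:tau} upgrades to a continuous isomorphism of $C^*$-algebra bundles; in particular $\Endo{F(E)}$ is locally trivial with locally constant fibre dimension, and \eqref{eqn:MF_tensor_prod} assembles to a continuous bundle isomorphism. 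Finally $\varphi^{(k)}_{g,z_1,z_2}$ is, by construction, the composite of $\id{} \otimes \tau^{\otimes k}$, the reshuffling homomorphism $\alpha^{(k)}_{g,z_1,z_2}$, and $(\tau^{-1})^{\otimes(k+1)}$; the first and last are continuous over $\cG_+$ by the above, while $\alpha^{(k)}$, in any local trivialisation of the four tensor factors, is given by a fixed combinatorial recipe (permute the legs, insert units) that does not depend on $(g,z_1,z_2)$ and is therefore continuous. Composing these, $\varphi^{(k)}$ is a continuous bundle morphism.

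For step~(ii) I would work over a connected open set $U \subset \cG_+$ with $\Endo{F(E)}|_U \cong U \times B_0$ for a fixed matrix algebra $B_0$, so that $\varphi_+|_U$ takes the form $(u, T) \mapsto \Phi_u(T)$, $U \times (B_0 \otimes \MF) \to \MF$, with each $\Phi_u$ isometric and each restriction to $U \times (B_0 \otimes \MFa^{\otimes k})$ continuous by step~(i). A standard $\varepsilon/3$ argument then yields continuity: given $(u_0, T_0)$ and $\varepsilon > 0$, choose $T'$ in the dense subalgebra $B_0 \otimes \bigcup_k \MFa^{\otimes k}$ with $\lVert T_0 - T' \rVert$ small, and combine the isometry identity $\lVert \Phi_u(T) - \Phi_u(T') \rVert = \lVert T - T' \rVert$ with continuity of $u \mapsto \Phi_u(T')$. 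Gluing over a trivialising cover of $\cG_+$ gives continuity of $\varphi_+$ on the whole total space, and the same argument applied to the maps $\psi_{g,z_1,z_2}$ completes the proof.

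The step I expect to be the main obstacle is (ii): since $\varphi_{g,z_1,z_2}$ exists only as an inductive limit, continuity cannot be checked termwise, and the $\varepsilon/3$ argument works precisely because the finite-stage maps are \emph{isometric} uniformly in the base point — which is why it matters that each $\varphi^{(k)}_{g,z_1,z_2}$ is an injective $*$-homomorphism rather than merely a bounded linear map. A secondary, more bookkeeping point is arranging the local trivialisations so that all four eigenspace subbundles appearing in \eqref{eqn:FCn_decomp} are trivialised simultaneously and compatibly with $\tau$; this is exactly where the locally constant dimension count over $\cG_+$ inherited from \cite{MurrayStevenson-basic_gerbe:2008} is needed.
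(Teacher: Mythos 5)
Your proposal is correct, but it goes by a different route from the paper. You treat $\varphi_+$ genuinely as an inductive limit: verify continuity of each finite-stage $\varphi^{(k)}$ as a morphism of locally trivial bundles and then use a uniform $\varepsilon/3$ argument (exploiting that the $\varphi^{(k)}$ are isometric) to pass to the colimit. The paper avoids the colimit argument entirely: it groups the decomposition of $\C^n$ coarsely as $E \oplus E^\perp$, forms the fibrewise infinite tensor products $\MF(E)$ and $\MF(E^\perp)$ (locally trivial bundles of UHF-algebras), observes that $\tau$ gives a continuous isomorphism $\cG_+ \times \MF \to \MF(E) \otimes \MF(E^\perp)$, and then identifies the assembled $\alpha^{(k)}$ not as stages of a colimit but directly as the shift isomorphism $\alpha^\infty \colon \Endo{F(E)} \otimes \MF(E) \otimes \MF(E^\perp) \to \MF(E) \otimes \MF(E^\perp)$, which is manifestly a continuous bundle isomorphism (it is a fixed $*$-isomorphism in each local trivialisation). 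Thus $\varphi_+ = \tau^{-1} \circ \alpha^\infty \circ (\id{}\otimes\tau)$ is continuous by composition, with no limiting argument. Both approaches hinge on the same ingredients — local triviality of the eigenspace subbundles of $\cG_+ \times \C^n$ (yours uses all four pieces from \eqref{eqn:FCn_decomp}, the paper only $E$ and $E^\perp$), naturality of $\tau$, and the base-point-independence of the shuffle $\alpha^{(k)}$ — so your proof is a legitimate alternative; the paper's version is a bit slicker by packaging the shift once and for all, while yours is the more generic ``continuity survives inductive limits of uniformly isometric maps'' template, which generalises more readily to settings where the limit map does not admit such a clean global description.
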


\begin{proof}
	Since $\MFa(g,z_1,z_2) = \Endo{F(E(g,z_1,z_2))}$, the isomorphisms from Lemma~\ref{lem:associative_isos_MF} indeed piece together to give a map $\varphi_+$ as described in the statement. Therefore the only issue left to prove is continuity of $\varphi_+$. First observe that $E$ is by definition a subbundle of the trivial bundle $\cG_+ \times \C^n$. Its orthogonal complement $E^{\perp}$ is a locally trivial vector bundle as well. By continuity of $F$ we obtain locally trivial bundles $F(E)$ and $F(E^\perp)$. Let $\MF(E)$, respectively $\MF(E^\perp)$, be the UHF-algebras obtained as the fibrewise infinite tensor product of $\Endo{F(E)}$, respectively $\Endo{F(E^\perp)}$, and note that the $*$-homomorphism $\tau$ from \eqref{eqn:tau} translates into a continuous isomorphism of $C^*$-algebra bundles
	\[
		\begin{tikzcd}[column sep=2cm]
			\cG_+ \times \MF \ar[r,"\tau"] & \MF(E) \otimes \MF(E^\perp)	
		\end{tikzcd}
	\]
	The maps $\alpha^{(k)}_{g,z_1,z_2}$ from Lemma~\ref{lem:associative_isos_MF} induce another continuous isomorphism of $C^*$-algebra bundles: 
	\[
		\alpha^{\infty} \colon \Endo{F(E)} \otimes \MF(E) \otimes \MF(E^\perp) \to \MF(E) \otimes \MF(E^\perp)
	\]
	which shifts $\Endo{F(E)}$ into the tensor factor $\MF(E)$. By definition $\varphi_+$ is obtained by conjugating $\alpha^{\infty}$ by $\tau$ and therefore is continuous.
\end{proof}

Let $E \to \cG_+$ be the vector bundle from Cor.~\ref{cor:phi_plus}. Let $\cE_0 = \cG_0 \times \MF$, 
\[
	\cE_+ = F(E) \otimes \MF
\]
and let $\cE_{0,+} = \cE_0 \cup \cE_+$. This is a locally trivial bundle of full right Hilbert $\MF$-modules, where $\MF$ acts by right multiplication on itself. The bundle of compact adjointable right $\MF$-linear operators on $\cE_+$ agrees with
\[
	\Endo{F(E)} \otimes \MF\ ,
\]
which we can identify with $\MF$ using $\varphi_+$ to define a left $\MF$-module structure on the fibres of $\cE_+$ given by $a \cdot (\xi \otimes b) := \varphi_+^{-1}(a)(\xi \otimes b)$. To turn $\cE_{0,+}$ into a saturated half-bundle we need to equip it with a bilinear and associative multiplication $\mu$. On $\cE_+$ we define $\mu$ by the following diagram:
\[
	\begin{tikzcd}[column sep=0.1cm]
		(F(E_{g,z_1,z_2}) \otimes \MF) \otimes_{\MF} (F(E_{g,z_2,z_3}) \otimes \MF) \ar[r,"\mu"] \ar[d, "\kappa" left, "\cong" right] & F(E_{g,z_1,z_3}) \otimes \MF \\
		F(E_{g,z_1,z_2}) \otimes F(E_{g,z_2,z_3}) \otimes \MF \ar[r,"\tau \otimes \id{}" below, "\cong" above] & F(E_{g,z_1,z_2} \oplus E_{g,z_2,z_3}) \otimes \MF \ar[u,equal]
	\end{tikzcd}
\]
where the map $\kappa$ is given by
\(
	\kappa((\xi \otimes a) \otimes (\eta \otimes b)) = \xi \otimes a\cdot (\eta \otimes b)\ .
\)
This is an isomorphism with inverse $\xi \otimes \eta \otimes a \mapsto (\xi \otimes 1) \otimes (\eta \otimes a)$. Let 
\[
	\ell_{z_i,z_j} \colon \MF \otimes F(E_{z_i,z_j}) \otimes \MF \to F(E_{z_i,z_j}) \otimes \MF
\] 
be defined by left multiplication. The associativity condition in Lemma~\ref{lem:associative_isos_MF} implies that the following diagram commutes: 

\[
	\begin{tikzcd}[column sep=1.5cm]
		\MF \otimes F(E_{z_1,z_2}) \otimes \MF \otimes F(E_{z_2,z_3}) \otimes \MF \ar[r,"\circled{1}"] \ar[d,"\ell_{z_1,z_2} \otimes \id{}" left] & \MF \otimes F(E_{z_1,z_3}) \otimes \MF \ar[d,"\ell_{z_1,z_3}"] \\
		F(E_{z_1,z_2}) \otimes \MF \otimes F(E_{z_2,z_3}) \otimes \MF \ar[r,"(\tau \otimes \id{}) \circ (\id{}\! \otimes \ell_{z_2,z_3})" below] & F(E_{z_1,z_3}) \otimes \MF
	\end{tikzcd}
\]
where we eliminated the group element $g$ from the notation for clarity, and where the map~$\circled{1}$ is given by $\id{\MF} \otimes ((\tau \otimes \id{\MF}) \circ (\id{F(E_{z_1,z_2})} \otimes \,\ell_{z_2,z_3}))$. As a consequence we obtain that the multiplication $\mu$ is associative on $\cE_+$. 

On $\cE_0$ we define the multiplication by the composition of the groupoid elements and the multiplication in $\MF$, which is clearly bilinear and associative.  If $(g,z_1,z_2) \in \cG_+$ and $(g,z_2,z_3) \in \cG_0$, then by definition $E(g,z_1,z_2)$ and $E(g,z_1,z_3)$ agree. This identification and the right multiplication by $\MF$ defines the multiplication on elements from the set 
\[
	\{ (e_1,e_2) \in \cE_{0,+}^{2}\ |\ \pi(e_1) \in \cG_+, \pi(e_2) \in \cG_0, (\pi(e_1),\pi(e_2)) \in \cG^{(2)} \}\ .
\]
Using the left multiplication by $\MF$ we can also extend $\mu$ over 
\[
	\{ (e_1,e_2) \in \cE_{0,+}^{2}\ |\ \pi(e_1) \in \cG_0, \pi(e_2) \in \cG_+, (\pi(e_1),\pi(e_2)) \in \cG^{(2)} \}\ .
\]
in an analogous way. The resulting multiplication map is still associative. The fibrewise inner products on the right Hilbert $A$-modules yield a global continuous inner product, i.e.\ for the elements $\xi \otimes a,\ \eta \otimes b \in F(E(g,z_1,z_2)) \otimes \MF$ we define
\[	
	\rscal{\xi \otimes a}{\eta \otimes b}{A} = \left(\rscal{\xi}{\eta}{\C}\,a^*b, (g,z_2)\right)
\]
This ensures that all of the properties in Def.~\ref{def:sat_half_bundle} b) hold. The multiplication also satisfies Def.~\ref{def:sat_half_bundle} c) by construction. Thus, we have proven:
 
\begin{lemma} \label{lem:cE_is_a_half_bundle}
	The triple $(\cE_{0,+}, \mu, \rscal{\,\cdot\,}{\,\cdot\,}{A})$ constructed above is a saturated half-bundle.  
\end{lemma}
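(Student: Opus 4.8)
The construction of $\cE_{0,+}$, the multiplication $\mu$ and the inner product $\rscal{\,\cdot\,}{\,\cdot\,}{A}$ having been carried out above (with $A=\MF$), the plan is simply to verify the three conditions a), b), c) of Definition~\ref{def:sat_half_bundle} in turn, after recording two structural facts. First, the subsets $\cG_0$ and $\cG_+$ are disjoint, open and closed in $\cG_{0,+}$: a small perturbation of $(g,z_1,z_2)$ moves the eigenvalues of $g$ only slightly while keeping the $z_i$ away from $\EV{g}$, so each of $Y^{[2]}_0$, $Y^{[2]}_+$, $Y^{[2]}_-$ is open, and these three sets are disjoint and cover $Y^{[2]}$. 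Hence $\cE_{0,+}$ is the disjoint union of the bundles $\cE_0\to\cG_0$ and $\cE_+\to\cG_+$, and there is nothing to glue. Second, $\cE_+=F(E)\otimes\MF$ is a locally trivial Banach bundle because $E\to\cG_+$ is a locally trivial vector bundle (as in \cite[Sec.~3]{MurrayStevenson-basic_gerbe:2008}) and $F$ is continuous, while $\cE_0=\cG_0\times\MF$ is trivial; in particular $\left.\cE_{0,+}\right|_{\cG_0}=\cG_0\times\MF$.

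For condition a), bilinearity of $\mu$ on each type of composable pair is visible from the defining diagrams, since $\kappa$, $\tau\otimes\id{}$ and left/right multiplication by $\MF$ are bilinear (resp.\ $\MF$-balanced); that $\mu$ covers the groupoid multiplication rests on the identity $E(g,z_1,z_2)\oplus E(g,z_2,z_3)=E(g,z_1,z_3)$ for composable pairs in $\cG_+$ and on the analogous identification when one factor lies over $\cG_0$. Associativity on $\cE_+$ is precisely the commuting square built from the maps $\ell_{z_i,z_j}$ and $\tau\otimes\id{}$, which commutes by the associativity statement of Lemma~\ref{lem:associative_isos_MF}; for the remaining configurations, involving at least one $\cG_0$-factor, associativity reduces to associativity of multiplication in $\MF$ together with the evident compatibilities, and $\mu$ extends the canonical multiplication on $\cE_0$ by construction. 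Continuity of $\mu$ is the point I expect to demand the most care: on $\cE_+$ it follows by unwinding the diagram and using that $\varphi_+$ (hence each $\varphi_{g,z_1,z_2}$) is a continuous bundle isomorphism by Corollary~\ref{cor:phi_plus}, that $\tau$ induces a continuous bundle map by continuity of $F$, and that $\kappa$ and $\kappa^{-1}$ are continuous; on the mixed pieces continuity is inherited from the continuous left and right multiplications by $\MF$ on the locally trivial bundle $\cE_+$.

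For condition b), fix $(g,z_1,z_2)\in\cG_+$ and regard the fibre $F(E(g,z_1,z_2))\otimes\MF$ as the external tensor product of the Hilbert space $F(E(g,z_1,z_2))$ with the Hilbert $\MF$-module $\MF$; then it is a full right Hilbert $\MF$-module, and the inner product $\rscal{\xi\otimes a}{\eta\otimes b}{\MF}=\bigl(\rscal{\xi}{\eta}{\C}\,a^*b,(g,z_2)\bigr)$ satisfies $\lVert\rscal{e}{e}{\MF}\rVert=\lVert e\rVert^2$, restricts over $\cG_0$ to $(a_1^*a_2,s(\g))$, and is continuous and right $\MF$-linear by its formula and the local triviality of the bundle. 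The compact adjointable right $\MF$-linear operators on this fibre are $\Endo{F(E(g,z_1,z_2))}\otimes\MF=\MFa(g,z_1,z_2)\otimes\MF$, so the left multiplication $a\cdot(\xi\otimes b)=\varphi_+^{-1}(a)(\xi\otimes b)$ is compact adjointable with adjoint $a^*\cdot(-)$ (since $\varphi_+$ is a $*$-isomorphism) and realises the $*$-isomorphism $\psi_{\MF,(g,z_1,z_2)}=\varphi_{g,z_1,z_2}^{-1}$; over $\cG_0$ the fibre is $\MF$ viewed as an $\MF$-$\MF$ equivalence bimodule. Thus every fibre is an $\MF$-$\MF$ equivalence bimodule, as required.

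Finally, for condition c), on $\cG_+$ the map $\mu$ was defined as the composite of the isomorphisms $\kappa$ and $\tau\otimes\id{}$, hence is an isomorphism $(\cE_{0,+})_{\g_1}\otimes_{\MF}(\cE_{0,+})_{\g_2}\to(\cE_{0,+})_{\g_1\g_2}$ of right Hilbert $\MF$-modules; that it is also left $\MF$-linear, and so an $\MF$-$\MF$-bimodule isomorphism, is again the commuting square involving $\ell_{z_i,z_j}$ from Lemma~\ref{lem:associative_isos_MF}. For composable pairs with a $\cG_0$-factor the relevant map is the identification $E(g,z_1,z_2)=E(g,z_1,z_3)$ tensored with multiplication in $\MF$, which is manifestly a bimodule isomorphism. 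Assembling a), b) and c) gives the lemma. The only genuine obstacle is the continuity bookkeeping in parts a) and b), and it is dispatched by Corollary~\ref{cor:phi_plus} together with continuity of $F$; everything else is a direct translation of the construction and of Lemma~\ref{lem:associative_isos_MF} into the axioms of Definition~\ref{def:sat_half_bundle}.
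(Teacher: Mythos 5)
Your proposal is correct and follows essentially the same path as the paper, whose proof is implicit in the construction preceding the lemma (it is announced by ``Thus, we have proven:''): both rely on the fact that $\mu$ on $\cE_+$ is defined through the isomorphisms $\kappa$ and $\tau\otimes\id{}$, on the associativity supplied by Lemma~\ref{lem:associative_isos_MF}, on the continuity of $\varphi_+$ from Corollary~\ref{cor:phi_plus}, and on the explicit formula for the inner product to verify Definition~\ref{def:sat_half_bundle}~a)--c). Your write-up merely makes the axiom-by-axiom bookkeeping (openness of the pieces, the identification of compact operators on the fibre, continuity of the building blocks) explicit where the paper leaves it to the reader.
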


\begin{remark}
	Note that $\Endo{F(\C^n)} = F(\C^n) \otimes F(\C^n)^* \cong F(\C^n \oplus (\C^n)^*)$ and this isomorphism is natural. As a module over itself the algebra $\MF$ morally turns out to be
	\[
		\MF \cong F( \C^n \oplus (\C^n)^* \oplus \C^n \oplus (\C^n)^* \oplus \dots )\ ,
	\] 
	where the left action is on the $\C^n$-summands and the right action on their dual spaces. Likewise, we have
	\[
		\cE_{(g,z_1,z_2)} \cong F( E(g,z_1,z_2) \oplus \C^n \oplus (\C^n)^* \oplus \C^n \oplus (\C^n)^* \oplus \dots )\ .
	\]
\end{remark}

\subsection{The group action on $\cE_{0,+}$} \label{subsec:groupaction} 
The group $G = SU(n)$ acts on $\cG = Y^{[2]}$ by conjugation, i.e.\ for $h \in G$ and $(g,z_1,z_2) \in \cG$ we define
\[
	h\cdot (g,z_1,z_2) = (hgh^{-1}, z_1, z_2)\ .
\]
Observe that conjugation is a group automorphism and does not change the set of eigenvalues. Therefore this action is admissible in the sense of Def.~\ref{def:admissible_action}. Let $(g,z_1,z_2) \in \cG_{0,+}$. Any element $h \in G$ defines an isomorphism 
\[
	h \colon E(g,z_1,z_2) \to E(hgh^{-1},z_1,z_2) \quad , \quad \xi \mapsto h\xi\ ,
\]
where $h$ acts on $\Eig{g}{\lambda} \subset \C^n$ using the standard representation of $SU(n)$. The exponential functor $F$ turns this into a unitary isomorphism
\[
	F(h) \colon F(E(g,z_1,z_2)) \to F(E(hgh^{-1},z_1,z_2))\ .
\] 
The naturality of the structure isomorphism $\tau$ of $F$ ensures that the following diagram commutes:
\[
\begin{tikzcd}
	F(E(g,z_1,z_2)) \otimes F(E(g,z_2,z_3)) \ar[d,"F(h) \otimes F(h)" left] \ar[r,"\tau"] & F(E(g,z_1,z_3)) \ar[d,"F(h)"] \\
	F(E(hgh^{-1},z_1,z_2)) \otimes F(E(hgh^{-1},z_2,z_3)) \ar[r,"\tau" below] & F(E(hgh^{-1},z_1,z_3))
\end{tikzcd}
\]
Similarly, $G$ acts by conjugation on $\MFa$ and therefore also on the infinite tensor product $\MF$. Denote this action by $\alpha \colon G \to 
\Aut{\MF}$. This turns $\MF$ into a $G$-algebra. Combining $F(h)$ and $\alpha$ we obtain isomorphisms of $A$-$A$ bimodules 
\begin{equation} \label{eqn:G-action_on_fibres}
	F(E(g,z_1,z_2)) \otimes \MF \to F(E(hgh^{-1},z_1,z_2)) \otimes \MF
\end{equation}
inducing a continuous action of $G$ on $\cE_{0,+}$ covering the action of $G$ on $\cG_{0,+}$. 

\begin{lemma} \label{lem:associative_isos_equiv}
Let $E \to \cG_{+}$ be the vector bundle with fibre $E(g,z_1,z_2)$ over $(g,z_1,z_2) \in \cG_+$. The isomorphism $\varphi_+ \colon \Endo{F(E)} \otimes \MF \to \cG_+ \times \MF$ constructed in Cor.~\ref{cor:phi_plus} is $G$-equivariant (where $h \in G$ acts on $\Endo{F(E)}$ via $\mathrm{Ad}_{F(h)}$ and on $\MF$ via $\alpha_h$). In particular, \eqref{eqn:G-action_on_fibres} is an isomorphism of bimodules and $\mu$ from Lem.~\ref{lem:cE_is_a_half_bundle} is $G$-equivariant.
\end{lemma}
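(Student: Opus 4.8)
The plan is to track $G$-equivariance through every layer of the construction of $\varphi_+$ in Cor.~\ref{cor:phi_plus} and Lem.~\ref{lem:associative_isos_MF}, the only substantive ingredient being the naturality of the structure isomorphism $\tau$ of $F$. Fix $h \in G$ and $(g,z_1,z_2) \in \cG_+$. Since conjugation by $h$ carries the eigenspace decomposition \eqref{eqn:FCn_decomp} of $g$ into the corresponding decomposition of $hgh^{-1}$, the unitary $F(h)$ restricts to unitary isomorphisms between the tensor factors $F(\Eig{g}{1})$, $F(E^{\prec}(g,z_1))$, $F(E(g,z_1,z_2))$, $F(E^{\succ}(g,z_2))$ and their $hgh^{-1}$-counterparts. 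Naturality of $\tau$ applied to these restrictions shows that the isomorphism $\tau$ of \eqref{eqn:tau} intertwines $\mathrm{Ad}_{F(h)}$ on $\MFa$ with the tensor product of the four conjugation actions on $\MFa(g,1) \otimes \MFa^{\prec}(g,z_1) \otimes \MFa(g,z_1,z_2) \otimes \MFa^{\succ}(g,z_2)$, and likewise for all tensor powers $\tau^{\otimes k}$.

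First I would check that the shuffle maps $\alpha^{(k)}_{g,z_1,z_2}$ and $\beta^{(k)}_{g,z_1,z_2}$ of Lem.~\ref{lem:associative_isos_MF} are equivariant. On both their source and target, $h$ acts by applying $\mathrm{Ad}$ of the appropriate restriction of $F(h)$ to each tensor leg according to the type of that leg; the shuffle only permutes legs (preserving their types) and inserts units into the $\MFa^{1,\prec}$- and $\MFa^{\succ}$-slots, which are fixed by any $\mathrm{Ad}$-action; and the combinatorial pattern does not depend on $g$. Hence $\alpha^{(k)}_{hgh^{-1},z_1,z_2}$ is conjugate to $\alpha^{(k)}_{g,z_1,z_2}$ under the $h$-action, and similarly for $\beta^{(k)}$. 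Combining this with the naturality of $\tau$, each $\varphi^{(k)}_{g,z_1,z_2} = (\tau^{-1})^{\otimes(k+1)} \circ \alpha^{(k)}_{g,z_1,z_2} \circ (\id{} \otimes \tau^{\otimes k})$ intertwines the $h$-action on $\MFa(g,z_1,z_2) \otimes \MFa^{\otimes k}$ with the one on $\MFa^{\otimes(k+1)}$; passing to the colimit gives that $\varphi_{g,z_1,z_2}$ is $G$-equivariant. Since these maps assemble continuously over $\cG_+$ by Cor.~\ref{cor:phi_plus} and the $G$-action on $\cG_+$ is by conjugation, $\varphi_+$ is $G$-equivariant.

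The two consequences then follow formally. The left $\MF$-action on a fibre of $\cE_+$ is $a \cdot (\xi \otimes b) = \varphi_+^{-1}(a)(\xi \otimes b)$; since $\varphi_+$ is equivariant — with $h$ acting by $\mathrm{Ad}_{F(h)} \otimes \alpha_h$ on $\Endo{F(E)} \otimes \MF$ and by $F(h) \otimes \alpha_h$ on $\cE_+$ — one gets $\varphi_+^{-1}(\alpha_h(a)) = (\mathrm{Ad}_{F(h)} \otimes \alpha_h)(\varphi_+^{-1}(a))$, whence $h \cdot (a \cdot e) = \alpha_h(a) \cdot (h \cdot e)$; so \eqref{eqn:G-action_on_fibres} intertwines the left $\MF$-actions, and it visibly intertwines the right $\MF$-actions and the inner products (as $\alpha_h$ is used on the $\MF$-factor and $F(h)$ is unitary), hence is an isomorphism of $A$-$A$ bimodules. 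For $\mu$: on $\cE_+$ it is the composite of $\kappa$ (built from the now-equivariant left multiplication), the map $\tau \otimes \id{}$ (equivariant by naturality of $\tau$) and an identity, hence equivariant; on $\cE_0 = \cG_0 \times \MF$ it is groupoid composition together with the multiplication of $\MF$, and the $G$-action there is the diagonal conjugation action, so equivariance is immediate; on the mixed parts $\cG_+ \cdot \cG_0$ and $\cG_0 \cdot \cG_+$ it is given by the identification $E(g,z_1,z_2) = E(g,z_1,z_3)$ together with left or right multiplication by $\MF$, all of which are equivariant.

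I expect the only real difficulty to be bookkeeping: keeping straight which restriction of $F(h)$ acts on which tensor leg as the legs get permuted and as one passes to the infinite tensor product, and verifying that the commuting squares of Lem.~\ref{lem:associative_isos_MF} stay commutative once every object is decorated with its $G$-action. No analytic input is needed beyond the continuity already established in Cor.~\ref{cor:phi_plus}.
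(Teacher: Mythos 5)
Your proposal is correct and follows essentially the same route as the paper: both reduce the claim to the equivariance of the eigenspace decomposition, the naturality of $\tau$, and the observation that the shuffle/shift map commutes with the $G$-action because $G$ acts identically on every tensor leg of $\MF(E)$. You unpack this at the level of the finite-stage maps $\alpha^{(k)}$, $\varphi^{(k)}$ and then pass to the colimit, and you spell out the ``in particular'' consequences in more detail, but the underlying argument and ingredients are the same as the paper's.
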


\begin{proof}
	We use the notation introduced in Cor.~\ref{cor:phi_plus}. The action of $h \in SU(n)$ maps the eigenspace $\Eig{g}{\lambda}$ unitarily onto $\Eig{hgh^{-1}}{\lambda}$. This induces the given action of $G$ on $E$ and another unitary action of $G$ on $E^\perp$ in such a way that $\cG_+ \times \C^n = E \oplus E^\perp$ is an equivariant direct sum decomposition. With respect to the induced actions on $\MF(E)$ and $\MF(E^\perp)$ the isomorphism $\tau \colon \cG_+ \times \MF \to \MF(E) \otimes \MF(E^\perp)$ from Cor.~\ref{cor:phi_plus} is $G$-equivariant. Since $G$ acts in the same way on each tensor factor of the infinite tensor product $\MF(E)$ the shift isomorphism $\alpha^{\infty}$ from Cor.~\ref{cor:phi_plus} is equivariant as well. But these are the building blocks of $\varphi_+$. Thus, this implies the statement.
\end{proof}

Combining Thm.~\ref{thm:extension_of_Fell_bdls} and Cor.~\ref{cor:G-action_on_ext} we obtain the main result of this section:

\begin{corollary} \label{cor:Fell_bundle}
	The triple $(\cE_{0,+}, \mu, \rscal{\,\cdot\,}{\,\cdot\,}{A})$ together with the $G$-action defined above is a $G$-equivariant saturated half-bundle in the sense of Def.~\ref{def:eq_sat_half_bundle}. In particular, there is a saturated Fell bundle $\pi \colon \cE \to \cG$ with the properties 
	\begin{enumerate}[a)]
		\item $\left.\cE\right|_{\cG_{0,+}} = \cE_{0,+}$,
		\item $e_1^* \cdot e_2 = \rscal{e_1}{e_2}{A}$ for all $e_1,e_2\in \cE$ lying in the same fibre,
		\item the group $G = SU(n)$ acts continuously on $\cE$ such that $\pi \colon \cE \to \cG$ is equivariant and $g \cdot e^* = (g \cdot e)^*$.
	\end{enumerate}
	The Fell bundle $\cE$ is unique up to isomorphism.
\end{corollary}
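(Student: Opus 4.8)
The plan is to verify that the data assembled in Section~3.2 and Section~3.3 satisfies all the axioms in Def.~\ref{def:eq_sat_half_bundle} and then invoke the extension machinery. First I would recall that Lemma~\ref{lem:cE_is_a_half_bundle} already establishes that $(\cE_{0,+},\mu,\rscal{\,\cdot\,}{\,\cdot\,}{A})$ is a saturated half-bundle in the sense of Def.~\ref{def:sat_half_bundle}, so the only genuinely new content here is equivariance. For this I would check the two conditions in Def.~\ref{def:eq_sat_half_bundle}: condition a) is immediate, since on $\cE_0 = \cG_0 \times \MF$ the action was \emph{defined} to be the diagonal action of $G$ on $\cG_0$ (by conjugation) and on $\MF$ (by $\alpha$); condition b) splits into equivariance of $\mu$ and equivariance of the inner product. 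The latter holds because the $G$-action on the $F(E(g,z_1,z_2))$-factor is by the unitaries $F(h)$ — which preserve $\rscal{\,\cdot\,}{\,\cdot\,}{\C}$ — and on the $\MF$-factor is by the $*$-automorphisms $\alpha_h$, so $\rscal{h\cdot(\xi\otimes a)}{h\cdot(\eta\otimes b)}{A} = (\rscal{F(h)\xi}{F(h)\eta}{\C}\,\alpha_h(a)^*\alpha_h(b),\dots) = \alpha_h(\rscal{\xi\otimes a}{\eta\otimes b}{A})$ as required.

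The equivariance of $\mu$ is where the real work sits, and this is exactly the content of Lemma~\ref{lem:associative_isos_equiv}: $\varphi_+$ is $G$-equivariant, hence the left $\MF$-module structure $a\cdot(\xi\otimes b) = \varphi_+^{-1}(a)(\xi\otimes b)$ on the fibres intertwines the action, and hence — inspecting the diagram defining $\mu$ on $\cE_+$, whose ingredients are $\kappa$, $\tau\otimes\id{}$ and the identification of fibres — the multiplication $\mu$ commutes with the $G$-action. On $\cE_0$ this is clear since $\mu$ is just composition of groupoid arrows together with multiplication in $\MF$, both manifestly equivariant; on the mixed pieces (one factor in $\cG_0$, one in $\cG_+$) $\mu$ is built from the $\MF$-module structures, so equivariance again follows from Lemma~\ref{lem:associative_isos_equiv}. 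I would also note that the $G$-action preserves the decomposition $\cG = \cG_- \cup \cG_0 \cup \cG_+$, since conjugation does not change eigenvalues, so the action is admissible in the sense of Def.~\ref{def:admissible_action} — this was already observed in Sec.~3.3.

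Having checked that $(\cE_{0,+},\mu,\rscal{\,\cdot\,}{\,\cdot\,}{A})$ with its $G$-action is a $G$-equivariant saturated half-bundle, the existence and uniqueness of the saturated Fell bundle $\cE\to\cG$ with properties a) and b) follows directly from Thm.~\ref{thm:extension_of_Fell_bdls}, and the existence, equivariance and uniqueness of the extended $G$-action satisfying $g\cdot e^* = (g\cdot e)^*$ — property c) — follows from Cor.~\ref{cor:G-action_on_ext}. Uniqueness of the pair (Fell bundle, $G$-action) up to isomorphism is inherited from the uniqueness clauses in those two results.

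I expect the main obstacle to be purely expository rather than mathematical: most of the substance has been front-loaded into Lemma~\ref{lem:associative_isos_MF}, Cor.~\ref{cor:phi_plus}, Lemma~\ref{lem:cE_is_a_half_bundle} and Lemma~\ref{lem:associative_isos_equiv}, so the proof of Cor.~\ref{cor:Fell_bundle} is essentially a matter of assembling these pieces and checking the axioms of Def.~\ref{def:eq_sat_half_bundle} one by one. The only point requiring a little care is making sure that the mixed multiplications (between $\cG_0$- and $\cG_+$-indexed fibres) are genuinely covered by the equivariance statement in Lemma~\ref{lem:associative_isos_equiv}, since those were defined somewhat informally using the left and right $\MF$-actions; but since those actions are themselves equivariant by that lemma, no new argument is needed.
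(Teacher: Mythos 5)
Your proposal is correct and follows exactly the same route as the paper: the paper's own proof is the single sentence ``Combining Thm.~\ref{thm:extension_of_Fell_bdls} and Cor.~\ref{cor:G-action_on_ext} we obtain the main result of this section,'' relying tacitly on Lemma~\ref{lem:cE_is_a_half_bundle} and Lemma~\ref{lem:associative_isos_equiv} for the half-bundle structure and the equivariance of $\mu$. Your write-up usefully spells out the verification of Def.~\ref{def:eq_sat_half_bundle} a) and b) — including the short direct check that the inner product is equivariant via unitarity of $F(h)$ and the fact that $\alpha_h$ is a $*$-automorphism — but this is an expansion of, not a deviation from, the paper's argument.
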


\begin{remark}
Note that for $F = \extp^{\text{top}}$ the algebra $\MF$ agrees with $\C$ and $F(E)$ is the determinant line bundle of $E$. Moreover, the fibre~$(\cE_{-})_{(g,z_1,z_2)}$ can be identified with~$(\cE_{+}^*)_{(g,z_2,z_1)}$. Thus, our definition generalises the equivariant basic gerbe constructed in \cite{MurrayStevenson-basic_gerbe:2008}, which is based on \cite{Meinrenken-basicgerbe:2003,Mickelsson:2003}. 
\end{remark}

\section{The $C^*$-algebra associated to $\cE$}
In this section we will review the construction of the $C^*$-algebra $C^*(\cE)$ associated to the Fell bundle $\cE$. A priori there are several $C^*$-completions of the section algebra of $\cE$, but an amenability argument shows that all of them have to agree. We will also see that $C^*(\cE)$ is a continuous $C(G)$-algebra, which is stably isomorphic to a section algebra of a locally trivial bundle of $C^*$-algebras\footnote{The fact that we only get a stable isomorphism is in line with the classical case. The proof relies on a generalisation of the Fell condition, which only works after stabilisation, or equivalently is a statement up to Morita-Rieffel equivalence.}. As a consequence we obtain a Mayer-Vietoris sequence in equivariant $K$-theory. 

We start by reviewing the construction of the reduced $C^*$-algebra associated to the Fell bundle $\cE$. Let $A = C_0(Y, \MF)$ where $Y$ is as in Sec.~3.2. We can equip the space of compactly supported sections $C_c(Y^{[2]},\cE)$ with an $A$-valued inner product as follows:
\[
	\rscal{\sigma}{\tau}{A}(g,z) = \int_{\bT \setminus \{1\}} \sigma(g,w,z)^* \cdot \tau(g,w,z) dw\ ,  
\]
where $\sigma, \tau \in C_c(Y^{[2]},\cE)$, the dot denotes the Fell bundle multiplication and we used the Lebesgue measure on $\bT \setminus \{1\}$ with respect to which the subset $\EV{g} \cap (\bT \setminus \{1\})$ is of measure zero. The space $C_c(Y^{[2]},\cE)$ also carries a natural right $A$-action given for $a \in A$ and $\sigma \in C_c(Y^{[2]},\cE)$ by
\[
	(\sigma \cdot a)(g,z_1,z_2) = \sigma(g,z_1,z_2) \cdot a(g,z_2)\ .
\]
Denote by $L^2(\cE)$ the completion of $C_c(Y^{[2]},\cE)$ to a right Hilbert $A$-module with respect to the norm 
\[
	\lVert \sigma \rVert^2 = \sup_{(g,z) \in Y} \lVert\rscal{\sigma}{\sigma}{A}(g,z)\rVert\ .
\]
The space $C_c(Y^{[2]},\cE)$ can also be equipped with a convolution product, which assigns to $\sigma, \tau \in C_c(Y^{[2]},\cE)$ the section
\[
	(\sigma \ast \tau)(g,z_1,z_2) = \int_{\bT \setminus \{1\}} \sigma(g,z_1,w) \cdot \tau(g,w,z_2)\,dw
\]
Likewise, the $*$-operation on the Fell bundle induces an involution that maps $\sigma \in C_c(Y^{[2]},\cE)$ to 
\(
	\sigma^*(g,z_1,z_2) = \sigma(g,z_2,z_1)^*
\) and we have 
\[
	\rscal{\sigma \ast \tau_1}{\tau_2}{A} = \rscal{\tau_1}{\sigma^* \ast \tau_2}{A}\ ,
\]
i.e.\ convolution by $\sigma$ is an adjointable and hence bounded operator on $L^2(\cE)$. Let $\rbdd{A}{L^2(\cE)}$ be the adjointable right $A$-linear operators on the Hilbert $A$-module $L^2(\cE)$. By the above considerations we obtain a well-defined $*$-homomorphism 
\[
	C_c(Y^{[2]},\cE) \to \rbdd{A}{L^2(\cE)}\ .
\]
\begin{definition} \label{def:red_CStar_of_E}
We define $C^*_r(\cE)$ to be the $C^*$-algebra obtained as the norm-closure of $C_c(Y^{[2]},\cE)$ in $\rbdd{A}{L^2(\cE)}$. It is called the \emph{reduced $C^*$-algebra associated to the Fell bundle $\cE$}.  	
\end{definition}
Denote by $C_{\text{max}}^*(\cE)$ the maximal cross-sectional $C^*$-algebra of $\cE$. The grou\-poid $\cG = Y^{[2]}$ is equivalent in the sense of Renault to the trivial groupoid 
\[
\begin{tikzpicture}
	\node (G1) at (0,0) {$G$};
	\node (G2) at (1.5,0) {$G$};
	\draw[-latex] ([yshift=2]G1.east) -- ([yshift=2]G2.west) node[midway,above] {\scriptsize id};
	\draw[-latex] ([yshift=-2]G1.east) -- ([yshift=-2]G2.west)node[midway,below] {\scriptsize id};
\end{tikzpicture}
\]
which is (topologically) amenable. Since amenability is preserved by equivalence, the same is true for $Y^{[2]}$. Therefore \cite[Thm.~1]{SimsWilliams-Amenability:2013} implies that the reduced and the universal norm agree on $C_c(Y^{[2]},\cE)$ and thus $C_{\text{max}}^*(\cE) \cong C^*_r(\cE)$. Hence, we will drop the subscript from now on and write $C^*(\cE)$ for this $C^*$-algebra.

Observe that $C^*(\cE)$ carries a continuous $G$-action defined on sections $\sigma \in C_c(Y^{[2]},\cE)$ by
\[
	(g \cdot \sigma)(h,z_1,z_2) = g \cdot \sigma(g^{-1}hg, z_1,z_2)\ .
\]
It is also a $C(G)$-algebra in a natural way via the action that is defined on sections $\sigma \in C_c(Y^{[2]},\cE)$ with $f \in C(G)$ as follows 
\[
	(f\cdot \sigma)(g,z_1,z_2) = f(g)\sigma(g,z_1,z_2)\ .
\]
Note that this is indeed central and therefore provides a $*$-homomorphism $C(G) \to Z(M(C^*(\cE)))$
\begin{lemma} \label{lem:C(G)-algebra}
The multiplication by elements in $C(G)$ defined above turns $C^*(\cE)$ into a continuous $C(G)$-algebra. For $g \in G$ let $Y^{[2]}_g$ be the subgroupoid defined by
\[
	Y^{[2]}_g = \left(\pi^{-1}(g)\right)^{[2]} \ .
\]
and let $\cE_g = \left. \cE \right|_{Y^{[2]}_g}$. Then the fibre of $C^*(\cE)$ over $g$ is given by $C^*(\cE_g)$.
\end{lemma}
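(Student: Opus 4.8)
The plan is to establish the two assertions separately: first that $C^*(\cE)$ is a continuous $C(G)$-algebra, and second that the fibre over $g\in G$ is $C^*(\cE_g)$. For the first part, I would invoke the standard characterisation of continuous $C(G)$-algebras in terms of upper and lower semicontinuity of the norm function $g\mapsto \lVert\pi_g(a)\rVert$. Upper semicontinuity is automatic once we know $C^*(\cE)$ is a $C(G)$-algebra (which follows from the central $*$-homomorphism $C(G)\to Z(M(C^*(\cE)))$ noted just before the lemma), so the real content is lower semicontinuity, equivalently continuity. The cleanest route is to present $C^*(\cE)$ as the $C^*$-algebra of a Fell bundle over the groupoid $Y^{[2]}$ and to observe that the structure map $\pi\colon Y\to G$ makes $C_0(Y,\MF)$, and hence the whole bundle, fibred over $G$: the decomposition $Y=\bigsqcup_{g\in G}\pi^{-1}(g)$ is a continuous field of spaces over $G$, the Fell bundle $\cE$ restricts fibrewise to $\cE_g\to Y^{[2]}_g$, and $\cE_g$ depends continuously on $g$ because all the ingredients (the eigenspace bundles $E(g,z_1,z_2)$, the functor $F$ applied to them, the algebra $\MF$) vary continuously in $g$, as was already established in Section~3 (local triviality of $E\to\cG_+$, continuity of $\varphi_+$ in Cor.~\ref{cor:phi_plus}).

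For the identification of the fibres, I would argue as follows. The quotient $C^*(\cE)/(C_0(G\setminus\{g\})\cdot C^*(\cE))$ is by definition the fibre at $g$. On the level of the dense subalgebra $C_c(Y^{[2]},\cE)$, the ideal $C_0(G\setminus\{g\})\cdot C^*(\cE)$ consists (after completion) of sections vanishing on $Y^{[2]}_g$, so the quotient map is restriction of sections $\sigma\mapsto \sigma|_{Y^{[2]}_g}$. The image of this restriction map is dense in $C^*(\cE_g)$, and the point is that the quotient norm on $C_c(Y^{[2]},\cE)/\{\sigma:\sigma|_{Y^{[2]}_g}=0\}$ agrees with the norm of $C^*(\cE_g)$. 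Here I would use the amenability already invoked in the excerpt: $Y^{[2]}$ is equivalent to the trivial (amenable) groupoid $G\rightrightarrows G$, hence so is $Y^{[2]}_g$ (equivalent to the one-point groupoid), so the reduced and universal completions coincide for both $\cE$ and $\cE_g$. Working with the reduced picture, restriction of a section to the fibre $Y^{[2]}_g$ corresponds under the regular representations to compression to the Hilbert submodule built from $L^2$-sections supported over $\pi^{-1}(g)\subset Y$, and one checks this compression realises exactly the quotient, so the norms match.

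Concretely I would proceed in these steps. First, recall that $A=C_0(Y,\MF)$ is a $C_0(G)$-algebra via $\pi\colon Y\to G$, with fibre $A_g=C_0(\pi^{-1}(g),\MF)$, and that $L^2(\cE)$ is an $A$-module, hence carries a compatible $C_0(G)$-module structure; the central action of $C(G)$ on $C^*(\cE)\subseteq \rbdd{A}{L^2(\cE)}$ is the restriction of multiplication operators. Second, for fixed $g$ form the quotient module $L^2(\cE)_g = L^2(\cE)\otimes_{C_0(G)}\C_g$, identify it with $L^2(\cE_g)$ as a Hilbert $A_g$-module (both are completions of $C_c(Y^{[2]}_g,\cE_g)$ in the same inner product, since the defining integral and the Fell multiplication only involve data over $\pi^{-1}(g)$), and note the induced representation of $C_c(Y^{[2]},\cE)$ on $L^2(\cE)_g$ factors through restriction to $C_c(Y^{[2]}_g,\cE_g)$ followed by its regular representation. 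Third, deduce a surjective $*$-homomorphism $C^*(\cE)\to C^*(\cE_g)$ whose kernel is $C_0(G\setminus\{g\})\cdot C^*(\cE)$; injectivity of the induced map on the quotient follows because $C^*(\cE_g)=C^*_r(\cE_g)$ and the regular representation of $C^*_r(\cE_g)$ on $L^2(\cE_g)$ is faithful. Fourth, having identified all fibres, continuity of $g\mapsto\lVert\pi_g(a)\rVert$ follows from the continuous-field structure on the bundle $\cE\to\cG$ over $G$.

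I expect the main obstacle to be the second step: carefully matching the quotient Hilbert module $L^2(\cE)\otimes_{C_0(G)}\C_g$ with $L^2(\cE_g)$ and verifying that the representation genuinely factors through fibrewise restriction, rather than through some larger completion. One must be careful that the $L^2$-norm on $C_c(Y^{[2]},\cE)$ is a supremum over all of $Y$ (hence over all fibres $\pi^{-1}(g')$), so one needs an approximate-identity or bump-function argument to show that localising at $g$ really recovers the sup over $\pi^{-1}(g)$ alone and no contribution from nearby fibres survives; continuity of the field (i.e.\ the first part of the lemma) is exactly what makes this work, so the two halves of the lemma are genuinely intertwined and should be proved together.
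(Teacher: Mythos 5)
Your proposal diverges from the paper's proof in two ways, one a different but legitimate route and one a genuine gap.

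For the fibre identification, the paper simply observes that $G$ is the orbit space of the action of $Y^{[2]}$ on $Y$ and invokes \cite[Cor.~10]{SimsWilliams-Amenability:2013}, which delivers in one stroke that $C^*(\cE)$ is a $C(G)$-algebra with fibres $C^*(\cE_g)$. Your steps 1--3 (tensoring $L^2(\cE)$ with $\C_g$ over $C_0(G)$, matching the result with $L^2(\cE_g)$, and showing the regular representation factors through restriction) amount to re-deriving that special case from scratch. This is defensible and more self-contained, but it is strictly more work and you are right that step 2 (the identification $L^2(\cE)\otimes_{C_0(G)}\C_g \cong L^2(\cE_g)$ together with the factorisation of the representation) is the delicate point. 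If you want to avoid the citation, you would need to carry those details out.

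The genuine gap is step 4. You assert that continuity of $g\mapsto\lVert a_g\rVert$ ``follows from the continuous-field structure on the bundle $\cE\to\cG$ over $G$.'' This does not follow. The Fell bundle $\cE\to\cG$ being a continuous Banach bundle controls the fibrewise Banach norms $\lVert e\rVert$ for $e\in\cE_{(g,z_1,z_2)}$, but the $C^*$-norm on $C^*(\cE_g)$ is the operator norm of convolution on $L^2(\cE_g)$; it is not a pointwise supremum of Banach-bundle norms and there is no general principle that passes from continuity of the one to continuity of the other. For $C_0(X)$-algebras, upper semicontinuity of $x\mapsto\lVert a_x\rVert$ is automatic (as you note), but lower semicontinuity is exactly the nontrivial half, and the paper spends the bulk of its proof establishing it: one fixes a section $\sigma$ and a near-norming vector $\tau$ for $\sigma_g$, observes that $(h,z)\mapsto\lVert\rscal{\sigma\ast\tau}{\sigma\ast\tau}{A}(h,z)\rVert$ is continuous because the inner products take values in $A=C_0(Y,\MF)$, extends this function to $G\times\bT$, uses compactness of $\bT$ to locate a point $z_0$ where the supremum over $z$ is attained, and then uses continuity in $h$ at $(g,z_0)$ together with $\lVert\tau_h\rVert\le 1$ to bound $\lVert\sigma_h\rVert_{C^*(\cE_h)}$ from below for $h$ near $g$. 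You flag that ``an approximate-identity or bump-function argument'' is needed, but the remedy you sketch addresses the fibre identification, not the norm estimate; and the remark that ``the two halves are intertwined'' gestures at the difficulty without resolving it. As written, the continuity assertion is circular: you are using the conclusion (that $C^*(\cE)$ is a continuous field) to justify the continuity of the norms, which is precisely what a continuous field is.
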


\begin{proof}
We can identify $G$ with the orbit space of the action of $Y^{[2]}$ on $Y$. Thus, \cite[Cor.~10]{SimsWilliams-Amenability:2013} implies that $C^*(\cE)$ is a $C(G)$-algebra with fibres $C^*(\cE_g)$. The only statement left to show is that $g \mapsto \lVert a_g \rVert$ is lower semi-continuous for every $a \in C^*(\cE)$, where $a_g$ denotes the image of $a$ in $C^*(\cE_g)$. Without loss of generality we may assume that $a$ is a section $\sigma \in C_c(Y^{[2]},\cE)$. Let $g \in G$ and $\epsilon > 0$. Denote by $\tau_g \in L^2(\cE_g)$ the restriction of $\tau \in L^2(\cE)$. Note that 
\[
	\lVert \tau_g \rVert_{L^2(\cE_g)}^2 = \sup_{z \in \bT}\,\lVert \rscal{\tau}{\tau}{A}(g,z) \rVert\ .
\] 
Take $\tau \in L^2(\cE)$ with $\lVert \tau \rVert_{L^2(\cE)} = 1$, $\lVert \tau_g \rVert_{L^2(\cE_g)} = 1$ and 
\[
	\rVert (\sigma \ast \tau)_g \lVert_{L^2(\cE_g)} \geq \lVert \sigma_g \rVert_{C^*(\cE_g)} - \frac{\epsilon}{2}\ .
\] 
Since the inner product on $L^2(\cE)$ takes values in $A = C_0(Y, \MF)$, the function $f \colon Y \to \R$ given by 
\(
	f(h,z) = \lVert \rscal{\sigma \ast \tau}{\sigma \ast \tau}{A}(h,z) \rVert
\)
is continuous and extends to $G \times \bT$. Since $\bT$ is compact, there is $z_0 \in \bT$ with $(g,z_0) \in Y$ and $f(g,z_0) = \sup_{z \in \bT} f(g,z)$. By continuity of $h \mapsto f(h,z_0)$ there is an open neighbourhood $U$ of $g$ such that for all $h \in U$
\[
	\lVert \rscal{\sigma \ast \tau}{\sigma \ast \tau}{A}(h,z_0) \rVert^{\frac{1}{2}} \geq \lVert \rscal{\sigma \ast \tau}{\sigma \ast \tau}{A}(g,z_0) \rVert^{\frac{1}{2}} - \frac{\epsilon}{2} \geq  \lVert \sigma_g \rVert_{C^*(\cE_g)} - \epsilon\ .
\] 
But $\lVert \rscal{\sigma \ast \tau}{\sigma \ast \tau}{A}(h,z_0) \rVert^{\frac{1}{2}} \leq \lVert (\sigma \ast \tau)_h\rVert_{L^2(\cE_h)}$ and since $\lVert \tau_h \rVert_{L^2(\cE_h)} \leq 1$ we have
\[
\lVert \sigma_h \rVert_{C^*(\cE_h)} \geq \lVert \sigma_g \rVert_{C^*(\cE_g)} - \epsilon
\] 
for all $h \in U$, which shows that the map is lower semi-continuous.
\end{proof}

We are going to prove that $C^*(\cE)$ is stably isomorphic to the section algebra of a locally trivial bundle with fibre $\MF \otimes \bK$. The following lemma will provide a first step and shows that local sections of $\pi \colon Y \to G$ give rise to trivialisations via Morita equivalences. 

\begin{lemma} \label{lem:equiv_trivialisation}
	Let $\sigma \colon V \to Y$ be a continuous section of $\pi \colon Y \to G$ over a closed subset $V \subset G$. Let $Y_V = \pi^{-1}(V)$. Denote the corresponding restriction of $\cG$ (respectively $\cE$) by $\cG_V$ (respectively $\cE_V$). Let $p_{\bT} \colon Y \to \bT$ be the restriction of the projection map to $Y$, let $t = p_{\bT} \circ \sigma$ and let
	\[
		\iota \colon Y_V \to \cG_V \qquad, \qquad (g,z) \mapsto (g,z,t(g))
	\]
	The Banach bundle $\cC_V = \iota^*\cE_V$ gives rise to a Morita equivalence $\sfX_V$ between $C^*(\cE_V)$ and $C(V, \MF)$. If $V$ is $G$-invariant, then $\sfX_V$ is a $G$-equivariant Morita equivalence. 
\end{lemma}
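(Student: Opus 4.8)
The plan is to write down $\sfX_V$ as a completed space of sections of $\cC_V=\iota^*\cE_V$ and then to identify it with the imprimitivity bimodule attached to a full subset of the unit space of $\cG_V=Y_V^{[2]}$. Since $(g,z,t(g))$ is a morphism with range $(g,z)$ and source $(g,t(g))$, a section $\xi\in C_c(Y_V,\cC_V)$ has $\xi(g,z)\in\cE_{(g,z,t(g))}$, an $\MF$-$\MF$ equivalence bimodule. Using the Fell multiplications $\cE_{(g,z,w)}\cdot\cE_{(g,w,t(g))}\subseteq\cE_{(g,z,t(g))}$ and $\cE_{(g,t(g),w)}\cdot\cE_{(g,w,t(g))}\subseteq\cE_{(g,t(g),t(g))}=\MF$, I would equip $C_c(Y_V,\cC_V)$ with a left $C_c(\cG_V,\cE_V)$-action, a right $C(V,\MF)$-action, and two inner products by
\begin{gather*}
	(\sigma\cdot\xi)(g,z)=\int_{\bT\setminus\{1\}}\sigma(g,z,w)\cdot\xi(g,w)\,dw,\qquad (\xi\cdot f)(g,z)=\xi(g,z)\cdot f(g),\\
	\rscal{\xi}{\eta}{C(V,\MF)}(g)=\int_{\bT\setminus\{1\}}\xi(g,w)^{*}\cdot\eta(g,w)\,dw,\qquad \lscal{C^{*}(\cE_V)}{\xi}{\eta}(g,z_1,z_2)=\xi(g,z_1)\cdot\eta(g,z_2)^{*},
\end{gather*}
where the right $C(V,\MF)$-action uses the (trivial) right $\MF$-module structure on the fibres over $\cG_0$, and $\sfX_V$ is the completion of $C_c(Y_V,\cC_V)$ in the norm $\lVert\xi\rVert^{2}=\sup_{(g,z)\in Y_V}\lVert\rscal{\xi}{\xi}{C(V,\MF)}(g)\rVert$.

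Next I would verify the imprimitivity-bimodule axioms. Bilinearity and associativity of the two actions and the compatibility $\lscal{C^{*}(\cE_V)}{\xi}{\eta}\cdot\zeta=\xi\cdot\rscal{\eta}{\zeta}{C(V,\MF)}$ are immediate from associativity of the Fell multiplication and the identity $e_1^{*}\cdot e_2=\rscal{e_1}{e_2}{A}$ of Cor.~\ref{cor:Fell_bundle}; positivity and right $C(V,\MF)$-linearity of $\rscal{\,\cdot\,}{\,\cdot\,}{C(V,\MF)}$ follow by integrating the fibrewise statements for the right Hilbert $\MF$-modules $\cE_{(g,w,t(g))}$. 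For the two remaining analytic points — positivity of the $C^{*}(\cE_V)$-valued inner product and density in $C^{*}(\cE_V)$ of the span of its range — I would invoke the equivalence theorem for Fell bundles over groupoids applied to the subset $\sigma(V)=\{(g,t(g)):g\in V\}$ of the unit space $Y_V$: this subset is closed (the continuous image of the compact set $V$) and \emph{full}, since the orbits of $\cG_V$ on $Y_V$ are the fibres $\pi^{-1}(g)$ and each contains $(g,t(g))$. The orbit map $\pi\colon Y_V\to V$ is open, and since $\{(g,z_1,z_2):(g,z_i)\in\sigma(V)\}\subseteq\cG_0$ and $\cE|_{\cG_0}=\cG_0\times\MF$, the restriction $\cE_V|_{\sigma(V)}$ is the trivial Fell bundle $V\times\MF$ over the trivial groupoid $V$, whose cross-sectional algebra is $C(V,\MF)$. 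The equivalence theorem then produces a $C^{*}(\cE_V)$-$C(V,\MF)$ imprimitivity bimodule — here amenability of $Y^{[2]}$, already used above, guarantees that full and reduced completions coincide, so no choice is involved — and unravelling the construction identifies it with $\sfX_V$. (Alternatively one checks fullness of both inner products directly; density of $\lscal{C^{*}(\cE_V)}{\,\cdot\,}{\,\cdot\,}$ uses the saturation identity $\cE_{(g,z_1,t(g))}\cdot\cE_{(g,t(g),z_2)}=\cE_{(g,z_1,z_2)}$ together with a partition of unity, which is where continuity of the base point $t(g)$ is essential.)

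For the equivariant statement, $G$ acts on $C^{*}(\cE_V)$ by the conjugation action described above and on $C(V,\MF)$ by $(h\cdot f)(g)=\alpha_h(f(h^{-1}gh))$, and $Y_V$, $\cG_V$ and $\cE_V$ carry compatible $G$-actions (conjugation in the $G$-coordinate). The formula $(h\cdot\xi)(g,z)=h\cdot\xi(h^{-1}gh,z)$ would define a $G$-action on $\sfX_V$ intertwining all of the above structure \emph{provided} $h\cdot\xi(h^{-1}gh,z)$ lands in $\cE_{(g,z,t(g))}$, i.e.\ provided $t(h^{-1}gh)=t(g)$; equivalently, provided $\sigma$ is $G$-equivariant (a class function). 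This is the main difficulty I anticipate: an arbitrary section need not be a class function, and the Fell bundle over $V$ produced by the equivalence, although canonically $G$-equivariant and non-equivariantly isomorphic to $V\times\MF$ via $\sigma$, need not be \emph{$G$-equivariantly} trivial for general $V$. When $V$ is $G$-invariant one removes the obstruction by replacing $\sigma$ by a $G$-invariant section; for the decompositions used later (Section~5), where $V$ lies in a region on which the eigenvalues of every $g\in V$ avoid a fixed arc of $\bT$, one may simply take $t$ constant, which is manifestly a class function. With such a choice $\sigma(V)$ is $G$-invariant, the identification $\cE_V|_{\sigma(V)}\cong V\times\MF$ is $G$-equivariant for the diagonal action, and $G$-equivariance of the actions and inner products of $\sfX_V$ follows from $G$-equivariance of the Fell multiplication and of $\cE|_{\cG_0}=\cG_0\times\MF$.
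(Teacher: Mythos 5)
Your proof follows the same route as the paper's: regard $Y_V$ as a groupoid equivalence between $\cG_V$ and the trivial groupoid over $V$, check that $\cC_V$ is an equivalence of Fell bundles between $\cE_V$ and $V\times\MF$ in the sense of Muhly and Williams, and invoke their imprimitivity theorem (\cite[Thm.~6.4]{MuhlyWilliams-Disintegration:2008}); the explicit actions and inner products you write down are exactly the sesquilinear forms underlying that citation. Your caution about the equivariance clause is warranted and goes beyond what the paper records: the natural formula $(h\cdot\xi)(g,z)=h\cdot\xi(h^{-1}gh,z)$ takes values in $\cE_{(g,z,t(h^{-1}gh))}$, so $\cC_V=\iota^*\cE_V$ carries a $G$-action covering the one on $Y_V$ only when $t$ is a class function, a condition not implied by $G$-invariance of $V$ alone; equivalently, $\sigma(V)\subset\cG_0$ must be $G$-invariant for $\cE_V|_{\sigma(V)}\cong V\times\MF$ to be an identification of $G$-bundles. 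The paper leaves this hypothesis implicit, but it holds in every application of the lemma (the sections $\sigma_i$, $\sigma_I$ in Lemma~\ref{lem:differential}, Lemma~\ref{lem:diff_d0_d1} and Prop.~\ref{prop:module_structure} all have constant $t$), and your fallback of replacing $\sigma$ by a $G$-invariant section is the correct general fix, modulo the (routine) selection problem of showing such a section exists over a given $G$-invariant $V$.
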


\begin{proof}
We will prove the first part of the statement by showing that $\cC_V$ provides an equivalence of Fell bundles in the sense of \cite[Sec.~6]{MuhlyWilliams-Disintegration:2008} between $\cE_V$ and the $C^*$-algebra bundle $V \times \MF$ over $V$. 

First note that the space $Y_V$ is an equivalence between $\cG_V$ and the trivial groupoid over $V$, which we will also denote $V$ by a slight abuse of notation. Let $\beta \colon V \to \cG$ be given by $\beta(g) = (g, t(g), t(g))$. We can and will identify $V \times \MF$ with $\beta^*\cE_V$. The bundle $\kappa \colon \cC_V \to Y_V$ carries a left action of $\cE_V$ and a right action of $\beta^*\cE_V = V \times \MF \to V$ such that \cite[Def.~6.1~(a)]{MuhlyWilliams-Disintegration:2008} holds. The two sesquilinear forms
\begin{align*}
	\cC_V \times \cC_V \to \cE_V \quad &, \quad (c,d) \mapsto  \lscal{\cE_V}{c}{d} := c \cdot d^* \ ,\\
	\cC_V \times_{\kappa} \cC_V \to V \times \MF \quad &, \quad (c,d) \mapsto  \rscal{c}{d}{\beta^*\cE_V}  := c^* \cdot d
\end{align*}
satisfy the conditions listed in \cite[Def.~6.1~(b)]{MuhlyWilliams-Disintegration:2008}. Since $\cE_V \to \cG_V$ is a saturated Fell bundle, \cite[Def.~6.1~(c)]{MuhlyWilliams-Disintegration:2008} is also true for the bundle $\cC_V \to Y_V$. Therefore, by \cite[Thm.~6.4]{MuhlyWilliams-Disintegration:2008} the completion $\sfX_V$ of $C_c(Y_V,\cC_V)$ with respect to the norms induced by the above inner products is an imprimitivity bimodule between the $C^*$-algebras $C^*(\cE_V)$ and $C(V,\MF)$.

For the proof of the second part suppose that $V$ is $G$-invariant. Note that the adjoint action lifts to $Y_V$. Moreover, the action described in Sec.~\ref{subsec:groupaction} restricts to $C^*(\cE_V)$. Let $\alpha \colon G \to \Aut{\MF}$ be the action of $G$ on $\MF$ induced by $\Ad_{F(\rho)}$ where $\rho \colon G \to U(n)$ is the standard representation. Then $G$ acts on $C(V, \MF)$ via the adjoint action on $V$ and by $\alpha$ on $\MF$. It is straightforward to check on sections that these definitions turn $\sfX_V$ into an equivariant Morita equivalence.
\end{proof}

\begin{remark}
	By \cite[Lem.~9]{SimsWilliams-Amenability:2013} the $C(G)$-algebra structure is compatible with the Fell bundle restriction in the sense that restricting the sections to~$\cG_V$ induces a natural $*$-isomorphism $C^*(\cE)(V) \cong C^*(\cE_V)$. 
\end{remark}

\begin{definition} \label{def:Fell_condition}
	Let $X$ be a locally compact metrisable space. A continuous $C_0(X)$-algebra $A$ whose fibres are stably isomorphic to strongly self-absorbing $C^*$-algebras is said to satisfy the \emph{(global) generalised Fell condition} if for each $x \in X$ there exists a closed neighbourhood $V$ of $x$ and a projection $p \in A(V)$ such that $[p(v)] \in GL_1(K_0(A(v)))$ for all $v \in V$. 
\end{definition}

\begin{lemma} \label{lem:Fell_condition}
	The fibre algebra $C^*(\cE_g)$ is Morita equivalent to $\MF$ and the continuous $C(G)$-algebra $C^*(\cE)$ satisfies the generalised Fell condition.
\end{lemma}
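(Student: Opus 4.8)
The plan is to extract both assertions from Lemma~\ref{lem:equiv_trivialisation} by choosing suitable continuous local sections of $\pi\colon Y\to G$. For the first statement, fix $g\in G$; since $\EV{g}$ is finite we may pick $z_0\in Z$ with $z_0\notin\EV{g}$, so that $g\mapsto(g,z_0)$ is a continuous section of $\pi$ over the closed set $\{g\}$. With $V=\{g\}$ we have $\cE_V=\cE_g$ (the restriction whose section algebra is, by Lemma~\ref{lem:C(G)-algebra}, the fibre of $C^*(\cE)$ over $g$) and $C(V,\MF)=\MF$, so Lemma~\ref{lem:equiv_trivialisation} directly yields a Morita equivalence $C^*(\cE_g)\sim_{\mathrm{Morita}}\MF$.

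For the generalised Fell condition I would first upgrade this to a neighbourhood. Fix $g$ and $z_0\notin\EV{g}$ as above. The eigenvalues of a unitary depend continuously on it, so $\{h\in G\mid z_0\notin\EV{h}\}$ is open and contains $g$; choose a closed neighbourhood $V$ of $g$ inside it, so that $\sigma(h)=(h,z_0)$ is a continuous section of $\pi$ over $V$. Lemma~\ref{lem:equiv_trivialisation} then gives a Morita equivalence $\sfX_V$ between $C^*(\cE)(V)\cong C^*(\cE_V)$ and $C(V,\MF)$ that restricts, over each $h\in V$, to the equivalence $C^*(\cE_h)\sim\MF$ from the first part.

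Next I would turn this local Morita equivalence into the projection required in Def.~\ref{def:Fell_condition}. Because $\sfX_V$ respects the $C(V)$-module structures and $C^*(\cE)(V)$ is separable, it is implemented by a $C(V)$-linear stable isomorphism $C^*(\cE)(V)\otimes\bK\cong C(V,\MF\otimes\bK)$; moreover each fibre $C^*(\cE_g)$ is stable — it is the section algebra of the saturated Fell bundle $\cE_g$ over the pair groupoid $Y^{[2]}_g$ of the infinite space $\pi^{-1}(g)$ with coefficient algebra $\MF$, so $C^*(\cE_g)\cong\MF\otimes\bK$ — whence, by a continuity argument over the finite-dimensional base, $C^*(\cE)$ and its quotient $C^*(\cE)(V)$ are stable and one may work with $C^*(\cE)(V)\cong C(V,\MF\otimes\bK)$ directly. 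I would then let $p\in C^*(\cE)(V)$ be the constant section $v\mapsto 1_{\MF}\otimes e_{11}$, which is a projection; for $v\in V$ its image $p(v)\in C^*(\cE_v)$ has a class $[p(v)]$ that equals $[1_{\MF}]$ under the ring isomorphism $K_0(C^*(\cE_v))\cong K_0(\MF\otimes\bK)\cong K_0(\MF)$ induced by the Morita equivalence and the self-absorption $\MF\otimes\MF\cong\MF$. Since $\MF$ is a (possibly infinite) UHF algebra, $K_0(\MF)\cong\Z[\tfrac1{d(F)}]$ is a unital ring whose multiplicative identity is $[1_{\MF}]$; hence $[p(v)]\in GL_1(K_0(C^*(\cE_v)))$ for every $v\in V$, which is precisely the generalised Fell condition.

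Apart from the bookkeeping with Lemmas~\ref{lem:C(G)-algebra} and \ref{lem:equiv_trivialisation}, the only genuine content is (i) producing an \emph{honest} projection in $C^*(\cE)(V)$, not merely a class in $K_0$ — this is why I want to record that the fibres, and hence $C^*(\cE)$, are stable; alternatively one writes down a concrete cut-off section in $C_c(Y_V,\cC_V)$ serving as a unit vector for $\sfX_V$ — and (ii) matching the ring structure on the $K_0$ of a stabilised fibre with that of $K_0(\MF)$ coming from $\MF\otimes\MF\cong\MF$, in the sense of \cite{DadarlatP-DixmierDouady:2016}. I expect (i) to be the main obstacle.
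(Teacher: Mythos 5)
Your first part coincides with the paper's: take $V=\{g\}$ in Lemma~\ref{lem:equiv_trivialisation} and read off the Morita equivalence. For the generalised Fell condition the paper takes a more elementary and self-contained route than the one you propose. It chooses $U\ni g$ small enough that an entire interval $J\subset\bT\setminus\{1\}$ misses $\EV{h}$ for every $h\in U$; then $\left.\cE\right|_{U\times J^2}$ is literally the trivial bundle with fibre $\MF$, so extending sections by zero gives an inclusion of $C(G)$-algebras $C_0(U,\bK(L^2(J))\otimes\MF)\hookrightarrow C^*(\cE)$, and $p$ is simply the image of $1\otimes e$ for a rank-one $e\in\bK(L^2(J))$. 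Choosing the base point $z_0\in J$ for $\sfX_v$ and $e$ the projection onto a cut-off function in $C_c(J)$, one computes $p(v)\sfX_v\cong\MF$, the unit of $K_0(\MF)$. No stability theory enters. This is exactly the ``concrete cut-off section'' alternative you yourself flag at the end as the likely safer route.

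Your preferred plan would also work, but it leans on two assertions that you state without proof or citation and that are not immediate. First, $C^*(\cE_g)\cong\MF\otimes\bK$: this is true, but the ``so'' needs an argument --- e.g.\ decompose $\pi^{-1}(g)$ into its finitely many open intervals $I_1,\dots,I_m$, identify the equivalence bimodule $\sfX_g$ with $\bigoplus_j L^2(I_j)\otimes W_j$ where each $W_j$ is a finitely generated projective $\MF$-module, and invoke Kasparov stabilisation to conclude $\sfX_g$ is isomorphic to the standard Hilbert $\MF$-module, whence $C^*(\cE_g)\cong\rcpt{\MF}{\sfX_g}\cong\MF\otimes\bK$. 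Second, ``by a continuity argument over the finite-dimensional base, \dots\ $C^*(\cE)(V)$ is stable'' is not a routine continuity argument but a genuine theorem (in the circle of ideas of Hirshberg--R\o rdam--Winter on $C_0(X)$-algebras with stable fibres); R\o rdam's counterexample shows the conclusion \emph{fails} over infinite-dimensional bases, so this must be cited, not waved at. You would in addition need the $C(V)$-linear form of the Brown--Green--Rieffel theorem to convert the $C(V)$-linear Morita equivalence of Lemma~\ref{lem:equiv_trivialisation} into the $C(V)$-algebra isomorphism $C^*(\cE)(V)\cong C(V,\MF\otimes\bK)$. With these references supplied your argument closes, but it is markedly heavier than the paper's; you correctly anticipate that producing an honest projection is the crux, and the paper's construction over $J$ handles it directly.
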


\begin{proof}
Let $V = \{g\} \subset G$ and choose $z_0 \in \bT$ such that $(g,z_0) \in Y$. The first statement is now a consequence of Lemma~\ref{lem:equiv_trivialisation} for $V = \{g\} \subset G$ and $\sigma \colon \{g\} \to Y$ given by $\sigma(g) = (g,z_0)$. For any $g \in G$ let $\sfX_g$ be the resulting Morita equivalence between $C^*(\cE_g)$ and $\MF$. 

It remains to be proven that $C^*(\cE)$ satisfies the generalised Fell condition. Let $g \in G$ and choose an open neighbourhood $U$ of $g$ with the property that 
\[
	S = \{ z \in \bT \setminus \{1\}\ |\ z \notin \EV{h} \text{ for any } h \in U \} 
\]
contains an open interval $J \subset S$. Note that $U \times J^2 \subset Y^{[2]}$. Since there are no eigenvalues in between any two points of~$J$, the restriction of $\cE$ to this subspace is just the trivial bundle with fibre~$\MF$. Thus, extension of a section by $0$ produces an inclusion of convolution algebras
\[
	C_c(U \times J^2, \MF) \to C_c(Y^{[2]},\cE)
\]
and the completion of the left hand side in the representation on $L^2(\cE)$ is isomorphic to $C_0(U, \bK(L^2(J)) \otimes \MF)$. The resulting $*$-homomorphism 
\[
	C_0(U, \bK \otimes \MF) \to C^*(\cE)
\]
is an inclusion of $C(G)$-algebras. Pick a closed neighbourhood $V \subset U$ of~$g$. If we restrict both sides to $V$ we obtain $C(V, \bK \otimes \MF) \to C^*(\cE)(V)$. Let $e \in \bK$ be a rank $1$-projection and define $p \in C^*(\cE)(V)$ to be the image of $1_{C(V,\MF)} \otimes e$ with respect to this inclusion. Fix $v \in V$. The isomorphism 
\[
	K_0(C^*(\cE)(v)) \cong K_0(C^*(\cE_v)) \cong K_0(\MF)
\]
induced by the Morita equivalence $\sfX_v$ maps the $K$-theory element $[p(v)] \in K_0(C^*(\cE)(v))$ to the class of the right Hilbert $\MF$-module $p(v)\sfX_v$ in $K_0(\MF)$.  We can choose the value of $z_0 \in \bT$ used to define $\sfX_v$ such that $z_0 \in J$. Moreover, we can without loss of generality assume that $e \in \bK(L^2(J))$ is the projection onto the subspace spanned by a compactly supported function $f \in C_c(J) \subset L^2(J)$. Then we have 
\[
	p(v)\sfX_v \cong p(v) \overline{C_c(J, \MF)}^{\lVert\cdot\lVert_{L^2}} \cong eL^2(J) \otimes \MF \cong \MF\ ,
\] 
which represents the unit in $K_0(\MF)$ and is therefore invertible.
\end{proof}

\begin{corollary} \label{cor:CStarBundle}
The continuous $C(G)$-algebra $C^*(\cE)$ is stably isomorphic to the section algebra of a locally trivial bundle $\cA \to G$ of $C^*$-algebras with fibre $\MF \otimes \bK$. In particular, it is classified by a continuous map 
\[
	G \to B\Aut{\MF \otimes \bK} \simeq BGL_1\left(KU\left[d_F^{-1}\right]\right) 
\] 
where $d_F = \dim(F(\C))$. 
\end{corollary}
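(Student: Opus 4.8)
The plan is to deduce the statement directly from the Dixmier--Douady-type classification theorem for continuous $C_0(X)$-algebras with strongly self-absorbing fibres proved by Dadarlat and the second author in \cite{DadarlatP-DixmierDouady:2016}. Recall first that $\MF$ is an infinite UHF-algebra, and in fact $\MF \cong M_{d^\infty}$ with $d = d_F = \dim(F(\C))$ by the exponential property of $F$ (so that $K_0(\MF) \cong \Z[d^{-1}]$); in particular $\MF$ is strongly self-absorbing and satisfies the UCT.

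First I would verify the hypotheses of the cited classification theorem. The base $G = SU(n)$ is compact and metrisable. The $C^*$-algebra $C^*(\cE)$ is separable, since $Y^{[2]}$ is second countable and the fibres of $\cE$ are separable, so that $C_c(Y^{[2]},\cE)$ is a separable normed space dense in $C^*(\cE)$. By Lemma~\ref{lem:C(G)-algebra} it is a \emph{continuous} $C(G)$-algebra, and by Lemma~\ref{lem:Fell_condition} each fibre $C^*(\cE_g)$ is Morita equivalent to the separable unital algebra $\MF$; by the Brown--Green--Rieffel stabilisation theorem this gives $C^*(\cE_g) \otimes \bK \cong \MF \otimes \bK$, so the fibres are stably isomorphic to the strongly self-absorbing algebra $\MF$. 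Finally, Lemma~\ref{lem:Fell_condition} also records that $C^*(\cE)$ satisfies the generalised Fell condition of Def.~\ref{def:Fell_condition}.

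With these hypotheses in place, the structure theorem of \cite{DadarlatP-DixmierDouady:2016} applies and shows that $C^*(\cE) \otimes \bK$ is isomorphic, as a $C(G)$-algebra, to the section algebra $C(G,\cA)$ of a locally trivial bundle $\cA \to G$ with fibre $\MF \otimes \bK$ and structure group $\Aut{\MF \otimes \bK}$ equipped with the point-norm topology. Over the paracompact base $G$ such bundles are classified up to isomorphism by the homotopy set $[G, B\Aut{\MF \otimes \bK}]$. To conclude I would invoke the computation of this classifying space from \cite{DadarlatP-DixmierDouady:2016}: since $\MF \cong M_{d^\infty}$ is a strongly self-absorbing UCT algebra with $K_0(\MF) \cong \Z[d^{-1}]$, there is a weak equivalence $B\Aut{\MF \otimes \bK} \simeq BGL_1\!\left(KU[d_F^{-1}]\right)$, which produces the desired classifying map $G \to BGL_1\!\left(KU[d_F^{-1}]\right)$.

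The conceptual work has essentially all been done already --- the generalised Fell condition established in Lemma~\ref{lem:Fell_condition} is precisely the input that makes the Dadarlat--Pennig machinery available --- so there is no genuine obstacle remaining. The only points requiring care are checking that the separability and metrisability hypotheses of the cited theorems are indeed met, and that the model fibre $\MF$ is correctly identified (up to equality of supernatural numbers) with $M_{d_F^\infty}$, so that it is the localisation away from $d_F$, and not some other integer, that appears in the classifying spectrum.
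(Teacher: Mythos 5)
Your proof is correct and follows essentially the same route as the paper: deduce the statement from Lemma~\ref{lem:Fell_condition} (fibres Morita equivalent to $\MF$ plus the generalised Fell condition) and then invoke the Dadarlat--Pennig classification theorem \cite[Cor.~4.9]{DadarlatP-DixmierDouady:2016}. Your extra checks of separability, metrisability, and the identification $\MF\cong M_{d_F^\infty}$ (using $\dim F(\C^n)=d_F^n$ so the supernatural numbers agree) are exactly the routine verifications the paper leaves implicit.
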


\begin{proof}
	By Lemma~\ref{lem:Fell_condition} the algebra $C^*(\cE)$ satisfies the generalised Fell condition and its fibres are Morita equivalent to the infinite UHF-algebra $\MF$. Therefore the statement follows from \cite[Cor.~4.9]{DadarlatP-DixmierDouady:2016}.
\end{proof}

\subsection{The spectral sequence}
For $G = SU(n)$ let $\ell = n-1$ be the rank of~$G$. Choose a maximal torus $\bT^{\ell}$ of $G$ with Lie algebra $\liet$. Let $\Lambda \subset \liet$ be the integral lattice with dual lattice $\Lambda^*$. Denote by $\rscal{\,\cdot\,}{\,\cdot\,}{\lieg}$ the basic inner product on $\lieg$. Choose a collection $\alpha_1, \dots, \alpha_{\ell} \in \Lambda^*$ of simple roots and let 
\[
	\liet_+ = \left\{ \xi \in \liet \ |\ \rscal{\alpha_j}{\xi}{\lieg} \geq 0 \ \forall j \in \{0, \dots, \ell\} \right\}
\]
be the corresponding positive Weyl chamber. Let $\Delta^{\ell}$ be the standard $\ell$-simplex defined as
\[
	\Delta^{\ell} = \left\{ (t_0, \dots, t_{\ell}) \in \R^{\ell+1} \ |\ \sum_{i=0}^\ell t_i = 1 \text{ and } t_j \geq 0\  \forall j \in \{0,\dots, \ell\} \right\}
\]
This simplex can be identified with the fundamental alcove of $G$, which is the subset cut out from $\liet_+$ by the additional inequality $\rscal{\alpha_0}{\xi}{\lieg} \geq -1$, where $\alpha_0$ is the lowest root. The alcove parametrises conjugacy classes of $G$ in the sense that each such class contains a unique element $\exp(\xi)$ with $\xi \in \Delta^\ell$. Denote the corresponding continuous quotient map by
\[
	q \colon G \to \Delta^\ell\ .
\] 
A sketch of the situation in the case $n = 3$ is shown in Fig.~\ref{fig:alcoveSU(3)}.

\begin{figure}[htp]
\centering
\begin{tikzpicture}[scale=1.8]
	\coordinate (0) at (0,0);
	\coordinate (alpha2) at (0,{sqrt(2)});
	\coordinate (alpha1) at ({sqrt(2)*sin(120)}, {sqrt(2)*cos(120)});
	\coordinate (alpha3) at ($(alpha1)+(alpha2)$);
	
	\coordinate (mu0) at ($1/3*(alpha1) + 2/3*(alpha2)$);
	\coordinate (mu1) at ($2/3*(alpha1) + 1/3*(alpha2)$);
	
	\draw[-latex] (0) -- (alpha2) node[midway,left] {$\alpha_2\!$};
	\draw[-latex] (0) -- (alpha1) node[midway,right] {$\alpha_1$};
	\draw[-latex] (0) -- ($-1*(alpha3)$) node[midway,right] {$\,\,\alpha_0$};
		
	\foreach \n in {-2,...,0} {
		\foreach \m in {0,...,2} {
			\draw[black!20,fill=black!20] ($-\n*(mu0) - \m*(mu1)$) circle (0.8pt);
			\draw[black!20,fill=black!20] ($\n*(mu0) + \m*(mu1)$) circle (0.8pt);
		}
	}

	\draw[blue!10,fill=blue!10] (0,0) -- ($(mu0)$) -- ($(mu1)$) -- cycle;
	\node at ($0.36*(mu0) + 0.36*(mu1)$) {$\Delta^2$};

	\draw[dashed,blue] ($(mu0)$) -- ($(mu1)$);
	\draw[dashed,blue] (0,0) -- ($2*(mu0)$);
	\draw[dashed,blue] (0,0) -- ($2*(mu1)$);

	\foreach \n in {0,...,2} {
		\pgfmathsetmacro{\k}{2-\n}
		\foreach \m in {0,...,\k} {
			\draw[blue,fill=blue] ($\n*(mu0) + \m*(mu1)$) circle (0.9pt);
		}
	}
\end{tikzpicture}
\caption{\label{fig:alcoveSU(3)}The root system of $G = SU(3)$ with positive roots $\alpha_1$ and $\alpha_2$ and lowest root $\alpha_0$. The blue dots mark the weights inside the Weyl chamber and the fundamental alcove $\Delta^2$ is shown in blue.}
\end{figure}

For a non-empty subset $I \subset \{0, \dots, \ell\}$ we let $\Delta_I \subset \Delta^{\ell}$ be the closed subsimplex spanned by the vertices in $I$. Let $\xi_I \in \lieg$ be the barycentre of $\Delta_I$ and let $G_I$ be the centraliser of $\exp(\xi_I)$. In fact, the subgroup $G_I$ does not depend on our choice of $\xi_I$ as long as it is an element in the interior of~$\Delta_I$. For $J \subset I$ we have $G_I \subset G_J$, which induces a $G$-map $G/G_I \to G/G_J$. Let 
\[
	\mathsf{G}_n = \coprod_{\lvert I \rvert = n+1} G/G_I\ .
\]
Denote the set $\{0, \dots, n\}$ by $[n]$. Let $f \colon [m] \to [n]$ be an order-preserving injective map. For each $I \subset \{0, \dots, \ell\}$ with $\lvert I \rvert = n+1$ there is a unique order-preserving identification $[n] \cong I$. Let $J \subset I$ be the subset corresponding to $f([m]) \subset [n]$ in this way. The above construction induces a continuous map
\(
	f^*_I \colon G/G_I \to G/G_J
\)
and those maps combine to 
\[
	f^* \colon \mathsf{G}_n \to \mathsf{G}_m\ .
\]
This turns $[n] \mapsto \mathsf{G}_n$ with $f \mapsto f^*$ into a contravariant functor. Therefore $\mathsf{G}_{\bullet}$ is a semi-simplicial space. The group $G$ can be identified with its geometric realisation, i.e. \ 
\[
	G \cong \lVert \mathsf{G}_{\bullet}\rVert = \left(\coprod_{I} G/G_I \times \Delta_I\right)/\!\!\sim
\]
where the equivalence relation identifies the faces of $\Delta_I$ using the maps $G/G_I \to G/G_J$ in the other component. The map $q \colon G \to \Delta^{\ell}$ is induced by the projection maps $G/G_I \times \Delta_I \to \Delta_I$ in this picture. Let
\[
	A_i = \left\{ (t_0, \dots, t_{\ell}) \in \Delta^{\ell}\ \left|\ \sum_{k \neq i} t_k \leq \delta_n \right.\right\} \subset \Delta^{\ell}\ ,
\]
where $0 < \delta_n < 1$ is chosen such that the closed sets $(A_i)_{i \in \{0,\dots,\ell\}}$ cover $\Delta^{\ell}$. Then $(V_i)_{i \in \{0,\dots,\ell\}}$ with $V_k = q^{-1}(A_k)$ is a cover of $G$ by closed sets. Note that each $V_k$ is $G$-homotopy equivalent to the open star of the $k$th vertex. For each non-empty subset $I \subset \{0,\dots, \ell\}$ let 
\[
	A_I = \bigcap_{i \in I} A_i \quad \text{and} \quad V_I = q^{-1}(A_I) = \bigcap_{i \in I} V_i\ .
\]
Note that the barycentre $\xi_I$ of $\Delta_I$ is contained in $A_I$. Therefore there is a canonical embedding $\iota_I \colon G/G_I \to V_I$. 

\begin{lemma} \label{lem:GmodGI}
	The embedding $\iota_I \colon G/G_I \to V_I$ defined above is a $G$-equi\-va\-riant deformation retract. 
\end{lemma}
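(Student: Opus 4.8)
The plan is to exhibit an explicit strong $G$-equivariant deformation retraction of $V_I$ onto $\iota_I(G/G_I)=q^{-1}(\xi_I)$, built from the straight-line homotopy towards the barycentre $\xi_I$ inside the alcove. We identify $\Delta^{\ell}$ with the fundamental alcove in $\liet$, so that $A_I$ becomes a subset of $\liet$. Since $\sum_k t_k=1$ on $\Delta^{\ell}$, the defining inequality of $A_i$ reads $t_i\ge 1-\delta_n$, so $A_I=\bigcap_{i\in I}A_i$ is convex (an intersection of half-spaces with the simplex), and it contains $\xi_I$, because the choice of $\delta_n$ forces $1-\delta_n\le\tfrac{1}{\ell+1}\le\tfrac{1}{|I|}=t_i(\xi_I)$ for every $i\in I$. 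For $\xi\in A_I$ and $t\in[0,1]$ set $h_t(\xi)=(1-t)\xi+t\,\xi_I$, which lies in $A_I$ by convexity, and define $H_t\colon V_I\to V_I$ by $H_t\bigl(k\exp(\xi)k^{-1}\bigr)=k\exp\bigl(h_t(\xi)\bigr)k^{-1}$ for $k\in G$ and $\xi\in A_I$; this does map into $V_I$, since the alcove representative of the right-hand side is $h_t(\xi)\in A_I$.

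First I would verify that $H_t$ is well defined. Let $J\subseteq\{0,\dots,\ell\}$ be the set of indices of the nonzero barycentric coordinates of $\xi$; then $I\subseteq J$, because $\xi\in A_I$ has $t_i\ge 1-\delta_n>0$ for $i\in I$, and $\xi$ lies in the relative interior of the face $\Delta_J$. By the standard structure theory of the alcove the centraliser of $\exp(\xi)$ depends only on the open face containing $\xi$, so, writing $Z_G(x)$ for the centraliser of $x$, we have $Z_G(\exp\xi)=G_J$. The set of nonzero barycentric coordinates of $h_t(\xi)$ equals $J$ for $t<1$ and equals $I$ for $t=1$, so $Z_G(\exp h_t(\xi))$ is either $G_J$ or $G_I$; since $I\subseteq J$ gives $G_J\subseteq G_I$, in every case $Z_G(\exp\xi)\subseteq Z_G\bigl(\exp h_t(\xi)\bigr)$. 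As the alcove representative $\xi$ of a point of $V_I$ is unique and the conjugating element $k$ is unique up to right multiplication by $Z_G(\exp\xi)$, this inclusion is exactly what makes the formula for $H_t$ independent of the chosen $k$.

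Next I would establish continuity of $(g,t)\mapsto H_t(g)$. The conjugation map $c\colon G\times A_I\to V_I$, $(k,\xi)\mapsto k\exp(\xi)k^{-1}$, is a continuous surjection from a compact space onto the Hausdorff space $V_I$, hence closed, hence a quotient map, and the same holds for $c\times\mathrm{id}_{[0,1]}$. The map $(k,\xi,t)\mapsto k\exp(h_t(\xi))k^{-1}$ is manifestly continuous and, by the computation above, constant on the fibres of $c\times\mathrm{id}_{[0,1]}$, so it descends to a continuous $H\colon V_I\times[0,1]\to V_I$. The boundary behaviour is then immediate: $H_0=\mathrm{id}_{V_I}$; $H_1\bigl(k\exp(\xi)k^{-1}\bigr)=k\exp(\xi_I)k^{-1}\in q^{-1}(\xi_I)=\iota_I(G/G_I)$, so $H_1$ is a retraction onto $\iota_I(G/G_I)$; and on $\iota_I(G/G_I)$ the alcove representative already equals $\xi_I$, so $H_t$ fixes that subspace for all $t$. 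Equivariance is clear, since conjugating $g$ by $h\in G$ replaces $k$ by $hk$ and leaves $\xi$ unchanged, whence $H_t(hgh^{-1})=h\,H_t(g)\,h^{-1}$. Thus $\iota_I$ is a $G$-equivariant strong deformation retract.

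The step I expect to be the main obstacle is making the homotopy simultaneously well defined and continuous: the conjugating element $k$ cannot be chosen continuously over $V_I$, so continuity must be routed through the quotient map $c$, while well-definedness hinges entirely on the centraliser inclusion $Z_G(\exp\xi)\subseteq Z_G(\exp h_t(\xi))$. That inclusion in turn rests on the precise interplay between the sets $A_I$, the faces of the alcove, and the assignment $\Delta_J\mapsto G_J$ of centralisers, which is where the choice of $\delta_n$ (forcing the support of $\xi$ to contain $I$ whenever $\xi\in A_I$) really enters. Everything else is formal.
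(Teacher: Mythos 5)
Your proof is correct and rests on the same geometric idea as the paper's: the straight-line contraction of the convex set $A_I$ onto the barycentre $\xi_I$, lifted to $V_I$. The difference is purely in how the lift is organised. The paper uses the semi-simplicial description $V_I = \left(\coprod_{I\subset K} G/G_K \times (\Delta_K\cap A_I)\right)/\!\!\sim$, first extracts the retraction $r_I$ from the face maps $G/G_K\to G/G_I$, and then lifts $H^A$ by noting that each $H^A_s$ preserves $A_I\cap\Delta_K$ for every $K\supset I$. You instead present $V_I$ as the image of the conjugation map $c\colon G\times A_I\to V_I$ and verify well-definedness of the lifted homotopy via the centraliser inclusion $Z_G(\exp\xi)\subseteq Z_G(\exp h_t(\xi))$, using that $c$ is a closed (hence quotient) surjection from a compact space onto a Hausdorff one. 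These two checks encode exactly the same combinatorial fact — the straight-line homotopy can only shrink, never enlarge, the supporting face of the alcove, so the isotropy only grows — so neither route is materially shorter or more general; yours is a bit more hands-on, the paper's a bit more structural (and reuses machinery already set up for the spectral sequence). One small point of care in your argument, which you handle correctly but is worth flagging: products of quotient maps are not quotient maps in general, so it matters that you invoke the stronger fact that $c$ and $c\times\mathrm{id}_{[0,1]}$ are \emph{closed} surjections from compact spaces to Hausdorff spaces.
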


\begin{proof}
Observe that $\Delta_{\{0,\dots,\ell\} \setminus \{i\}} \cap A_i = \emptyset$, which implies that $\Delta_K \cap A_j = \emptyset$ if $j \notin K$. Hence, the intersection $\Delta_K \cap A_I$ can only be non-empty if $I \subset K$. Therefore
\[
	V_I = \left(\coprod_{I \subset K} G/G_K \times (\Delta_K \cap A_I)\right)/\!\!\sim 
\]
and the quotient maps $G/G_K \to G/G_I$ induce a well-defined $G$-equivariant continuous map $r_I \colon V_I \to G/G_I$ with the property that $r_I \circ \iota_I = \id{G/G_I}$. Note that the set $A_I$ is convex and consider the contraction $H^A$ given by
	\[
		H^A \colon A_I \times [0,1] \to A_I \qquad, \qquad (\eta, s) \mapsto \xi_I + (1-s)(\eta - \xi_I) \ .
	\]
	Since $\xi_I \in \Delta_I$, each $H^A_s$ maps $A_I \cap \Delta_K$ to itself for all sets $K$ with $I \subset K$. Thus, we can lift $H^A$ to a $G$-equivariant continuous map
	\[
		H \colon V_I \times [0,1] \to V_I
	\] 
	which provides a homotopy between $\iota_I \circ r_I$ and $\id{V_I}$ that leaves $\iota_I(G/G_I)$ invariant.
\end{proof}

\begin{prop}\label{prop:spectral_seq}
	Let $\rho \colon G \to U(n)$ be the standard representation of $G$. For each non-empty subset $I \subset \{0,\dots, \ell\}$ let $\rho_I \colon G_I \to U(n)$ be the restriction of $\rho$ to $G_I$. There is a cohomological spectral sequence with $E_1$-page
	\[
		E_1^{p,q} = \bigoplus_{\lvert I \rvert = p+1} K^{G_I}_q(\MF) \cong
		\begin{cases}
			\bigoplus_{\lvert I \rvert = p+1} R(G_I)\left[F(\rho_I)^{-1}\right] & \text{for $q$ even}\ ,\\
			0 & \text{for $q$ odd}\ ,
		\end{cases}  
	\]
	where $R(H)$ denotes the representation ring of $H$. It converges to the associated graded of a filtration of $K^G_{*}(C^*(\cE))$.
\end{prop}

\begin{proof}
	The cover of $G$ by closed sets gives rise to a semi-simplicial space $\mathsf{V}_{\bullet}$ with 
	\[
		\mathsf{V}_n = \coprod_{\lvert I \rvert = n+1} V_I\ .
	\]
	Similar to the construction of $[n] \mapsto G_n$ we can turn $[n] \mapsto \mathsf{V}_n$ into a contravariant functor. Let $\cA \to G$ be the $C^*$-algebra bundle found in Cor.~\ref{cor:CStarBundle} and denote by $\cA_I \to V_I$ its restriction to $V_I$. The spaces $\cA_I$ can be assembled to form a simplicial bundle of $C^*$-algebras $\mathsf{A}_{\bullet} \to \mathsf{V}_{\bullet}$ with
	\[
		\mathsf{A}_n = \coprod_{\lvert I \rvert = n+1} \cA_I\ .
	\]
	Replacing each $V_I$ by $G$ and each $\cA_I$ by $\cA$ in the definitions of $\mathsf{V}_{\bullet}$ and $\mathsf{A}_{\bullet}$ we also obtain two `constant' semi-simplicial spaces $\mathsf{A}^c_{\bullet}$ and $\mathsf{G}^c_{\bullet}$, respectively. Their geometric realisations are $\lVert \mathsf{G}^c_{\bullet} \rVert \cong G \times \Delta^{\ell}$ and $\lVert \mathsf{A}^c_{\bullet} \rVert \cong \cA \times \Delta^{\ell}$. The canonical morphisms $\mathsf{A}_{\bullet} \to \mathsf{A}^c_{\bullet}$ and $\mathsf{V}_{\bullet} \to \mathsf{G}^c_{\bullet}$ give rise to the following diagram:
	\[
	\begin{tikzcd}
		\lVert \mathsf{A}_{\bullet} \rVert \ar[r] \ar[d] & \cA \times \Delta^{\ell} \ar[r,"\text{pr}_{\cA}"] \ar[d] & \cA \ar[d]\\
		\lVert \mathsf{V}_{\bullet} \rVert \ar[r] & G \times \Delta^{\ell} \ar[r,"\text{pr}_G" below] & G
	\end{tikzcd}
	\]
	The second square is a pullback. It is not hard to check that the first square is a pullback as well (compare this with \cite[Rem.~2.23]{HenriquesGepner-Orbispaces:2007}). Moreover, the composition $q \colon \lVert \mathsf{V}_{\bullet}\rVert \to G \times \Delta^{\ell} \to G$ in the diagram is a $G$-homotopy equivalence.
	
	For each pair $(X,f)$ where $X$ is a compact Hausdorff $G$-space and $f \colon X \to G$ is a $G$-equivariant continuous map, consider the contravariant functor
	\[
		(X,f) \mapsto K_*^G(C(X, f^*\cA))
	\] 
	from the category of compact Hausdorff $G$-spaces over $G$ to abelian groups. This functor satisfies the analogues of conditions (i) -- (iv) in \cite[\S 5]{Segal-SpecSeq:1968} in this category. Using the same argument as in the proof of \cite[Prop.~5.1]{Segal-SpecSeq:1968} we therefore end up with a spectral sequence with $E_1$-page
	\[
		E_1^{p,q} = \bigoplus_{\lvert I \rvert = p+1} K_q^G(C(V_I, \cA_I)) \cong \bigoplus_{\lvert I \rvert = p+1} K_q^G(C^*(\cE)(V_I))\ , 
	\]
	whose termination is $K^G_*(C^*(\cE)) \cong K^G_*(C(G,\cA))$, since the $*$-homo\-mor\-phism 
	\(
		C(G,\cA) \to C(\lVert \mathsf{V}_{\bullet} \rVert, q^*\cA) 
	\) 
	induces an isomorphism in equivariant $K$-theory by $G$-homotopy invariance. What remains to be done is to identify these $K$-theory groups. By Lemma~\ref{lem:GmodGI} the map $G/G_I \to V_I$ induces an isomorphism $K_q^G(C^*(\cE)(V_I)) \to K_q^G(C^*(\cE)(G/G_I))$. By the same lemma each $V_k$ is $G$-equivariantly contractible. Therefore $\left.\cA\right|_{G/G_I}$ is equivariantly trivialisable and 
	\[
		K_q^G(C^*(\cE)(G/G_I)) \cong K^G_q(C(G/G_I,\MF)) \cong K^{G_I}_q(\MF)\ .
	\]
	The matrix algebra $\MFa^{\otimes k}$ is $G$-equivariantly Morita equivalent via the imprimitivity bimodule $F(\C^n)^{\otimes k}$ to $\C$ with the trivial $G$-action. Therefore $K^{G_I}_0(\MFa^{\otimes k}) \cong K^{G_I}_0(\C) \cong R(G_I)$ and $K_1^{G_I}(\MFa^{\otimes k}) = 0$. The $*$-homomorphism $\MFa^{\otimes k} \to \MFa^{\otimes (k+1)}$ given by $a \mapsto a \otimes 1$ induces the multiplication with the $G_I$-representation $F(\C^n)$ on $K_0^{G_I}$. This implies
	\[
		K_{2q}^{G_I}(\MF) \cong R(G_I)\left[F(\rho_I)^{-1}\right] \qquad \text{and} \qquad K_{2q+1}^{G_I}(\MF) = 0
	\]
	for the $K$-theory of the direct limit.  
\end{proof}

\subsection{The module structure of $K_*^G(\cE)$}
An important consequence of Prop.~\ref{prop:spectral_seq} is that $K_*^G(C^*(\cE))$ is a module over the ring $K_0^G(\MF) \cong R(G)[F(\rho)^{-1}]$. To see this, we need the following observation about strongly self-absorbing $C^*$-dynamical systems. 

\begin{lemma} \label{lem:absorption}
	Let $G$ be a compact Lie group and let $\sigma \colon G \to U(n)$ be a unitary representation of $G$. Let $D = M_n(\C)^{\otimes \infty}$ be the infinite UHF-algebra obtained from $M_n(\C)$ and let $\alpha \colon G \to \Aut{D}$ be the action of $G$, which acts on each tensor factor of $D$ via $\Ad_{\sigma}$. Let $X$ be a compact Hausdorff $G$-space. Then the first tensor factor embedding
	\[
		\iota_X \colon C(X, D) \to C(X, D) \otimes D \quad , \quad f \mapsto f \otimes 1_D
	\]
	is strongly asymptotically G-unitarily equivalent to a $*$-isomorphism. 
\end{lemma}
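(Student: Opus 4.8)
The plan is to reduce to the case $X=\mathrm{pt}$ and then to produce, inside $D\otimes D$, an explicit norm-continuous path of unitaries assembled from permutations of tensor factors; such unitaries are automatically fixed by the diagonal $G$-action, so in fact I will obtain a path of \emph{honestly} $G$-invariant unitaries (which is stronger than what is needed).

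\emph{Reduction to a point.} As $X$ is compact and $D$ unital there is a canonical identification $C(X,D)\otimes D\cong C(X,D\otimes D)$, and this algebra is unital, so I may work with unitaries in the algebra itself rather than in a multiplier algebra. Write $\alpha^{(2)}=\alpha\otimes\alpha$ and $\iota_D\colon D\to D\otimes D$, $a\mapsto a\otimes 1_D$. First I would construct a norm-continuous path $(w_t)_{t\ge 0}$ of unitaries in $D\otimes D$ with $w_0=1$, each fixed by $\alpha^{(2)}$, such that $\Ad(w_t)\circ\iota_D$ converges in the point-norm topology to a $*$-isomorphism $\theta\colon D\to D\otimes D$. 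Granting this, set $u_t:=\mathbf 1_X\otimes w_t$. Then $(u_t)$ is a norm-continuous path of unitaries in $C(X,D)\otimes D$ with $u_0=1$; since $u_t$ is constant in $x$ and $w_t$ is $\alpha^{(2)}$-fixed, $(g\cdot u_t)(x)=\alpha^{(2)}_g(w_t)=w_t$, so each $u_t$ is $G$-invariant (a fortiori asymptotically $G$-invariant over every compact subset of $G$); and, using compactness of $f(X)$ for $f\in C(X,D)$, $\Ad(u_t)\circ\iota_X$ converges to the map $f\mapsto\theta\circ f$, which under the identification above is a $*$-isomorphism onto $C(X,D)\otimes D$. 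This is exactly a strong asymptotic $G$-unitary equivalence between $\iota_X$ and a $*$-isomorphism, so the general case reduces to the case of a point.

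\emph{The point case.} Write $D=\bigotimes_{i\ge1}M_n^{(i)}$, $D_m=M_n^{(1)}\otimes\cdots\otimes M_n^{(m)}$, and regard $D\otimes D$ as the infinite tensor product of copies of $M_n$ indexed by $\mathbb N\sqcup\mathbb N'$, the first copy of $D$ sitting on $\mathbb N$ and the second on $\mathbb N'$. For the target I would take the interleaving isomorphism $\theta\colon D\to D\otimes D$ induced by the bijection that sends the index $2k-1$ to the $k$-th slot of the first copy and $2k$ to the $k$-th slot of the second copy. The key elementary fact is: for a finite set $S$ of tensor-factor slots and a permutation $\pi$ of $S$, the leg-permutation unitary $W_\pi$ on $\bigotimes_{s\in S}\mathbb C^n$ commutes with $g\mapsto\bigotimes_{s\in S}\sigma_g$, hence lies in the \emph{finite-dimensional} fixed-point algebra $\bigl(\bigotimes_{s\in S}M_n\bigr)^G\subseteq(D\otimes D)^G$; the unitary group of a finite-dimensional $C^*$-algebra is connected, so $W_\pi$ is joined to $1$ by a path of $G$-invariant unitaries inside that fixed-point algebra. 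I would then run a telescoping (one-sided intertwining) using only such unitaries: for each $m$ pick a finite slot-set $S_m$ containing the first $m$ slots of the first copy, with $S_m\subseteq S_{m+1}$, and a permutation $\pi_m$ of $S_m$ that on those first $m$ slots reproduces the slot reassignment prescribed by $\theta$ on $D_m$, arranged so that $\pi_{m+1}$ and $\pi_m$ agree there. With $W_m:=W_{\pi_m}$ one then has $\Ad(W_m)$ and $\Ad(W_{m+1})$ both equal to $\theta\circ\iota_D^{-1}$ on $\iota_D(D_m)$, so $W_m^*W_{m+1}$ centralises $\iota_D(D_m)$ and lies in the finite-dimensional $G$-invariant algebra $\bigl(\bigotimes_{s\in S_{m+1}}M_n\,\cap\,\iota_D(D_m)'\bigr)^G$; there I would choose a path $s\mapsto Z^{(m)}_s$ of $G$-invariant unitaries from $1$ to $W_m^*W_{m+1}$. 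Concatenating via $w_t:=W_mZ^{(m)}_{t-m}$ for $t\in[m,m+1]$ (with $W_0=1$) gives a norm-continuous path of $G$-invariant unitaries with $w_0=1$, and because $Z^{(m')}_s\in\iota_D(D_m)'$ for all $m'\ge m$ one checks that $\Ad(w_t)\circ\iota_D=\theta$ on $D_m$ for every $t\ge m$; density of $\bigcup_m D_m$ in $D$ then yields $\Ad(w_t)\circ\iota_D\to\theta$ in the point-norm topology, completing the point case.

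\emph{Main obstacle.} Each ingredient is soft: the one structural input is that a permutation of tensor legs is a unitary in a finite-dimensional --- hence connected --- $G$-fixed subalgebra, and the only analytic inputs are density of $\bigcup_m D_m$ and continuity of concatenated paths. The step that requires genuine care is the combinatorial bookkeeping of the intertwining, namely choosing the nested slot-sets $S_m$ and permutations $\pi_m$ so that simultaneously $\Ad(W_m)\circ\iota_D$ is \emph{exactly} $\theta$ on $D_m$, the correction $W_m^*W_{m+1}$ really commutes with $\iota_D(D_m)$ (so that the $G$-invariant connecting path $Z^{(m)}$ lives in the correct relative commutant), and the path is continuous at the junctions $t=m$. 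A tidier packaging --- which I might use instead --- is to not fix $\theta$ in advance but to build the $W_m$ by a back-and-forth that stabilises $\Ad(W_m)\circ\iota_D$ on each $D_{m-1}$ while forcing $\bigcup_m\Ad(W_m)(\iota_D(D))$ to be dense in $D\otimes D$, and then to read off $\theta$ as the limit isomorphism; the $G$-invariance and connectedness inputs are identical, and this is essentially where all the writing goes. (One could alternatively invoke the theory of semi-strongly self-absorbing $C^*$-dynamical systems, but the direct argument above is self-contained.)
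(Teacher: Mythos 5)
Your argument is correct in outline and verifiable in its parts, but it takes a genuinely different route from the paper. The paper's proof is entirely a citation-stacking argument: it observes (via Szab\'o's work on strongly self-absorbing $C^*$-dynamical systems) that $(D,\alpha)$ is strongly self-absorbing, that $\alpha$ is unitarily regular because $(D_{\infty,\alpha})^{\alpha_\infty}=(D^\alpha)_\infty$ has connected unitary group, and then invokes Szab\'o's abstract absorption theorem after exhibiting a unital equivariant $*$-homomorphism $D\to F_{\infty,\alpha}(C(X,D))$ using an approximately central sequence of unital equivariant endomorphisms of $D$. Your approach instead builds the intertwining unitaries by hand out of leg-permutation unitaries, exploiting that they commute with $\sigma_g^{\otimes k}$ and that the finite-dimensional fixed-point algebras in which they live have connected unitary groups. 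This is self-contained, elementary, and in fact yields something strictly stronger than required (honestly $G$-invariant unitaries rather than merely asymptotically invariant ones); the cost is that one must carry out the combinatorial bookkeeping of the nested slot sets $S_m$, the compatible permutations $\pi_m$, and the junction continuity of the concatenated path, all of which you correctly flag as the substantive step. The paper's route is shorter on the page and generalizes immediately to any strongly self-absorbing equivariant system; your route trades generality for transparency and makes the specific structure (inner action by a tensor power of a representation, permutation symmetry) do the work directly. Both the reduction to $X=\mathrm{pt}$ (via equicontinuity of $*$-homomorphisms and compactness of $f(X)$) and the telescoping argument check out, and the limit isomorphism $f\mapsto\theta\circ f$ is visibly $G$-equivariant because the interleaving bijection is compatible with the factor-wise action.
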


\begin{proof}
By \cite[Prop.~6.3]{Szabo-strselfabsdyn-II:2018}, $(D,\alpha)$ is strongly self-absorbing in the sense of \cite[Def.~3.1]{Szabo-strselfabsdyn:2018}. Using the notation introduced in \cite[Def.~2.4]{Szabo-strselfabsdyn-II:2018} we see that
\[
	\left(D_{\infty,\alpha}\right)^{\alpha_{\infty}} = \left(D^{\alpha}\right)_{\infty}
\]
by integrating over $G$. The fixed-point algebra $D^{\alpha}$ is an AF-algebra and thus has a path-connected unitary group. By \cite[Prop.~2.19]{Szabo-strselfabsdyn-II:2018} the action $\alpha$ is unitarily regular. The result will therefore follow from \cite[Thm.~3.2]{Szabo-strselfabsdyn-III:2017} if we can construct a unital equivariant $*$-homomorphism 
\(
	\theta \colon D \to F_{\infty,\alpha}(C(X,D))
\). Let $s_k \colon D \to D$ be an approximately central sequence of unital equivariant $*$-homomorphisms and define
\[
	\theta(d)_k = 1_{C(X)} \otimes s_k(d)\ .
\]
This $*$-homomorphism satisfies all conditions. 
\end{proof}

There is a canonical isomorphism $K^G_0(\C) \cong R(G)$. As we have seen above, we also have $K^G_0(\MF) \cong R(G)[F(\rho)^{-1}]$, where the isomorphism can be chosen in such a way that the unit map $\C \to \MF$ induces the localisation homomorphism $R(G) \to R(G)[F(\rho)^{-1}]$. Note that the multiplication in $R(G)$ corresponds to the tensor product in $K^G_0(\C)$. Likewise, the identification $K^G_0(\MF) \cong R(G)[F(\rho)^{-1}]$ is also an isomorphism of rings. The  multiplication in $R(G)[F(\rho)^{-1}]$ corresponds to
\[
\begin{tikzcd}
	K_0^G(\MF) \otimes K_0^G(\MF) \ar[r] & K_0^G(\MF \otimes \MF) & \ar[l,"\cong"] K_0^G(\MF) 
\end{tikzcd}
\]
where the second map is induced by the first factor embedding. To see why this is true it suffices to note that the following diagram commutes 
\[
\begin{tikzcd}
	K_0^G(\MF) \otimes K_0^G(\MF) \ar[r] & K_0^G(\MF \otimes \MF) & \ar[l,"\cong" above] K_0^G(\MF) \\
	K_0^G(\C) \otimes K_0^G(\C) \ar[r] \ar[u] & K_0^G(\C) \ar[u] \ar[ur] 
\end{tikzcd}
\]
where the vertical homomorphism are induced by unit maps and turn into isomorphisms after localisation.

\begin{prop} \label{prop:module_structure}
	The first factor embedding $C^*(\cE) \to C^*(\cE) \otimes \MF$ given by $a \mapsto a \otimes 1_{\MF}$ induces an isomorphism in equivariant $K$-theory and turns $K_*^G(C^*(\cE))$ into a module over the ring $K_0^G(\MF)\cong R(G)[F(\rho)^{-1}]$ via
	\[
	\begin{tikzcd}
		K_*^G(C^*(\cE)) \otimes K_0^G(\MF) \ar[r] & K_*^G(C^*(\cE) \otimes \MF) & \ar[l,"\cong" midway] K_*^G(C^*(\cE))
	\end{tikzcd}
	\]
	where the first homomorphism is induced by the tensor product in $K$-theory. The sequence constructed in Prop.~\ref{prop:spectral_seq} is a spectral sequence of modules. 
\end{prop}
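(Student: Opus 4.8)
The plan is to reduce all three assertions to the single fact that the first factor embedding $\iota\colon C^*(\cE) \to C^*(\cE)\otimes\MF$, $a \mapsto a\otimes 1_{\MF}$, is an isomorphism in $G$-equivariant $K$-theory. First I would record that $C^*(\cE)\otimes\MF$ (with $\MF$ carrying the action $\alpha$ from Sec.~\ref{subsec:groupaction}) is again a continuous $C(G)$-algebra with fibre $C^*(\cE_g)\otimes\MF$ over $g$, that $\iota$ is $C(G)$-linear, and hence for each closed $V\subset G$ it restricts to the first factor embedding $C^*(\cE)(V) \to C^*(\cE)(V)\otimes\MF$. The coefficient functor $(X,f)\mapsto K^G_*(C(X,f^*\cA)\otimes\MF)$ satisfies the same axioms (i)--(iv) of \cite{Segal-SpecSeq:1968} used in Cor.~\ref{cor:spectral_seq} (tensoring by the nuclear unital algebra $\MF$ is exact), and $a\mapsto a\otimes 1_{\MF}$ is a natural transformation from the coefficient functor of $C^*(\cE)$ to this one. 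Running the construction of Cor.~\ref{cor:spectral_seq} for both functors therefore produces a morphism of spectral sequences converging to $\iota_*\colon K^G_*(C^*(\cE)) \to K^G_*(C^*(\cE)\otimes\MF)$.

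On the $E_1$-page this morphism is, in bidegree $(p,q)$ with $\lvert I\rvert = p+1$, the map $K^G_q(C^*(\cE)(V_I)) \to K^G_q(C^*(\cE)(V_I)\otimes\MF)$. Using Lemma~\ref{lem:GmodGI} and the equivariant trivialisability of $\left.\cA\right|_{G/G_I}$ exactly as in the proof of Cor.~\ref{cor:spectral_seq}, this is naturally identified with the first factor embedding
\[
	K^{G_I}_q(\MF)\longrightarrow K^{G_I}_q(\MF\otimes\MF)
\]
for the group $G_I$ acting through $\Ad_{F(\rho_I)}$. Lemma~\ref{lem:absorption}, applied with the compact Lie group $G_I$, the representation $F(\rho_I)$ and $X$ a point, states exactly that this embedding is strongly asymptotically $G_I$-unitarily equivalent to a $*$-isomorphism, hence induces an isomorphism on $K^{G_I}_*$. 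Thus the morphism of $E_1$-pages is an isomorphism; since both spectral sequences are concentrated in the finitely many columns $0\le p\le\ell$, the comparison theorem gives an isomorphism on $E_\infty$ and therefore on the (finitely filtered) abutments, proving that $\iota_*$ is an isomorphism.

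Granting this, I would define the $K^G_0(\MF)$-action on $K^G_*(C^*(\cE))$ to be the composite in the statement (external product followed by $\iota_*^{-1}$). Unitality is immediate since $\iota_*(x)=x\boxtimes[1_{\MF}]$, and associativity is a short diagram chase using associativity and graded commutativity of the external product together with the compatibility of $\iota$ with the first factor embedding $\MF\to\MF\otimes\MF$ — the very isomorphism used to equip $K^G_0(\MF)\cong R(G)[F(\rho)^{-1}]$ with its ring structure in the discussion preceding the Proposition. For the module structure on the spectral sequence I would use naturality once more: for fixed $\xi\in K^G_0(\MF)$ the external product $-\boxtimes\xi$ is a natural transformation between the two coefficient functors above, hence induces a morphism of spectral sequences; composing with $\iota_*^{-1}$ yields a self-morphism of the spectral sequence of $C^*(\cE)$ realising multiplication by $\xi$ on the abutment. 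Since morphisms of spectral sequences commute with differentials, every $d_r$ is $K^G_0(\MF)$-linear, and one checks directly that on $E_1^{p,q}=\bigoplus_{\lvert I\rvert=p+1}K^{G_I}_q(\MF)$ the action is the evident one, with $R(G)[F(\rho)^{-1}]$ acting through the restriction homomorphisms to the rings $R(G_I)[F(\rho_I)^{-1}]$.

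I expect the main obstacle to be the bookkeeping in the first two paragraphs: verifying that the tensored coefficient functor genuinely satisfies Segal's axioms and that the identifications of the $E_1$-terms with $K^{G_I}_q(\MF)$ are natural in $\iota$, so that Lemma~\ref{lem:absorption} can be invoked columnwise. Once the morphism of spectral sequences is set up, the argument is purely formal: an application of Lemma~\ref{lem:absorption}, the comparison theorem for finitely-filtered spectral sequences, and the standard properties of the external product in equivariant $K$-theory.
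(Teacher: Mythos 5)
Your proposal is correct and follows essentially the same strategy as the paper: show the first-factor embedding induces an isomorphism on each $E_1$-term via Lemma~\ref{lem:absorption}, deduce an isomorphism of abutments by naturality of the spectral sequence, and then define the module structure via the external product composed with the inverse of this isomorphism. The only cosmetic difference is that you apply Lemma~\ref{lem:absorption} with the group $G_I$ acting on $\MF$ and $X$ a point, while the paper applies it with $G$ acting and $X=X_I\cong G/G_I$; these are interchangeable via the natural isomorphism $K^G_*(C(G/G_I,-))\cong K^{G_I}_*(-)$.
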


\begin{proof}
	Fix a non-empty subset $I \subset \{0,\dots, \ell\}$. Let $X_I = \iota_I(G/G_I) \subset G$, denote the barycentre of $\Delta_I$ by $\xi_I$ and note that $X_I = q^{-1}(\xi_I)$. 	We will first show that the embedding $C^*(\cE)(X_I) \to C^*(\cE)(X_I) \otimes \MF$ induces an isomorphism in equivariant $K$-theory. The group elements $g \in X_I$ share the same eigenvalues. Thus, there exists $z_0 \in \bT$ with the property that $(g,z_0) \in Y$ for one (and hence all) $g \in X_I$. Define $\sigma_I \colon X_I \to Y$ by $\sigma_I(g) = (g,z_0)$ and let $\sfX_I$ be the $G$-equivariant Morita equivalence resulting from  Lemma~\ref{lem:equiv_trivialisation} using the section $\sigma_I$. The claimed isomorphism is then a consequence of the following commutative diagram
\[
	\begin{tikzcd}
		K^G_*(C^*(\cE)(X_I)) \ar[r] \ar[d,"\cong"] & K^G_*(C^*(\cE)(X_I) \otimes \MF) \ar[d,"\cong"] \\
		K^G_*(C(X_I,\MF)) \ar[r,"\cong"] & K^G_*(C(X_I,\MF \otimes \MF))
	\end{tikzcd}
\]
in which the vertical maps are induced by $\sfX_I$ and the horizontal isomorphism follows from Lemma~\ref{lem:absorption}. 

The first factor embedding $C^*(\cE) \to C^*(\cE) \otimes \MF$ induces a natural transformation between the spectral sequences from Prop.~\ref{prop:spectral_seq} associated to the functors $(X,f) \mapsto K^G_*(C^*(f^*\cE))$ and $(X,f) \mapsto K^G_*(C^*(f^*\cE) \otimes \MF)$, which is an isomorphism on all pages by our previous observation. This implies that $K^G_*(C^*(\cE)) \to K^G_*(C^*(\cE) \otimes \MF)$ is also an isomorphism, which gives rise to the module structure as described. A diagram chase shows that this structure is compatible with the $K$-theoretic description of the multiplication in $K^G_0(\MF)$ described above.
\end{proof}

\begin{remark}
It would be interesting to know whether the first factor embedding $C^*(\cE) \to C^*(\cE) \otimes \MF$ itself is strongly asymptotically $G$-unitarily equivalent to a $*$-isomorphism. The analogous non-equivariant statement is true by \cite[Lemma~3.4]{DadarlatP-DixmierDouady:2016} and Cor.~\ref{cor:CStarBundle} .  
\end{remark}

\section{The equivariant higher twisted $K$-theory of $SU(n)$}
In this section we will compute the equivariant higher twisted $K$-theory of $G = SU(n)$ for $n \in \{2,3\}$ with respect to the adjoint action of $G$ on itself and the equivariant twist described by the Fell bundle $\cE$ constructed in Cor.~\ref{cor:Fell_bundle}. This is defined to be the $G$-equivariant operator algebraic $K$-theory of the $G$-$C^*$-algebra $C^*(\cE)$, i.e.
\(
	K^G_{\ast}(C^*(\cE))
\). 

\subsection{The case $SU(2)$}
For $G = SU(2)$ we have $\ell =1$. The map $q \colon G \to \Delta^1$ can be described as follows: Since the eigenvalues of any $g \in SU(2)$ are conjugate to one another, each $g \neq \pm 1$ has a unique eigenvalue $\lambda_g$ with non-negative imaginary part. Note that $g \mapsto \arg(\lambda_g)$ extends to all of $G$. Let $q \colon SU(2) \to [0,1]$ be given by
\[
	q(g) = \frac{\arg(\lambda_g)}{\pi} \in [0,1]\ .
\]
If we pick $\delta_n = \frac{2}{3}$ the spectral sequence from Prop.~\ref{prop:spectral_seq} boils down to the Mayer-Vietoris sequence for the $G$-equivariant closed cover of $SU(2)$ by $V_0 = q^{-1}([0,\tfrac{2}{3}])$ and $V_1 = q^{-1}([\tfrac{1}{3},1])$ in this case, which reduces to the following six-term exact sequence due to the vanishing $K_1^G$-terms: 
\[
	\begin{tikzcd}[column sep=0.8cm]
		 K_0^G(C^*(\cE)) \ar[r] & R_F(SU(2)) \oplus R_F(SU(2)) \ar[r,"d"] & R_{F}(\bT) \ar[d] \\
		 0 \ar[u] & \ar[l] 0 & \ar[l] K_1^G(C^*(\cE))
	\end{tikzcd}
\] 
where $R_F(\bT) = R(\bT)[F(\rho_{\{0,1\}})^{-1}]$ and $R_F(SU(2)) = R(SU(2))[F(\rho)^{-1}]$. Let $\rho$ be the standard representation of $SU(2)$, then 
\begin{align*}
	R_F(SU(2)) & \cong \Z[\rho][F(\rho)^{-1}] \ ,\\
	R_F(\bT) & \cong \Z[t,t^{-1}][F(t + t^{-1})^{-1}]
\end{align*}
and the restriction to $\bT$ maps $\rho$ to $t + t^{-1}$. Note that $t^{-1} \in \Z[t,t^{-1}]$ corresponds to the dual representation $\C^*$ of $\bT$.

Observe that $F(\C)$ is a representation of $\bT$, which we will identify with its corresponding polynomial in $\Z[t,t^{-1}]$ and denote by $F(t)$. Let $\alpha \in \Aut{\Z[t,t^{-1}]}$ be the ring automorphism given by $\alpha(t) = t^{-1}$. Note that 
\(
	\alpha(F(t)) = F(t^{-1})
\),
where the right hand side agrees with $F(\C^*)$ and that $R_F(SU(2))$ agrees with the fixed points of $R_F(\bT)$ with respect to $\alpha$.

\begin{lemma} \label{lem:basis}
	As a module over $R_F(SU(2))$ the ring $R_F(\bT)$ is free of rank~$2$ and $\beta = \{1, t\}$ is a basis. 
\end{lemma}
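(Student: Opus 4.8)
The plan is to exhibit $R_F(\bT)$ as obtained from $R(\bT)$ by a base change of the corresponding statement for the (unlocalised) representation rings, and then show that localisation is flat enough to preserve freeness and the given basis. First I would recall the classical fact that $R(\bT) \cong \Z[t,t^{-1}]$ is free of rank $2$ as a module over $R(SU(2)) \cong \Z[\rho]$, where $\rho = t + t^{-1}$, with basis $\{1,t\}$: indeed every Laurent polynomial $p(t)$ can be written uniquely as $p(t) = a(\rho) + b(\rho)\,t$ with $a,b \in \Z[\rho]$, since $t^2 = \rho t - 1$ lets one reduce recursively, and uniqueness follows by comparing the images under the two embeddings $t \mapsto \zeta^{\pm 1}$ for a transcendental $\zeta$ (or by a degree/leading-coefficient argument in the free $\Z[\rho]$-module with basis the monomials $t^k$, $k\ge 0$, after clearing powers of $t$). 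Thus $R(\bT) = R(SU(2))\cdot 1 \oplus R(SU(2))\cdot t$.

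Next I would observe that the element we invert on the $\bT$-side is the restriction of the element inverted on the $SU(2)$-side: the standard representation $\rho$ of $SU(2)$ restricts to $t + t^{-1}$ on $\bT$, and since $F$ is a functor, $F(\rho)$ restricts to $F(\rho_{\{0,1\}}) = F(t+t^{-1})$. Writing $S \subset R(SU(2))$ for the multiplicative set of powers of $F(\rho)$, we therefore have $R_F(SU(2)) = S^{-1}R(SU(2))$ and $R_F(\bT) = S^{-1}R(\bT)$, where in the latter $S$ acts through the restriction homomorphism $R(SU(2)) \to R(\bT)$. Localisation is exact and commutes with finite direct sums, so applying $S^{-1}(-)$ to the decomposition $R(\bT) = R(SU(2))\cdot 1 \oplus R(SU(2))\cdot t$ gives
\[
	R_F(\bT) \;=\; S^{-1}R(\bT) \;=\; R_F(SU(2))\cdot 1 \;\oplus\; R_F(SU(2))\cdot t,
\]
which is precisely the statement that $R_F(\bT)$ is free of rank $2$ over $R_F(SU(2))$ with basis $\beta = \{1,t\}$.

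The only point requiring a little care — and the step I would flag as the main obstacle, modest as it is — is checking that $F(\rho)$ is not a zero divisor (so that localisation does not collapse the module) and that $S^{-1}R(SU(2)) \to S^{-1}R(\bT)$ is genuinely the map realising the module structure used in the Mayer--Vietoris sequence; both follow because $R(SU(2)) = \Z[\rho]$ is an integral domain and $F(\rho)$ is a nonzero element (its image under the dimension homomorphism $R(SU(2))\to\Z$ is $d_F = \dim F(\C^2) \geq 1$), hence $R(SU(2)) \hookrightarrow R_F(SU(2))$, and similarly for $\bT$; the compatibility of the restriction map with localisation is automatic from functoriality of $S^{-1}(-)$. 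With these observations the proof is complete.
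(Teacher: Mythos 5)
Your proposal is correct and follows essentially the same strategy as the paper: reduce to the unlocalised statement that $\Z[t,t^{-1}]$ is free of rank $2$ over $\Z[\rho]$ with basis $\{1,t\}$, then pass to the localisation at $F(\rho)$ using exactness of $S^{-1}(-)$ and compatibility with direct sums. The paper treats the localisation step tersely (``it suffices to show \ldots'') and instead spends its effort giving closed-form expressions
\[
	g_1 = \frac{t^{-1}f - t\,\alpha(f)}{t^{-1}-t}, \qquad g_2 = \frac{\alpha(f) - f}{t^{-1}-t},
\]
for the coefficients of $f = g_1 + t g_2$ (with $\alpha(t) = t^{-1}$), verifying $\alpha$-invariance and membership in $\Z[q]$ by checking monomials, and proving uniqueness by a $2\times 2$ matrix computation; you instead spell out the localisation carefully but argue existence by recursion on $t^2 = \rho t - 1$ and uniqueness by evaluation at a transcendental point, both of which are fine if a little sketchy. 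One small remark: your concern that $F(\rho)$ might be a zero divisor is not really the issue for freeness — localising a free module always yields a free module of the same rank — it only matters for the resulting ring to be nonzero, which you correctly settle via the dimension homomorphism. Overall the two proofs are interchangeable; the paper's explicit formulas have the minor advantage of being directly reusable in Lemma~\ref{lem:differential}, where the same expressions \eqref{eqn:coeff_F} reappear as the matrix entries of the differential.
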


\begin{proof}
	Let $q = t + t^{-1}$. Since localisations of free modules are free, it suffices to show that $f \in \Z[t,t^{-1}]$ can be uniquely written as $f = g_1 + tg_2$ with $g_i \in \Z[q] \subset \Z[t,t^{-1}]$. Let $\alpha \in \Aut{\Z[t,t^{-1}]}$ be given by $\alpha(t) = t^{-1}$. Let
	\[
		g_1 = \frac{t^{-1}f - t\alpha(f)}{t^{-1} - t} \qquad, \qquad g_2 = \frac{\alpha(f)  - f}{t^{-1} - t} \ .
	\]
	Note that $\alpha(g_i) = g_i$ for $i \in \{1,2\}$ and $f = g_1 + tg_2$. Let $m \in \Z$ and consider $f = t^m$. In this case the numerator is divisible by the denominator and $g_i \in \Z[q]$. Using the linearity of the expressions in $f$ we see that $g_i \in \Z[q]$ holds in general. Suppose that $g_1 + tg_2 = g_1' + tg_2'$ for another pair $g_i' \in \Z[q]$. Applying $\alpha$ to both sides we obtain
	\[
		\begin{pmatrix}
			1 & t \\
			1 & t^{-1}
		\end{pmatrix}
		\begin{pmatrix}
			g_1 \\
			g_2
		\end{pmatrix} = 
		\begin{pmatrix}
			1 & t \\
			1 & t^{-1}
		\end{pmatrix}
		\begin{pmatrix}
			g_1' \\
			g_2'
		\end{pmatrix}	\ .
	\]
	Multiplication by the matrix $\left(\begin{smallmatrix}
		t^{-1} & -t \\
		-1 & 1
	\end{smallmatrix}\right)$ yields $(t^{-1} - t)g_i = (t^{-1} - t)g_i'$ and a comparison of coefficients gives $g_i = g_i'$.
\end{proof}

\begin{lemma} \label{lem:differential}
	If we identify $R_F(\bT)$ with $R_F(SU(2)) \oplus R_F(SU(2))$ using the basis $\beta = \{1,t\}$ from Lemma~\ref{lem:basis}, then the homomorphism 
	\[
		d \colon R_F(SU(2)) \oplus R_F(SU(2)) \to R_F(\bT)
	\]
	is represented by the matrix 
	\[
		\begin{pmatrix}
			1 & -g_1(F) \\
			0 & -g_2(F)
		\end{pmatrix}
	\]
	for polynomials $g_i(F) \in R_F(SU(2))$ given by
	\begin{equation} \label{eqn:coeff_F}
		g_1(F) = \frac{t^{-1}F(t) - tF(t^{-1})}{t^{-1} - t} \qquad \text{and} \qquad g_2(F) = \frac{F(t^{-1}) - F(t)}{t^{-1} - t}\ .
	\end{equation}
	(Note that $g_i(F)$ satisfies $\alpha(g_i(F)) = g_i(F)$ and therefore describes an element in $R_F(SU(2))$).
\end{lemma}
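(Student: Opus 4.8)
The plan is to identify $d$ with the difference of the two restriction maps of the Mayer--Vietoris sequence and to compute each of them after choosing the local sections of $\pi\colon Y\to G$ entering Lemma~\ref{lem:equiv_trivialisation} as conveniently as possible. Put $V_0=q^{-1}([0,\tfrac{2}{3}])$, $V_1=q^{-1}([\tfrac13,1])$ and $V_{01}=V_0\cap V_1=q^{-1}([\tfrac13,\tfrac23])$; these are closed and, since $q$ is conjugation invariant, $G$-invariant. The centralisers appearing in Cor.~\ref{cor:spectral_seq} are $G_{\{0\}}=G_{\{1\}}=SU(2)$ and $G_{\{0,1\}}=\bT$, so under its identifications $d$ is, with the sign conventions of the six-term sequence above, the map $(a,b)\mapsto \mathrm{res}_0(a)-\mathrm{res}_1(b)$, where $\mathrm{res}_i\colon R_F(SU(2))\to R_F(\bT)$ is induced by $V_{01}\hookrightarrow V_i$ after the $G$-deformation retractions $V_i\to X_{\{i\}}$ and $V_{01}\to X_{\{0,1\}}\cong G/\bT$ of Lemma~\ref{lem:GmodGI}. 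By Prop.~\ref{prop:module_structure} the map $d$ is $R_F(SU(2))$-linear, and the module structure on $E_1^{1,*}\cong R_F(\bT)$ is the one coming from the restriction $R(SU(2))\to R(\bT)$; hence $d$ is determined by $\mathrm{res}_0(1)$ and $\mathrm{res}_1(1)$, which, written in the basis $\beta=\{1,t\}$ of Lemma~\ref{lem:basis}, are the two columns of the matrix.

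For $\mathrm{res}_0$ I note that $-1$ is not an eigenvalue of any $g\in V_0$ --- the eigenvalues of such a $g$ have argument in $[-\tfrac{2\pi}{3},\tfrac{2\pi}{3}]$ --- so $g\mapsto(g,-1)$ is a section of $\pi$ over all of $V_0$, hence also over $V_{01}$ and over the retracts $X_{\{0\}}$ and $X_{\{0,1\}}$. Using this one section to build the Morita equivalences of Lemma~\ref{lem:equiv_trivialisation} makes the trivialisations over $V_0$ and over $V_{01}$ compatible with the inclusion, so $\mathrm{res}_0$ becomes the ordinary restriction $K^G_0(C(V_0,\MF))\to K^G_0(C(V_{01},\MF))$, which under $K^G_0(C(V_0,\MF))\cong K^{SU(2)}_0(\MF)\cong R_F(SU(2))$ and $K^G_0(C(V_{01},\MF))\cong K^{\bT}_0(\MF)\cong R_F(\bT)$ is the localisation of the representation restriction $R(SU(2))\to R(\bT)$. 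In particular $\mathrm{res}_0(1)=1=1\cdot 1+0\cdot t$, which is the first column $\bigl(\begin{smallmatrix}1\\0\end{smallmatrix}\bigr)$.

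For $\mathrm{res}_1$ one cannot use $-1$ over $V_1$, as it is the only eigenvalue of the central element of $V_1$ at $q=1$; instead I take $g\mapsto(g,w)$ with $w=e^{i\epsilon}$, $0<\epsilon<\tfrac{\pi}{3}$, which is admissible on all of $V_1$. With this section $\mathrm{res}_1$ is the ordinary restriction into the trivialisation of $C^*(\cE)(V_{01})$ built from $w$, while the trivialisation fixed above uses $-1$; the two differ by the $G$-equivariant Morita self-equivalence of $C(V_{01},\MF)$ whose fibre over $g$ is the Fell bundle element $\cE_{(g,-1,w)}$, viewed as an $\MF$-$\MF$ equivalence bimodule through the isomorphisms $\varphi_+$ of Cor.~\ref{cor:phi_plus} and the extension of Thm.~\ref{thm:extension_of_Fell_bdls}. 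Since $w<\lambda_g<-1$ in the circular order of $\bT\setminus\{1\}$, where $\lambda_g$ is the unique eigenvalue of $g\in V_{01}$ with argument in $(\tfrac{\pi}{3},\tfrac{2\pi}{3})$, this bimodule is the conjugate of $F(\Eig{g}{\lambda_g})\otimes\MF$; and after retracting $V_{01}$ onto $G/\bT$ the eigenspace bundle $g\mapsto\Eig{g}{\lambda_g}$ becomes the associated bundle $G\times_{\bT}\C_t$ of the standard character $t$ of $\bT=\{\mathrm{diag}(t,t^{-1})\}$, so $[F(\Eig{\cdot}{\lambda})]=F(t)$, which is a unit in $R_F(\bT)$ because $F(\rho_{\{0,1\}})$ is represented by $F(t)\,F(t^{-1})$. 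Tracking the orientation, the conjugate-bimodule construction and $\varphi_+$ through this self-equivalence gives $\mathrm{res}_1(1)=F(t)$, hence $d(0,1)=-F(t)$. Writing $-F(t)=-g_1(F)\cdot 1-g_2(F)\cdot t$ by Lemma~\ref{lem:basis} --- with $g_i(F)$ exactly the coefficients produced by the formula in its proof for $f=F(t)$, $\alpha(f)=F(t^{-1})$, i.e.\ the expressions \eqref{eqn:coeff_F}, and satisfying $g_1(F)+t\,g_2(F)=F(t)$ by a one-line check --- yields the second column $\bigl(\begin{smallmatrix}-g_1(F)\\-g_2(F)\end{smallmatrix}\bigr)$, completing the matrix.

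The routine parts are the choices of sections, $G$-homotopy invariance, and the equivariant Green--Julg isomorphism $K^G_0(C(G/H,\MF))\cong K^H_0(\MF)$ together with its naturality in $H\hookrightarrow G$. The genuinely delicate step is the last computation of the third paragraph: matching the change-of-trivialisation self-equivalence over $V_{01}$ with $\cE_{(g,-1,w)}$ and then passing it through the conjugate-module construction and $\varphi_+$ precisely enough to conclude that its effect on $K^G_0(C(V_{01},\MF))\cong R_F(\bT)$ is multiplication by $F(t)$ --- and not $F(t^{-1})$ or $F(t)^{-1}$, which would be incompatible with \eqref{eqn:coeff_F}. This is where the explicit construction of $\varphi_+$ in Cor.~\ref{cor:phi_plus} and of the extension in Thm.~\ref{thm:extension_of_Fell_bdls} has to be used.
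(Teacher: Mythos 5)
Your strategy matches the paper's: decompose $d$ through the Mayer--Vietoris diagram, trivialise $C^*(\cE)(V_i)$ via Lemma~\ref{lem:equiv_trivialisation} by choosing sections, compute $\mathrm{res}_0$ as the plain restriction homomorphism, and reduce $\mathrm{res}_1$ to a change-of-trivialisation bimodule over $V_{01}$. Your $w = e^{i\epsilon}$ plays the role of the paper's $\omega_1 = e^{i\pi/6}$; any such choice works.

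There is, however, a concrete error in the direction of the change-of-trivialisation bimodule, and the step you yourself flag as ``genuinely delicate'' is not actually carried out but asserted. With $\sfX_{V_1}$ built from the section $g\mapsto(g,w)$ and $\sfX_{V_0}$ from $g\mapsto(g,-1)$, the relevant bimodule is $\sfX_{V_1}^{\mathrm{op}} \otimes_{C^*(\cE)(V_{01})} \sfX_{V_0}$, whose fibre over $g$ is $\cE_{(g,w,-1)}$, not $\cE_{(g,-1,w)}$. (Tracing the Fell-bundle convolution: $\sfX_{V_1}^{\mathrm{op}}$ has fibre $\cE_{(g,w,z)}$ at $(g,z)$ and $\sfX_{V_0}$ has fibre $\cE_{(g,z,-1)}$, so the tensor product over $C^*(\cE)(V_{01})$ convolves the intermediate $z$ away and lands in $\cE_{(g,w,-1)}$.) Since $w<\lambda_g<-1$ in the order on $\bT\setminus\{1\}$, the pair $(g,w,-1)$ lies in $\cG_+$, so this fibre is $F(\Eig{g}{\lambda_g})\otimes\MF$ directly from the definition of $\cE_+$; it is \emph{not} the conjugate, and the extension to $\cG_-$ plays no role here. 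Restricted to $X_{\{0,1\}}\cong G/\bT$ it becomes $F(L)\otimes\MF$ with $L$ the canonical line bundle, whose $K$-theoretic effect on $K_0^G(C(G/\bT,\MF))\cong R_F(\bT)$ is multiplication by $F(t)$, and Lemma~\ref{lem:basis} then gives the second column.

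With your bimodule $\cE_{(g,-1,w)}$ --- which is the element of $\cE_-$, i.e.\ $\bigl(F(\Eig{g}{\lambda_g})\otimes\MF\bigr)^{\mathrm{op}}$ --- the induced self-map of $K$-theory is the \emph{inverse} of the one above, i.e.\ multiplication by $F(t)^{-1}$, since opposite bimodules induce inverse Picard elements. Your appeal to ``otherwise it would be incompatible with \eqref{eqn:coeff_F}'' is circular, as \eqref{eqn:coeff_F} is the statement being proved. The resolution is not a sign flip hidden in $\varphi_+$ or in the conjugate-module construction: it is that the correct bimodule is the one in $\cE_+$, for which $F(t)$ falls out without any delicacy.
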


\begin{proof}
	We can identify $K^G_0(\MF) \cong R_F(SU(2))$ and $K^{\bT}_0(\MF) \cong R_F(\bT)$. Using these isomorphisms, the homomorphism $d$ fits into the commutative diagram
	\begin{equation} \label{eqn:diag_d}
	\begin{tikzcd}
		K^G_0(C^*(\cE)(V_0)) \oplus K^G_0(C^*(\cE)(V_1)) \ar[d,"\cong" left] \ar[r] & K^G_0(C^*(\cE)(V_0 \cap V_1)) \ar[d,"\cong"] \\
		K^G_0(\MF) \oplus K^G_0(\MF) \ar[r,"d" below] & K^{\bT}_0(\MF)
	\end{tikzcd}		
	\end{equation}
	where the upper horizontal map is induced by the inclusions $V_0 \cap V_1 \to V_i$ for $i \in \{0,1\}$. The two vertical isomorphisms are constructed as follows: Both $X_{i} \cong G/G_{i} = \ast$ are one-point spaces. By Lemma~\ref{lem:GmodGI} the inclusions $X_{i} \to V_i$ are $G$-equivariant homotopy equivalences. We will choose specific Morita equivalences between $C^*(\cE)(V_i)$ and $C(V_i, \MF)$, which give rise to the following isomorphisms
	\[
	\begin{tikzcd}
		K^G_0(C^*(\cE)(V_i)) \ar[r,"\cong"] & K^G_0(C(V_i,\MF)) \ar[r,"\cong"] & K^G_0(\MF)
	\end{tikzcd}
	\]
	where the first map is induced by the Morita equivalence and the second by the inclusion $X_i \to V_i$. Let $\omega_0 = -1$ and $\omega_1 = \exp(\tfrac{\pi i}{6})$. Consider the sections $\sigma_i \colon V_i \to Y$ given by $\sigma_i(g) = (g,\omega_i)$ for $i \in \{0,1\}$, which are well-defined by our choice of $\delta$ in the definition of $V_i$. By Lem.~\ref{lem:equiv_trivialisation} they induce equivariant Morita equivalences $\sfX_{V_i}$ between $C^*(\cE)(V_i)$ and $C(V_i,\MF)$. 
	
	For the vertical isomorphism on the right hand side we restrict the Morita equivalence induced by $\sfX_{V_0}$ to $V_0 \cap V_1$ and use
	\[
	\begin{tikzcd}
		K^G_0(C^*(\cE)(V_0 \cap V_1)) \ar[r,"\cong"] & K^G_0(C(V_0 \cap V_1,\MF)) \ar[r,"\cong"] & K^G_0(C(G/\bT, \MF))
	\end{tikzcd}
	\]
	with the first map induced by the equivalence and the second by 
	\[
		G/\bT \cong X_{\{0,1\}} \to V_0 \cap V_1 \qquad , \qquad [g] \mapsto g\begin{pmatrix}
			i & 0 \\
			0 & -i
		\end{pmatrix}g^{-1}\ .
	\]
	We obtain the following description of $d$: If $d(H_0,H_1) = d^{(0)}(H_0) + d^{(1)}(H_1)$, then $d^{(i)}$ fits into the following commutative diagram:	
	\[
	\begin{tikzcd}[column sep=2cm]
		K_0^G(C^*(\cE)(V_i)) \ar[r,"\text{res}"] \ar[d,"\cong" left,"\sfX_{V_i}" right] & K_0^G(C^*(\cE)(V_0 \cap V_1)) \ar[d,"\cong" left, "\left.\sfX_{V_0}\right|_{V_0 \cap V_1}" right] \\
		K_0^G(C(V_i,\MF)) \ar[d,"\cong" left] \ar[r,"\sfX_{V_i}^{\text{op}} \otimes \sfX_{V_0}"] & K_0^G(C(V_0 \cap V_1,\MF)) \ar[d,"\cong" left] \\
		K_0^G(\MF) \ar[r, "d^{(i)}"] & K_0^{G}(C(G/\bT,\MF)) 
	\end{tikzcd}
	\]
	where the tensor product on the middle horizontal arrow is taken over $C^*(\cE)(V_0 \cap V_1)$ and the bimodules have to be restricted to $V_0 \cap V_1$. In the case $i = 0$ the bimodule $\sfX_{V_0}^{\text{op}} \otimes \sfX_{V_0}$ is trivial. Let $H_0 \in R(G) \subset K^G_0(\MF)$. Using the isomorphism $K_0^{G}(C(G/\bT,\MF)) \cong K_0^{\bT}(\MF)$ the element $d^{(0)}(H_0)$ agrees with the restriction of $H_0$ to $\bT$. This gives the first column of the matrix in the statement.
	
	Let $I = \{0,1\}$ and define $\sigma_{I} \colon V_I \to \cG$ by $\sigma_{I}(g) = (g,\omega_1,\omega_0)$. Note that this is well-defined and we have the following isomorphism of bimodules:
	\[
		\sfX_{V_1}^{\text{op}} \otimes \sfX_{V_0} \cong C(V_{I}, \sigma_{I}^*\cE)\ .
	\]
	All elements of $X_I$ have eigenvalues $\pm i$. Therefore over $X_I$ this bimodule restricts to continuous sections of the bundle with fibre $E_g \otimes \MF$, where
	\[
		E_{g} = F(\Eig{g}{i})\ ,
	\]
	i.e.\ it corresponds to taking the tensor product with the vector bundle $E \to G/\bT$, which is isomorphic to $F(L)$, where $L \to G/\bT$ is the canonical line bundle associated to the principal $\bT$-bundle $G \to G/\bT$. The fibre of $F(L)$ over $[e] \in G/\bT$ is the representation $F(\C)$, where $\bT$ acts on $\C$ by its defining representation. The proof of Lemma~\ref{lem:basis} gives a decomposition of $F(t) \in R(\bT)$ with respect to the basis $\beta$ in terms of $g_1(F)$ and $g_2(F)$. Together with the sign convention in the exact sequence this explains the second column.
\end{proof}

\begin{theorem} \label{thm:twisted_eq_K_of_SU2}
	Let $F$ be an exponential functor with $F(\C) \ncong F(\C^*)$ as $\bT$-representations. The equivariant higher twisted $K$-theory of $G = SU(2)$ with twist described by the Fell bundle~$\cE$ constructed from $F$ is given by
	\begin{align*}
		K^G_0(C^*(\cE)) & = 0\ , \\
		K^G_1(C^*(\cE)) & = R_F(SU(2))/J_F
	\end{align*}
	as $R_F(SU(2))$-modules, where $J_F$ is generated by the representation $\sigma_F$ with character $\chi_F$ given by
	\[
		\chi_F = \frac{1}{\Delta}\det\begin{pmatrix}
			F(t) & F(t^{-1}) \\
			1 & 1
		\end{pmatrix}
	\] 
	where $\Delta=t - t^{-1}$. In particular, note that $K^G_1(C^*(\cE))$ is always a quotient ring of $R_F(SU(2))$.
\end{theorem}

\begin{proof}
	First note that $\chi_F$ coincides with the polynomial $g_2(F) \in R_F(SU(2))$ from \eqref{eqn:coeff_F}. The localisation of an integral domain at any multiplicative subset continues to be an integral domain. By hypothesis we have $g_2(F) \neq 0$. Thus, the matrix representation of $d$ from Lemma~\ref{lem:differential} implies that $d$ is injective, which proves $K^G_0(C^*(\cE)) = 0$. The group $K^G_1(C^*(\cE))$ is isomorphic to the cokernel of $d$ and the matrix representation of $d$ implies that it has the claimed form. 
\end{proof}

\subsubsection{Explicit computations for $SU(2)$} We will conclude with a section containing some explicit computations. The case of the classical twist over $SU(2)$ at level $k \in \N$ corresponds to the choice
\[
	F = \left(\extp^{\textrm{top}}\right)^{\otimes k}
\]
In this situation we have $\MF \cong \C$. This implies 
\[
	R_F(SU(2)) \cong R(SU(2))\ .
\] 
Together with  
\[
	\chi_F = \frac{t^k - t^{-k}}{t - t^{-1}} = \rho_{k-1}
\] 
we obtain $K^G_1(C^*(\cE)) = R(SU(2))/(\rho_{k-1})$, ie.\ the Verlinde ring of $LSU(2)$ as expected.

To see what happens in the case of higher twists, let $b_1,\dots,b_k \in \N$ and $W_j = \C^{b_j}$. Consider 
\[
	F_j(V) = \bigoplus_{m \in \N_0} W_j^{\otimes m} \otimes \extp^m(V)\ .
\]
and define $F = F_1 \otimes \dots \otimes F_k$. By \cite[Sec.~2.2]{Pennig:2018} each $F_j$ is an exponential functor and so is~$F$. The character of the irreducible representation $\rho_m$ of $SU(2)$ with highest weight $m$ in $R(\bT)$ is 
\[
	t^{m} + t^{m-2} + ... + t^{-m+2} + t^{-m} = \frac{t^{m+1} - t^{-(m+1)}}{t - t^{-1}}\ .
\]
With $F(t) = F_1(t)\cdots F_k(t)$ we compute $\chi_F$ with $\chi_{F_i} = 1 + b_i\,t$ as follows: 
\begin{align*}
	\chi_F & = \frac{\prod_{i=1}^k (1+b_i\,t) - \prod_{i=1}^k (1+b_i\,t^{-1})}{t - t^{-1}} 
	= \sum_{\ell=1}^k\left(\sum_{ \genfrac{}{}{0pt}{3}{I \subset \{1,\dots,k\}}{\lvert I \rvert = \ell}} b_I\right)\rho_{\ell-1} 
\end{align*}
where $b_I$ is the product over all $b_i$ with $i \in I$. In case $b_1 = \dots = b_k = 1$, i.e.\ for $F(V) = \extp^*(V)^{\otimes k}$ we obtain 
\[
	\chi_F = \sum_{\ell=1}^k \binom{k}{\ell} \rho_{\ell-1}\ .
\]
Using the fusion rules for $SU(2)$ the corresponding rings can be computed explicitly by expressing the ideals in terms of $\rho = \rho_1$. For example,
\begin{align*}
	k = 3:& \qquad \chi_F = 3 + 3\rho_1 + \rho_2 = \rho^2  + 3\rho  + 2 = (\rho + 2)(\rho+1)\ ,\\
	k = 4:& \qquad \chi_F = 4 + 6\rho_1 + 4\rho_2 + \rho_3 = \rho(\rho + 2)^2\ ,\\	
	k = 5:& \qquad \chi_F = 5 + 10\rho_1 + 10\rho_2 + 5\rho_3 + \rho_4 = (\rho+2)^2(\rho^2 + \rho -1)\ ,\\	
	k = 6:& \qquad \chi_F = 6 + 15\rho_1 + 20\rho_2 + 15\rho_3 + 6\rho_4 + \rho_5 = (\rho+2)^3(\rho^2-1)\ .	
\end{align*}
Note that the element $\rho+2 = \extp^*(\rho)$ is a unit in $R_F(SU(2))$. For odd tensor powers of the full exterior algebra twist, it is possible to compute all of these polynomials explicitly. Let $k = 2m+1$ and consider
\[
	p_m(F) = \sum_{\ell=1}^{2m+1} \binom{2m+1}{\ell} \rho_{\ell-1}\ .
\]
With $\rho_1 = \rho$ and $\rho_0 = 1$ tensor products of irreducible representations $\rho_i$ of $SU(2)$ decompose as follows
\[
	\rho \cdot \rho_i = \rho_{i-1} + \rho_{i+1}\ .
\]

\begin{lemma} 
	Let $0 \leq \ell \leq m$. In the ring $R(SU(2))$ we have 
	\[
		(\rho_m + \rho_{m-1})(\rho+2)^\ell = \sum_{k=-(\ell+1)}^\ell \binom{2\ell+1}{k+\ell+1} \rho_{m+k}
	\]
	(where we define $\rho_{-1} = 0$).
\end{lemma}
\begin{proof}
	The statement is true for $\ell = 0$. Now assume that it is true for $0, \dots, \ell$ with $\ell \leq m-1$ and note that
	\begin{align*}
		& (\rho_m + \rho_{m-1})(\rho + 2)^{\ell+1} = \left(\sum_{k=-(\ell+1)}^\ell \binom{2\ell+1}{k+\ell+1} \rho_{m+k}\right)(\rho + 2)\\
		=& \sum_{k=-(\ell+1)}^\ell \binom{2\ell+1}{k+\ell+1}(\rho_{m+k-1} + \rho_{m+k+1}) + \sum_{k=-(\ell+1)}^\ell 2\binom{2\ell+1}{k+\ell+1} \rho_{m+k} \\
		=& \sum_{k=-((\ell+1)+1)}^{\ell+1} \left[\binom{2\ell+1}{k+\ell} + 2\binom{2\ell+1}{k+\ell+1} + \binom{2\ell+1}{k+\ell+2}\right]\rho_{m+k}
	\end{align*}
	Now since $\binom{N+2}{K+2} = \binom{N}{K} + 2\binom{N}{K+1} +\binom{N}{K+2}$ by Pascal's triangle, the last sum agrees with 
	\[
		\sum_{k=-((\ell+1)+1)}^{\ell+1}\binom{2(\ell+1)+1}{k+(\ell+1)+1} \rho_{m+k}\ ,
	\]
	which proves the statement.
\end{proof}

For $m = \ell$ we immediately obtain the following corollary by shifting the summation index appropriately:

\begin{corollary}
	The polynomial $p(F)$ factors as follows:
	\[
		p_m(F) = (\rho_m + \rho_{m-1})(\rho + 2)^m\ .
	\]
\end{corollary}
To understand the quotient ring $R_F(SU(2))/(p_m(F))$ it is beneficial to choose a new set of generators $\nu_k$ with $\nu_0 = 1$ and
\[
	\nu_k = \rho_k + \rho_{k-1}\ ,
\]
which satisfy $\nu_k \cdot \nu_1 = \nu_{k+1} + \nu_k + \nu_{k-1}$. In the quotient ring the relation $\nu_m = 0$ holds, and $\nu_1 + 1$ is a unit due to the localisation. Surprisingly, the localisation turns out to be redundant as the following lemma shows:
\begin{lemma} \label{lem:fusion_ring_SU2}
	The inverse of $\nu_1 + 1$ in the ring $R = R_F(SU(2))/(p_m(F))$ is given by the element
	\[
		x = \sum_{i=0}^{m-1} (-1)^i(m-i)\,\nu_i = m + \sum_{i=1}^{m-1} (-1)^i(m-i)\,\nu_i\ .
	\]
	Since $x \in R(SU(2))$ we have $R(SU(2))/(\nu_m) \cong R$. 
\end{lemma}

\begin{proof}
	First note that
	\begin{align*}
		(\nu_1 + 1)x = \nu_1x + x = m\nu_1 + m + \sum_{i=1}^{m-1} (-1)^i (m-i) \left[\nu_{i-1} + 2\nu_i + \nu_{i+1}\right]\ .
	\end{align*}
	Shifting the summations appropriately we end up with
	\begin{align*}
		 & m\nu_1 + m - (m-1) + (m-2)\nu_1 + (-1)^{m-1} 2\nu_{m-1} - 2(m-1)\nu_1  \\
		- & (-1)^{m-1} 2\nu_{m-1} + \sum_{j = 2}^{m-2} (-1)^j \left( -(m-j-1) + 2(m-j) - (m-j+1) \right)\nu_j 
	\end{align*}
	which gives $1$ after cancelling all terms.
\end{proof}

We note that the case $k=5$ reproduces the Yang-Lee fusion rules as shown by our observations above.
\begin{corollary}
	The equivariant higher twisted $K$-theory of $SU(2)$ with twist given by the exponential functor $F=\left(\extp^*\right)^{\otimes 5}$ satisfies
	\[
		K^G_1(C^*(\cE)) \cong \Z \oplus \Z 
	\] 
	with basis $\{1,x\}$, where $x$ is the class of $-\rho$. It carries a ring structure given by the Yang-Lee fusion rules
	\(
		x^2 = x + 1
	\).
\end{corollary}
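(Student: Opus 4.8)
The plan is to reduce the corollary to an elementary computation in $\Z[\rho]$. By Theorem~\ref{thm:twisted_eq_K_of_SU2} we already know that $K^G_0(C^*(\cE)) = 0$ and that $K^G_1(C^*(\cE)) \cong R_F(SU(2))/(g_2(F))$ as a ring, and the hypothesis of that theorem is satisfied here since $F(\C)$ and $F(\C^*)$ have $\bT$-characters $(1+t)^5$ and $(1+t^{-1})^5$, which are distinct Laurent polynomials. Because $F = \left(\extp^*\right)^{\otimes 5}$ sends $\C^2$ to $\left(\extp^*\C^2\right)^{\otimes 5} \cong (\rho \oplus \C^2)^{\otimes 5}$, whose $SU(2)$-character is $(\rho+2)^5$, we have $R_F(SU(2)) \cong \Z[\rho][(\rho+2)^{-1}]$, and in particular $\rho+2$ is a unit. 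Using the factorisation $g_2(F) = (\rho+2)^2(\rho^2+\rho-1)$ recorded in the Example above, the principal ideal $(g_2(F))$ coincides with $(\rho^2+\rho-1)$, so $K^G_1(C^*(\cE)) \cong \Z[\rho][(\rho+2)^{-1}]/(\rho^2+\rho-1)$.

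The next step is to remove the localisation. Localisation commutes with quotients, so this ring is the localisation of $\Z[\rho]/(\rho^2+\rho-1)$ at the image of $\rho+2$; but that image is already invertible, since $(\rho+2)(1-\rho) = -(\rho^2+\rho-1)+1 \equiv 1$. Hence $K^G_1(C^*(\cE)) \cong \Z[\rho]/(\rho^2+\rho-1)$ as a ring. The defining relation is monic of degree~$2$, so the underlying abelian group is free of rank~$2$ with basis $\{1,\rho\}$; putting $x := [-\rho]$ is a unimodular change of basis, so $\{1,x\}$ is a $\Z$-basis as well, giving $K^G_1(C^*(\cE)) \cong \Z \oplus \Z$, and $\Z[\rho]/(\rho^2+\rho-1) = \Z[x]/(x^2-x-1)$. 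Reading off the relation yields $x^2 = x+1$, the Yang--Lee fusion rules, with $x = [-\rho]$ as claimed.

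Every step here is routine, so there is no real obstacle. The single point that deserves an explicit line is the identification ``localise at $F(\rho)$, then quotient by $(\rho^2+\rho-1)$'' $=$ ``just quotient by $(\rho^2+\rho-1)$'', which rests entirely on the one-line verification that $\rho+2$ (equivalently $F(\rho) = (\rho+2)^5$) maps to a unit of $\Z[\rho]/(\rho^2+\rho-1)$. One should also note that the ring structure used in the statement is precisely the quotient-ring structure on $R_F(SU(2))/(g_2(F))$ furnished by Theorem~\ref{thm:twisted_eq_K_of_SU2}, so no separate argument for multiplicativity of the isomorphism is required.
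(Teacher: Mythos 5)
Your proof is correct and follows essentially the same route as the paper's: both reduce to $R_F(SU(2))/(g_2(F))$, use the factorisation $g_2(F)=(\rho+2)^2(\rho^2+\rho-1)$, and observe that the identity $(\rho+2)(1-\rho)=1$ already holds in $\Z[\rho]/(\rho^2+\rho-1)$, so the localisation is redundant. You spell out the commuting of localisation with quotients and the unimodular change of basis $x=-\rho$ slightly more explicitly than the paper does, but the argument is the same.
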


\begin{remark}\label{rem:fusion_rules}
	In general the fusion rules for the ring $R$ from Lemma~\ref{lem:fusion_ring_SU2} with respect to the generator $\nu_1$ are represented by the following tadpole diagram:
	\[
		\begin{tikzpicture}[	every loop/.style={}]  
			\draw[black,fill=black] (0,0) circle (2pt) node[below] {$\nu_0$};
			\draw[black,fill=black] (1,0) circle (2pt) node[below] {$\nu_1$};
			\draw[black,fill=black] (2,0) circle (2pt) node[below] {$\nu_2$};
			\draw[black,fill=black] (3,0) circle (2pt) node[below] {$\nu_3$};
			\draw[black,fill=black] (6,0) circle (2pt) node[below] {$\nu_{m-1}$};
			\draw[black] (0,0) -- (1,0) -- (2,0) -- (3,0) -- (4,0);
			\draw[black,dotted] (4,0) -- (5,0);
			\draw[black] (5,0) -- (6,0);
			
			\draw[black,scale=3,xshift=0.33cm] (0,0) to[out=40,in=140,loop] (0,0);
			\draw[black,scale=3,xshift=0.666cm] (0,0) to[out=40,in=140,loop] (0,0);
			\draw[black,scale=3,xshift=1cm] (0,0) to[out=40,in=140,loop] (0,0);
			\draw[black,scale=3,xshift=2cm] (0,0) to[out=40,in=140,loop] (0,0);
		\end{tikzpicture}
	\]
	This graph is reminiscent of the following: Let $\rho_0 = \id{}, \rho_2, \dots, \rho_{2(m-1)}$ be the objects in the even part of the fusion category for the loop group of $SU(2)$ at level $2m+1$. The fusion rules with respect to the generator $\rho_2$ also produce the above tadpole graph. 
\end{remark}

\subsection{The case $SU(3)$}
The group $G = SU(3)$ has rank $\ell = 2$. Let $F$ be an exponential functor and let $\rho$ be the standard representation of $G$ on $\C^3$. Consider the following localisations of representation rings:
\begin{align*}
	R_F(G_I) &= R(G_I)\left[F(\left.\rho\right|_{G_I})^{-1}\right] \ .
\end{align*}
For $\lvert I \rvert = 1$ we have $G_I = SU(3)$ and we will denote $G_{\{i\}}$ by $G_i$. In case $\lvert I \rvert = 2$ the group $G_I$ is isomorphic to $U(2)$ and the choice of $I$ determines an embedding $U(2) \subset SU(3)$. If $I = \{0,1,2\}$, then $G_I$ is the subgroup of all diagonal matrices, which is our choice of maximal torus $\bT^2 \subset SU(3)$. The $E_1$-page of the spectral sequence from Prop.~\ref{prop:spectral_seq} vanishes in odd rows and has the following chain complex in the even rows:
\begin{equation} \label{eqn:chain_cplx}
	\begin{tikzcd}[column sep=1.5cm]
		R_F(SU(3))^3 \ar[r,"d_0" above] & R_F(U(2))^3 \ar[r,"d_1" above] & R_F(\bT^2)
	\end{tikzcd}	
\end{equation}
The generators for the representation rings are chosen as follows: 
\begin{align*}
	R(SU(3)) &\cong \Z[s_1,s_2] \ ,\\
	R(U(2)) &\cong \Z[s,d,d^{-1}] \ ,\\
	R(\bT^2) &\cong \Z[t_1^{\pm 1},t_2^{\pm 1},t_3^{\pm 1}]/(t_1t_2t_3 - 1)\ ,	
\end{align*}
where $s_1 = \rho$ is the standard representation of $SU(3)$, $s_2 = \extp^2s_1$, $s$ denotes the standard representation of $U(2)$ on $\C^2$ and $d$ is its determinant representation. The characters $t_i$ are obtained by restricting the standard representation of $SU(3)$ to the maximal torus $\bT^2$ and projecting to the $i$th diagonal entry. Let $r \colon R_F(SU(3)) \to R_F(U(2))$ be the restriction homomorphism\footnote{The three inclusions $G_I \subset G$ for $\lvert I \rvert = 2$ induce the same map on representation rings.}. Then we have
\begin{align*}
	r(s_1) &= s + d^{-1} \ ,\\
	r(s_2) &= d^{-1}s + d\ .
\end{align*}
Let $\lambda_F = F(d^{-1})$ and $\mu_F = F(s)$. Note that $F(s + d^{-1})= F(s) \cdot F(d^{-1})$ is a unit in $R_F(U(2))$. Hence, the same is true for $\lambda_F,\mu_F \in R_F(U(2))$. To express the differential $d_0$ in terms of the $r$, $\lambda_F$ and $\mu_F$ we first need to give an explicit description of the map $q \colon SU(3) \to \Delta^2$: The eigenvalues of each $g \in SU(3)$ can be uniquely written in the form 
\[
	\exp(2\pi i \kappa_0),\quad \exp(2\pi i \kappa_1),\quad \exp(2\pi i \kappa_2)\ ,
\]
where $\kappa_0, \kappa_1, \kappa_2 \in \R$ satisfy $\sum_{j=0}^2 \kappa_j = 0$ and
\[
	\kappa_0 \geq \kappa_1 \geq \kappa_2 \geq \kappa_0 - 1\ .
\] 
Let $\mu_1 = \text{diag}\left(\tfrac{2}{3}, -\tfrac{1}{3}, -\tfrac{1}{3}\right) \in \liet$ and $\mu_2 = \text{diag}\left(\tfrac{1}{3}, \tfrac{1}{3}, -\tfrac{2}{3}\right) \in \liet$ be the (duals of the) fundamental weights. For any triple $\left(\kappa_0, \kappa_1, \kappa_2\right)$ as above, there are unique values $s,t \geq 0$ with $s + t \leq 1$ and 
\begin{equation} \label{eqn:kappa}
	\text{diag}\left(\kappa_0, \kappa_1, \kappa_2\right) = s\,\mu_1 + t\,\mu_2\ .
\end{equation}
The map $q \colon SU(3) \to \Delta^2$ sends $g \in SU(3)$ to the point $(1 - (s+t), s,t)$ in the simplex, i.e.\ if $\{e_0, e_1, e_2\}$ denotes the standard basis of $\R^3$, then $q(\exp(2\pi i \mu_j))$ agrees with $e_j$. We choose $\delta_n = \frac{17}{24}$ as the constant for the closed cover of $\Delta^2$ given by $A_0, A_1, A_2$. The result is shown in Fig.~\ref{fig:cover_of_Delta}. 
\begin{figure}[h]
\centering

\begin{tikzpicture}[scale=3.8]
	\coordinate (0) at (0,0);
	\coordinate (alpha2) at (0,{sqrt(2)});
	\coordinate (alpha1) at ({sqrt(2)*sin(120)}, {sqrt(2)*cos(120)});
	\coordinate (alpha3) at ($(alpha1)+(alpha2)$);
	
	\coordinate (mu0) at ($1/3*(alpha1) + 2/3*(alpha2)$);
	\coordinate (mu1) at ($2/3*(alpha1) + 1/3*(alpha2)$);
	
	\draw[blue!10,fill=blue!10] (0,0) -- ($(mu0)$) -- ($(mu1)$) -- cycle;
	\draw[blue,fill=blue!50,opacity=0.3] (0,0) -- ($17/24*(mu0)$) -- ($17/24*(mu1)$) -- cycle;
	\draw[blue,fill=red!50,opacity=0.3] ($(mu0)$) -- ($7/24*(mu0)$) -- ($17/24*(mu1) + 7/24*(mu0)$) -- cycle;
	\draw[blue,fill=green!50,opacity=0.3] ($(mu1)$) -- ($7/24*(mu1)$) -- ($17/24*(mu0) + 7/24*(mu1)$) -- cycle;

	\draw[blue] ($(mu0)$) -- ($(mu1)$);
	\draw[blue] (0,0) -- ($(mu0)$);
	\draw[blue] (0,0) -- ($(mu1)$);
	
	\draw[fill=blue, blue, thick] (0,0) circle (0.3pt) node[below,black] {$0$};
	\draw[fill=blue, blue, thick] ($(mu0)$) circle (0.3pt) node[above,black] {$2$};
	\draw[fill=blue, blue, thick] ($(mu1)$) circle (0.3pt) node[below,black] {$1$};

	\node[blue] at ($3/20*(mu0) + 3/20*(mu1)$) {$A_0$};
	\node[red] at ($14/20*(mu0) + 3/20*(mu1)$) {$A_2$};
	\node[green!70!blue!100] at ($3/20*(mu0) + 14/20*(mu1)$) {$A_1$};
	
\end{tikzpicture}

\caption{\label{fig:cover_of_Delta}The three closed sets $A_i$ covering $\Delta^2$.}	
\end{figure}

To express the differential $d_1$ in terms of the representation rings, we first observe that we have three inclusions $\iota_{I} \colon G_{\{0,1,2\}} \to G_I$ for $I \subset \{0,1,2\}$ with $\lvert I \rvert = 2$. These induce three restriction maps 
\[
	r_I \colon R_F(G_I) \to R_F(G_{\{0,1,2\}}) \cong R_F(\bT^2)\ . 
\]
Let $\nu_F = F(t_1)$ for $t_1 \in R(\bT^2) \subset R_F(\bT^2)$ as defined above. In the next lemma we write $r_{ij}$ for $r_I$ with $I = \{i,j\}$.

\begin{lemma} \label{lem:diff_d0_d1}
The trivialisations $R_F(G_I) \cong K^G_0(C^*(\cE)(X_I))$ in the spectral sequence can be chosen in such a way that the differential $d_0$ is given by the following expression
\begin{align*}
	 & d_0(x_0,x_1,x_2) \\
	=& (-r(x_0) + \lambda_F \cdot r(x_1), -r(x_1) + \mu_F^{-1} \cdot r(x_2), -r(x_0) + \lambda_F^{-1}\cdot r(x_2))	
\end{align*}
where $x_i \in R_F(G_i) = R_F(SU(3))$ and the three components on the right hand side correspond to the subsets $I = \{0,1\}$, $\{1,2\}$ and $\{0,2\}$ respectively. Moreover, $d_1$ takes the following form
\[
	d_1(y_{01},y_{12},y_{02}) = r_{01}(y_{01}) + \nu_F \cdot r_{12}(y_{12}) - r_{02}(y_{02}) \ ,
\]
where $y_{ij} \in R_F(G_{\{i,j\}})$.
\end{lemma}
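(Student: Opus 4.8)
The plan is to carry out, in greater detail, the computation made for $SU(2)$ in the proof of Lemma~\ref{lem:differential}. Recall from the proof of Cor.~\ref{cor:spectral_seq} that the even rows of the $E_1$-page are the \v{C}ech complex of the closed cover $(V_i)_{i\in\{0,1,2\}}$ with the group $K_0^G(C^*(\cE)(V_I))$ in bidegree $(|I|-1,\text{even})$, and that the identification $K_0^G(C^*(\cE)(V_I))\cong K_0^{G_I}(\MF)\cong R_F(G_I)$ comes from the deformation retract $G/G_I\hookrightarrow V_I$ of Lemma~\ref{lem:GmodGI} together with a choice of Morita equivalence as in Lemma~\ref{lem:equiv_trivialisation}. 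The differentials $d_0$ and $d_1$ are the alternating sums of the restriction maps $K_0^G(C^*(\cE)(V_I))\to K_0^G(C^*(\cE)(V_J))$ for $I\subset J$; the content of the lemma is that, read through the chosen trivialisations, these restrictions differ from the naive restriction homomorphisms $r$ of representation rings by a multiplicative twist coming from the transition bimodule between the trivialisation over $V_I$ and the one over $V_J$, exactly as in diagram \eqref{eqn:diag_d}.

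First I would fix explicit continuous sections $\sigma_i\colon V_i\to Y$, $\sigma_i(g)=(g,\omega_i)$, for constants $\omega_0,\omega_1,\omega_2\in\bT\setminus\{1\}$. The estimate used for $SU(2)$ applies: using the description of $q$ above and the value $\delta_n=\tfrac{17}{24}$, the eigenvalues of any $g\in V_i=q^{-1}(A_i)$ stay inside a proper sub-arc of $\bT$, so such $\omega_i$ exist, and one checks they may be taken with $\omega_1<\omega_0<\omega_2$ in the order on $Z\cong(0,1)$ from Sec.~\ref{subsec:Fell_bundle}. These give Morita equivalences $\sfX_{V_i}$; over each double and triple intersection I would use the restriction of the $\sfX_{V_i}$ with the smallest index $i$ as the reference trivialisation. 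As in the proof of Lemma~\ref{lem:differential}, for $I\subset J$ the bimodule comparing the reference trivialisations over $V_I$ and $V_J$, restricted to the retract $G/G_J$, is $C(G/G_J,\sigma^*\cE)$ for a suitable section $\sigma$, and over $G/G_J$ this is isomorphic to tensoring by $F(E_{IJ})^{\pm 1}$, where $E_{IJ}\to G/G_J$ is the $G$-equivariant subbundle of $\C^3$ whose fibre over $g$ is the sum of the eigenspaces of $g$ with eigenvalue in the open arc of $Z$ between the two relevant cut points, and the sign of the exponent is that of the difference of those two cut points (its determination is fixed by matching the $SU(2)$ case).

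Next I would identify the representations $F(E_{IJ})$ by computing the barycentres $\xi_I$ and the eigenvalue decompositions of $\exp(\xi_I)$. For $|J|=2$ one has $\C^3=L\oplus P$ with $L$ a line and $P$ a plane, and with the conventions of the paper ($s$ the standard $U(2)$-representation, $d=\det s$) these are $d^{-1}$ and $s$ as $G_J\cong U(2)$-representations, consistently with $r(s_1)=s+d^{-1}$; checking which eigenvalue of $\exp(\xi_J)$ lies in the relevant arc gives the transition $\lambda_F=F(d^{-1})$ for $J=\{0,1\}$ (cut points in the ``positive'' order, so $\lambda_F$ itself), $\mu_F=F(s)$ for $J=\{1,2\}$ (``negative'' order, so $\mu_F^{-1}$), and $\lambda_F=F(d^{-1})$ for $J=\{0,2\}$ (``negative'', so $\lambda_F^{-1}$). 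Feeding these into the semi-simplicial \v{C}ech differential, whose signs are those of the face maps in Cor.~\ref{cor:spectral_seq} ($-1,+1$ on a $1$-simplex), produces the stated formula for $d_0$. For $d_1$ the same bookkeeping over $G/\bT^2=G/G_{\{0,1,2\}}$ shows that, with the reference trivialisations chosen as above, only the $\{1,2\}$-term acquires a non-trivial transition, namely $F$ of the $\bT^2$-weight space sitting in the arc between $\omega_0$ and $\omega_1$, which is the $t_1$-line, giving the factor $\nu_F=F(t_1)$; together with the $2$-simplex signs $(+1,-1,+1)$ this yields $d_1(y_{01},y_{12},y_{02})=r_{01}(y_{01})+\nu_F\cdot r_{12}(y_{12})-r_{02}(y_{02})$.

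The main obstacle is organisational rather than conceptual: one has to make the choices of the cut points $\omega_i$ and of the reference trivialisations over all seven inclusions $I\subsetneq J$ mutually compatible, fix the semi-simplicial orientation conventions once and for all (so the signs $-r(x_0)$, $-r(x_1)$ and the $(+,+,-)$ pattern of $d_1$ are unambiguous), and track the direction of each comparison to obtain the correct exponents $\lambda_F^{\pm 1}$, $\mu_F^{-1}$. Anchoring everything to the single linear order $\omega_1<\omega_0<\omega_2$, chosen compatibly with the positions of the eigenvalue clusters of $V_0,V_1,V_2$ read off from Fig.~\ref{fig:cover_of_Delta}, forces every transition bundle, after which the two formulae follow by exactly the local computations of the $SU(2)$ case.
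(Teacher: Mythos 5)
Your proposal matches the paper's proof essentially step for step: you use Lemma~\ref{lem:equiv_trivialisation} to build explicit Morita trivialisations over the $V_i$ from sections $\sigma_i(g)=(g,\omega_i)$ with $\omega_1<\omega_0<\omega_2$, use the minimal index to pick the reference trivialisation over each intersection, identify the transition bimodule with $F$ applied to the eigenbundle between the relevant cut points, and read off the representations ($d^{-1}$, $s$, $t_1$) together with the sign of the exponent from the order of the cut points. This is exactly what the paper does (with the explicit choices $\omega_0=-1$, $\omega_1=e^{2\pi i/6}$, $\omega_2=e^{2\pi i\cdot 5/6}$ and the eigenvalue table in Fig.~\ref{tab:factor_table}); the only thing you leave slightly implicit is the concrete estimate, using $\delta_n=\tfrac{17}{24}$ and the coordinates $(\kappa_0,\kappa_1,\kappa_2)$, that the sets $A_i$ avoid the loci of matrices with an eigenvalue at $\omega_i$, which the paper spells out for $V_0$, but you correctly note this is the same estimate as in the $SU(2)$ case.
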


\begin{proof}
As above we write $X_i$ for $X_{\{i\}}$ and similarly for $G_i$ and $V_i$. Observe that $G_{i} = G$ implies that $X_{i}$ is a one-point space for $i \in \{0,1,2\}$. We will first discuss the construction of the differential $d_0$. Restriction along the $G$-equivariant homotopy equivalence $X_{i} \to V_{i}$ induces an isomorphism   
\[
	\begin{tikzcd}
		K_0^G(C^*(\cE)(V_{i})) \ar[r,"\cong" above] & K_0^G(C^*(\cE)(X_{i}))\ .
	\end{tikzcd}
\]
The differential $d_0$ is an alternating sum of restriction homomorphisms along the inclusions of the form $V_{\{i,j\}} \to V_{k}$ with $k \in \{i,j\}$ composed with isomorphisms as shown in the following diagram:
\[
	\begin{tikzcd}
		R_F(G_k) \ar[r,"\cong"] & K_0^G(C^*(\cE)(X_{k})) & \ar[l,"\cong" above] K_0^G(C^*(\cE)(V_{k})) \ar[d] \\ 
		R_F(G_{\{i,j\}}) \ar[r,"\cong"] & K_0^G(C^*(\cE)(X_{\{i,j\}})) & \ar[l,"\cong" above] K_0^G(C^*(\cE)(V_{\{i,j\}}))
	\end{tikzcd}
\]
We will fix the isomorphisms on the left hand side by choosing an equivariant trivialisation of $C^*(\cE)(V_k)$ via Morita equivalences given by Lemma~\ref{lem:equiv_trivialisation}. Let
\begin{align*}
	\omega_0 = -1 \ ,\qquad
	\omega_1 = \exp\left(2\pi i \tfrac{1}{6}\right)\ ,\qquad
	\omega_2 = \exp\left(2\pi i \tfrac{5}{6}\right)
\end{align*}
and define $\sigma_k \colon V_k \to Y$ by $\sigma_k(g) = (g,\omega_k)$. We claim that this is well-defined and will show this for $k = 0$. The other cases follow similarly. To see that $\omega_0 \notin \EV{g}$ for all $g \in V_0$ it suffices to prove that all coordinates $\kappa_i$ of $q(g)$ are different from $\pm \frac{1}{2}$ for all $g \in V_0$. By \eqref{eqn:kappa} we have
\[
	\kappa_0 = \frac{2}{3}s + \frac{1}{3}t = \frac{1}{2} \qquad \Leftrightarrow \qquad s = \frac{3 - 2t}{4}
\]
and the condition $s+t \leq 1$ implies that $0 \leq t \leq \frac{1}{2}$. Likewise, 
\[
	\kappa_2 = -\frac{1}{3}s' - \frac{2}{3}t' = -\frac{1}{2} \qquad \Leftrightarrow \qquad s' = \frac{3 - 4t'}{2}
\]
and $s'+t' \leq 1$, $s' \geq 0$ yield the constraints $\frac{1}{2} \leq t' \leq \frac{3}{4}$. Note that the coordinate $\kappa_1$ is never equal to $\pm \frac{1}{2}$, since this would contradict the constraints imposed by $\kappa_0 \geq \kappa_1 \geq \kappa_2 \geq \kappa_0 -1$ and $\kappa_0 + \kappa_1 + \kappa_2 = 0$. Therefore the matrices with at least one eigenvalue equal to $-1$ are parametrised by the two line segments described above and shown in Fig.~\ref{fig:triv_over_V0}. Our choice for $\delta_n$ was made in such a way that the resulting $A_0$ avoids this set proving our claim for $V_0$. The situation will look similar for $V_1$ and $V_2$ insofar as Fig.~\ref{fig:triv_over_V0} just has to be rotated accordingly.
\begin{figure}[h]
\centering

\begin{tikzpicture}[scale=3.2]
	\coordinate (0) at (0,0);
	\coordinate (alpha2) at (0,{sqrt(2)});
	\coordinate (alpha1) at ({sqrt(2)*sin(120)}, {sqrt(2)*cos(120)});
	\coordinate (alpha3) at ($(alpha1)+(alpha2)$);
	
	\coordinate (mu0) at ($1/3*(alpha1) + 2/3*(alpha2)$);
	\coordinate (mu1) at ($2/3*(alpha1) + 1/3*(alpha2)$);
	
	\draw[blue!10,fill=blue!10] (0,0) -- ($(mu0)$) -- ($(mu1)$) -- cycle;
	\draw[blue,fill=blue!40,opacity=0.3] (0,0) -- ($17/24*(mu0)$) -- ($17/24*(mu1)$) -- cycle;

	\draw[blue] ($(mu0)$) -- ($(mu1)$);
	\draw[blue] (0,0) -- ($(mu0)$);
	\draw[blue] (0,0) -- ($(mu1)$);
	
	\draw[fill=blue, blue, thick] (0,0) circle (0.3pt) node[below,black] {$0$};
	\draw[fill=blue, blue, thick] ($(mu0)$) circle (0.3pt) node[above,black] {$2$};
	\draw[fill=blue, blue, thick] ($(mu1)$) circle (0.3pt) node[below,black] {$1$};
	
	\draw[red, very thick] ($3/4*(mu1)$) -- ($1/2*(mu0) + 1/2*(mu1)$);
	\draw[red, very thick] ($3/4*(mu0)$) -- ($1/2*(mu0) + 1/2*(mu1)$);
	
	\node[blue] at ($1/5*(mu0) + 1/5*(mu1)$) {$A_0$};
\end{tikzpicture}

\caption{\label{fig:triv_over_V0}The red lines correspond to $SU(3)$ elements with at least one eigenvalue equal to~$-1$. As can be seen from this picture the set $A_0$ avoids those two lines.}	
\end{figure}

By Lem.~\ref{lem:equiv_trivialisation} the section $\sigma_k$ constructed above gives an equivariant Morita equivalence $\sfX_{V_k}$ between $C^*(\cE)(V_k)$ and $C(V_k, \MF)$. Let $I \subset \{0,1,2\}$ and denote the minimal element of $I$ by $i_0$. The restriction of $\sfX_{V_{i_0}}$ to $V_I$ is a Morita equivalence between $C^*(\cE)(V_I)$ and $C(V_I,\MF)$ and the trivialisation $R_F(G_I) \cong K_0^G(C^*(\cE)(X_I))$ is induced by the restricting further to $X_I \subset V_I$. The differential $d_0$ is a signed sum of components of the form
\[
	d_k^I \colon K^G_0(\MF) \to K_0^G(C(G/G_I,\MF)) 
\] 
with $k \in I$. Just as in Lem.~\ref{lem:differential} the $d_k^I$ fits into the following commutative diagram:
	\[
	\begin{tikzcd}[column sep=2cm]
		K_0^G(C^*(\cE)(V_k)) \ar[r,"\text{res}"] \ar[d,"\cong" left,"\sfX_{V_k}" right] & K_0^G(C^*(\cE)(V_I)) \ar[d,"\cong" left, "\sfX_{V_{i_0}}" right] \\
		K_0^G(C(V_k,\MF)) \ar[d,"\cong" left] \ar[r,"\sfX_{V_k}^{\text{op}} \otimes \sfX_{V_{i_0}}"] & K_0^G(C(V_I,\MF)) \ar[d,"\cong" left] \\
		K_0^G(\MF) \ar[r, "d_k^{I}"] & K_0^{G}(C(G/G_I,\MF)) 
	\end{tikzcd}
	\]
From this we see that if $I = \{i,j\}$ with $i < j$ and $k = i$, then after identifying the domain of $d_k^I$ with $R_F(G)$ and the codomain with $R_F(G_I)$ the map agrees with the restriction homomorphism. Let $I = \{i,j\}$ with $i < j$ and define $\sigma_I \colon V_{I} \to Y$ by $\sigma_I(g) = (g,\omega_j, \omega_i)$. In this situation we have
\[
	\sfX_{V_j}^{\text{op}} \otimes \sfX_{V_{i}} \cong C(V_I, \sigma_I^*\cE)\ .
\] 
Let $E \to V_I$ be the vector bundle with fibre over $g \in V_I$ given by
\[
	E_g = F(\Eig{g}{\lambda})
\]
where $\lambda \in \EV{g}$ is the eigenvalue of $g$ between $\omega_j$ and $\omega_i$. Then $\sigma_I^*\cE \to V_I$ is either of the form $E \otimes \MF$ if $\omega_j < \omega_i$ or $(E \otimes \MF)^{\text{op}}$ if $\omega_i < \omega_j$. Note that $\left.E\right|_{X_I} \cong F(Q)$, where $Q \to X_I$ is the vector bundle associated to the principal $G_I$-bundle $G \to G/G_I$ either using the inverse determinant representation~$d^{-1}$ or the standard representation $s$ depending on whether $\dim(\Eig{g}{\lambda}) = 1$ or $2$ respectively. Using the identifications $K_0^G(\MF) \cong R_F(G)$ and $K_0^G(C(X_I,\MF)) \cong R_F(G_I)$, the map $d_j^I$ therefore corresponds to a factor of the form $F(d^{-1})^{\pm 1}$ or $F(s)^{\pm 1}$ times the restriction homomorphism. The resulting factors are listed in Fig.~\ref{tab:factor_table}. 
\begin{figure}[ht]
\centering
\renewcommand{\arraystretch}{1.5}
\begin{tabular}{|c|c|c|c|c|}
\hline
I & order & eigenvalues & representation & factor \\
\hline
$\{0,1\}$ & $\omega_1 < \omega_0$ & $\color{red} e(\frac{2}{6})$, $e(-\frac{1}{6})$, $e(-\frac{1}{6})$ & $d^{-1}$ & $F(d^{-1}) = \lambda_F$ \\
$\{1,2\}$ & $\omega_1 < \omega_2$ & $\color{red} e(\frac{1}{2})$, $1$, $\color{red} e(-\frac{1}{2})$ & $s$ & $F(s)^{-1} = \mu_F^{-1}$ \\
$\{0,2\}$ & $\omega_0 < \omega_2$ & $e(\frac{1}{6})$, $e(\frac{1}{6})$, $\color{red} e(-\frac{2}{6})$ & $d^{-1}$ & $F(d^{-1})^{-1} = \lambda_F^{-1}$ \\
\hline
\end{tabular}
\caption{\label{tab:factor_table}The table shows the eigenvalue $\lambda$ of $g \in X_I$ between $\omega_i$ and $\omega_j$ in red with $e(\varphi) = e^{2\pi i \varphi}$ and the resulting factor in the right hand column.}
\end{figure}
Together with the sign convention for the exact sequence this explains the form of $d_0$. 

The same reasoning can be used for $d_1$. Let $I \subset \{0,1,2\}$ be a subset with $\lvert I \rvert = 2$ and let $J = \{0,1,2\}$. The differential $d_1$ decomposes into a sum
\[
	d_1(x_{01}, x_{12}, x_{02}) = d^{\{0,1\}}(x_{01}) + d^{\{1,2\}}(x_{12}) - d^{\{0,2\}}(x_{02})
\] 
with three maps $d^I \colon K_0^G(C(X_I,\MF)) \to K_0^G(C(X_J,\MF))$ that fit into the following commutative diagram:
	\[
	\begin{tikzcd}[column sep=2cm]
		K_0^G(C(V_I,\MF)) \ar[d,"\cong" left] \ar[r,"\sfX_{V_I}^{\text{op}} \otimes \sfX_{V_{J}}"] & K_0^G(C(V_J,\MF)) \ar[d,"\cong" left] \\
		K_0^G(C(X_I,\MF)) \ar[r, "d^I"] & K_0^{G}(C(X_J,\MF)) 
	\end{tikzcd}
	\]
Just as above we see that $d^I$ agrees with the restriction homomorphism $r_I$ in the cases $I = \{0,1\}$ and $I = \{0,2\}$, since $\sfX_{V_I}^{\text{op}} \otimes \sfX_{V_{J}}$ is trivial then. The only remaining case is $I = \{1,2\}$, where we have   
\[
	\sfX_{V_I}^{\text{op}} \otimes \sfX_{V_{J}} \cong \sfX_{V_1}^{\text{op}} \otimes \sfX_{V_{0}} \cong C(V_J, \sigma_{\{0,1\}}^*\cE)
\]
The eigenvalues for all $g \in X_J$ are $e(\frac{1}{3})$, $1$, $e(-\frac{1}{3})$ and only the first one lies between $\omega_1$ and $\omega_0$. Let $\lambda = e(\frac{1}{3})$ and let $E \to X_J$ be the vector bundle with fibres given by 
\[
	E_g = F(\Eig{g}{\lambda})\ .
\] 
It is isomorphic to $F(Q)$, where $Q \to X_J$ is the vector bundle associated to the principal $G_J$-bundle $G \to G/G_J$ via the representation $t_1$. Thus, by the same argument as before the map $d^{\{1,2\}}$ agrees with $F(t_1)$ times the restriction homomorphism $r_{12}$. 
\end{proof}

\subsubsection{Restriction to maximal torus} As above, denote by $\liet$ the Lie algebra of the maximal torus $\bT^2 \subset SU(3)$. In this section we will prove that the chain complex in Lemma~\ref{lem:diff_d0_d1} computes the equivariant (Bredon) cohomology of $\liet$ with respect to an extended Weyl group action and a twisted coefficient system. This approach is reminiscent of the method used in \cite{AdemCantareroGomez-TwistedK:2018} to compute the (rational) twisted equivariant $K$-theory of actions with isotropy of maximal rank and classical twist. We will focus here on the action of $G = SU(3)$ on itself by conjugation with a non-classical twists. An extension of this approach to $G = SU(n)$ will be part of upcoming work.

Let $W = S_3$ be the Weyl group of $SU(3)$. Our identification of $\bT^2$ with the diagonal matrices induces a corresponding isomorphism  
\[
	\liet \cong \{ (h_1,h_2,h_3) \in \R^3 \ |\ h_1 + h_2 + h_3 = 0 \} \ .
\] 
The fundamental group $\pi_1(\bT,e)$ agrees with the lattice $\Lambda$ in $\liet$ obtained as the kernel of the exponential map. We will identify the two, which gives
\begin{equation} \label{eqn:lattice}
	\pi_1(\bT^2,e) = \Lambda = \{ (k_1,k_2,k_3) \in \Z^3 \ |\ k_1 + k_2 + k_3 = 0\}
\end{equation}
The Weyl group acts on $\liet$ and $\Lambda$ by permuting the coordinates and we define
\[
	\affW = \pi_1(\bT^2) \rtimes W\ .
\]
Note that $W$ also acts on $\Z^3$ in the same way. Let 
\(
	\extW = \Z^3 \rtimes W 
\)
and observe that $\affW \subset \extW$ as a normal subgroup. Given an exponential functor $F$ we obtain a group homomorphism
\[
	\varphi \colon \pi_1(\bT^2,e) \to GL_1(R_F(\bT^2)) \  , \  \varphi(k_1, k_2, k_3) = F(t_1)^{k_1} \cdot F(t_2)^{k_2} \cdot F(t_3)^{k_3}\ .
\]
If we define $F(-t_i) = F(t_i)^{-1}$ we can rewrite the right hand side as 
\[
	\varphi(k_1,k_2,k_3) = F(k_1t_1 + k_2t_2 + k_3t_3) \ .
\]
Combining $\varphi$ with the permutation action of $W$ on $R_F(\bT^2)$ results in an action of $\affW$ on $R_F(\bT^2)$. Just as in \cite{AdemCantareroGomez-TwistedK:2018} this gives rise to local coefficient systems $\cR$ and $\cRQ$ as follows
\[
	\cR(\affW/H) = R_F(\bT^2)^H \qquad , \qquad \cRQ(\affW/H) = R_F(\bT^2)^H \otimes \mathbb{Q}\ .
\] 
The simplex $\Delta^2 \subset \liet$ is a fundamental domain for the action of $\affW$ on $\liet$ and turns it into a $\affW$-CW-complex, in which the $k$-cells are labelled by subsets $I \subset \{0,1,2\}$ with $\lvert I \rvert = k+1$. We have three $0$-cells, three $1$-cells and one $2$-cell. Let $\affW_I$ be the stabiliser of $\xi_I$. Likewise let $W_I \subset W$ be the stabiliser of $\exp(2\pi i \xi_I)$. The restriction maps $R_F(G_I) \to R_F(\bT^2)$ induce ring isomorphisms 
\[
	r_I \colon R_F(G_I) \to R_F(\bT^2)^{W_I}\ .
\] 
As above let $q \colon G \to \Delta^2$ be the quotient map that parametrises conjugacy classes. Let $\hat{q} = \left.q\right|_{\bT^2} \circ q_{\liet}$, where $q_{\liet} \colon \liet \to \bT^2$ is the universal covering. Let $B_I = \hat{q}^{-1}(A_I) \subset \liet$. Note that $\{B_0, B_1, B_2\}$ is a $\affW$-equivariant cover of $\liet$ as shown in Fig.~\ref{fig:cover_of_t}. It has the property that the inclusion map $\affW \cdot \xi_I \to B_I$ is an equivariant homotopy equivalence.  

\begin{figure}[h]
\centering

\begin{tikzpicture}[scale=1.5]
	\coordinate (0) at (0,0);
	\coordinate (alpha2) at (0,{sqrt(2)});
	\coordinate (alpha1) at ({sqrt(2)*sin(120)}, {sqrt(2)*cos(120)});
	\coordinate (alpha3) at ($(alpha1)+(alpha2)$);
	
	\coordinate (mu0) at ($1/3*(alpha1) + 2/3*(alpha2)$);
	\coordinate (mu1) at ($2/3*(alpha1) + 1/3*(alpha2)$);

	\clip (-1.6,-1.4) rectangle + (3.2,2.8);
	
	\foreach \k in {-1,0,1}
	{
	\coordinate (origin) at ($+\k*(mu0) + \k*(mu1)$);
	\coordinate (mu0) at ($1/3*(alpha1) + 2/3*(alpha2)$);
	\coordinate (mu1) at ($2/3*(alpha1) + 1/3*(alpha2)$);
	
	\foreach \i in {0,60,...,300}
	{
	\draw[blue!10,fill=blue!10] let \p0 = ($(origin)$), \p1 = ($(mu0)+(origin)$), \p2 = ($(mu1)+(origin)$) in 
		[rotate around={\i:(origin)}] (\p0) -- (\p1) -- (\p2) -- cycle;
	\draw[blue,fill=blue!50,opacity=0.3] let \p0 = ($(origin)$), \p1 = ($17/24*(mu0)+(origin)$), \p2 = ($17/24*(mu1)+(origin)$) in
		[rotate around={\i:(origin)}] (\p0) -- (\p1) -- (\p2) -- cycle;
	}

	\foreach \i in {0,120,240}
	{
		\draw[blue,fill=red!50,opacity=0.3] let \p1 = ($(mu0)+(origin)$), \p2 = ($7/24*(mu0)+(origin)$), \p3 = ($17/24*(mu1) + 7/24*(mu0)+(origin)$) in [rotate around={\i:(origin)}] (\p1) -- (\p2) -- (\p3) -- cycle;		
		\draw[blue,fill=green!50,opacity=0.3] let \p1 = ($(mu1)+(origin)$), \p2 = ($7/24*(mu1)+(origin)$), \p3 = ($17/24*(mu0) + 7/24*(mu1)+(origin)$) in [rotate around={\i:(origin)}] (\p1) -- (\p2) -- (\p3) -- cycle; 
	}	

	\foreach \i in {60,180,300}
	{
		\draw[blue,fill=green!50,opacity=0.3] let \p1 = ($(mu0)+(origin)$), \p2 = ($7/24*(mu0)+(origin)$), \p3 = ($17/24*(mu1) + 7/24*(mu0)+(origin)$) in [rotate around={\i:(origin)}] (\p1) -- (\p2) -- (\p3) -- cycle;		
		\draw[blue,fill=red!50,opacity=0.3] let \p1 = ($(mu1)+(origin)$), \p2 = ($7/24*(mu1)+(origin)$), \p3 = ($17/24*(mu0) + 7/24*(mu1)+(origin)$) in [rotate around={\i:(origin)}] (\p1) -- (\p2) -- (\p3) -- cycle; 
	}
	}

	\foreach \k in {-1,1}
	{
	\coordinate (origin) at ($-\k*(mu0) + 2*\k*(mu1)$);
	\coordinate (mu0) at ($1/3*(alpha1) + 2/3*(alpha2)$);
	\coordinate (mu1) at ($2/3*(alpha1) + 1/3*(alpha2)$);
	
	\foreach \i in {0,60,...,300}
	{
	\draw[blue!10,fill=blue!10] let \p0 = ($(origin)$), \p1 = ($(mu0)+(origin)$), \p2 = ($(mu1)+(origin)$) in 
		[rotate around={\i:(origin)}] (\p0) -- (\p1) -- (\p2) -- cycle;
	\draw[blue,fill=blue!50,opacity=0.3] let \p0 = ($(origin)$), \p1 = ($17/24*(mu0)+(origin)$), \p2 = ($17/24*(mu1)+(origin)$) in
		[rotate around={\i:(origin)}] (\p0) -- (\p1) -- (\p2) -- cycle;
	}

	\foreach \i in {0,120,240}
	{
		\draw[blue,fill=red!50,opacity=0.3] let \p1 = ($(mu0)+(origin)$), \p2 = ($7/24*(mu0)+(origin)$), \p3 = ($17/24*(mu1) + 7/24*(mu0)+(origin)$) in [rotate around={\i:(origin)}] (\p1) -- (\p2) -- (\p3) -- cycle;		
		\draw[blue,fill=green!50,opacity=0.3] let \p1 = ($(mu1)+(origin)$), \p2 = ($7/24*(mu1)+(origin)$), \p3 = ($17/24*(mu0) + 7/24*(mu1)+(origin)$) in [rotate around={\i:(origin)}] (\p1) -- (\p2) -- (\p3) -- cycle; 
	}	

	\foreach \i in {60,180,300}
	{
		\draw[blue,fill=green!50,opacity=0.3] let \p1 = ($(mu0)+(origin)$), \p2 = ($7/24*(mu0)+(origin)$), \p3 = ($17/24*(mu1) + 7/24*(mu0)+(origin)$) in [rotate around={\i:(origin)}] (\p1) -- (\p2) -- (\p3) -- cycle;		
		\draw[blue,fill=red!50,opacity=0.3] let \p1 = ($(mu1)+(origin)$), \p2 = ($7/24*(mu1)+(origin)$), \p3 = ($17/24*(mu0) + 7/24*(mu1)+(origin)$) in [rotate around={\i:(origin)}] (\p1) -- (\p2) -- (\p3) -- cycle; 
	}
	}

	\foreach \k in {-1,1}
	{
	\coordinate (origin) at ($-2*\k*(mu0) + \k*(mu1)$);
	\coordinate (mu0) at ($1/3*(alpha1) + 2/3*(alpha2)$);
	\coordinate (mu1) at ($2/3*(alpha1) + 1/3*(alpha2)$);
	
	\foreach \i in {0,60,...,300}
	{
	\draw[blue!10,fill=blue!10] let \p0 = ($(origin)$), \p1 = ($(mu0)+(origin)$), \p2 = ($(mu1)+(origin)$) in 
		[rotate around={\i:(origin)}] (\p0) -- (\p1) -- (\p2) -- cycle;
	\draw[blue,fill=blue!50,opacity=0.3] let \p0 = ($(origin)$), \p1 = ($17/24*(mu0)+(origin)$), \p2 = ($17/24*(mu1)+(origin)$) in
		[rotate around={\i:(origin)}] (\p0) -- (\p1) -- (\p2) -- cycle;
	}

	\foreach \i in {0,120,240}
	{
		\draw[blue,fill=red!50,opacity=0.3] let \p1 = ($(mu0)+(origin)$), \p2 = ($7/24*(mu0)+(origin)$), \p3 = ($17/24*(mu1) + 7/24*(mu0)+(origin)$) in [rotate around={\i:(origin)}] (\p1) -- (\p2) -- (\p3) -- cycle;		
		\draw[blue,fill=green!50,opacity=0.3] let \p1 = ($(mu1)+(origin)$), \p2 = ($7/24*(mu1)+(origin)$), \p3 = ($17/24*(mu0) + 7/24*(mu1)+(origin)$) in [rotate around={\i:(origin)}] (\p1) -- (\p2) -- (\p3) -- cycle; 
	}	

	\foreach \i in {60,180,300}
	{
		\draw[blue,fill=green!50,opacity=0.3] let \p1 = ($(mu0)+(origin)$), \p2 = ($7/24*(mu0)+(origin)$), \p3 = ($17/24*(mu1) + 7/24*(mu0)+(origin)$) in [rotate around={\i:(origin)}] (\p1) -- (\p2) -- (\p3) -- cycle;		
		\draw[blue,fill=red!50,opacity=0.3] let \p1 = ($(mu1)+(origin)$), \p2 = ($7/24*(mu1)+(origin)$), \p3 = ($17/24*(mu0) + 7/24*(mu1)+(origin)$) in [rotate around={\i:(origin)}] (\p1) -- (\p2) -- (\p3) -- cycle; 
	}
	}

\end{tikzpicture}

\caption{\label{fig:cover_of_t}The cover of $\liet$ induced by the cover of $\Delta^2$.}	
\end{figure}

For any subset $I \subset \{0,1,2\}$ the Bredon cohomology $H^k_{\affW}(B_I, \cR)$ is only non-zero in degree $k = 0$ where we have a natural isomorphism 
\[
	H^0_{\affW}(B_I; \cR) \cong R_F(\bT^2)^{\affW_I}\ .
\] 
For $J \subset I$ the restriction homomorphism $H^0_{\affW}(B_J; \cR) \to H^0_{\affW}(B_I, \cR)$ translates into the natural inclusion $R_F(\bT^2)^{W_J} \subset R_F(\bT^2)^{W_I}$. The sum over all $H^q_{\affW}(B_I; \cR)$ with $\lvert I \rvert = p+1$ forms the $E^1$-page of a spectral sequence that converges to $H^{p+q}_{\affW}(\liet; \cR)$. By our above considerations this $E^1$-page boils down to the chain complex
\[
	C^k_{\affW}(\liet; \cR) = \bigoplus_{\lvert I \rvert = k+1} R_F(\bT^2)^{\affW_I} 
\]
with the differentials $d_k^{\mathrm{cell}} \colon C^k_{\affW}(\liet; \cR) \to C^{k+1}_{\affW}(\liet; \cR)$ given by alternating sums of inclusion homomorphisms. We can identify $W$ with the subgroup of $\affW$ consisting of elements of the form $(0,w) \in \pi_1(\bT^2,e) \rtimes W$. Observe that $W_i = W = \affW_0$ for $i \in \{0,1,2\}$, $W_{\{0,2\}} = \affW_{\{0,2\}}$, $W_{\{0,1\}} = \affW_{\{0,1\}}$ and we have group isomorphisms
\begin{align*}
	\affW_1 \to W_1 \qquad &,& \qquad x \mapsto ((-1,0,0),e_W) \cdot x \cdot ((1,0,0),e_W) \ ,\\	
	\affW_2 \to W_2 \qquad &,& \qquad x \mapsto ((0,0,1),e_W) \cdot x \cdot ((0,0,-1),e_W) \ .
\end{align*}
Here, $e_W$ denotes the neutral element of $W$ and we used the conjugation action of $\extW$ on $W$. The first isomorphism restricts to $\affW_{\{1,2\}} \to W_{\{1,2\}}$. These identifications induce corresponding isomorphisms of the fixed point rings 
\begin{align*}
	\widetilde{r}_1 &\colon R_F(\bT^2)^{W_1} \to R_F(\bT^2)^{\affW_1} \quad , \quad p \mapsto F(t_1) \cdot p \\
	\widetilde{r}_2 &\colon R_F(\bT^2)^{W_2} \to R_F(\bT^2)^{\affW_2} \quad , \quad p \mapsto F(t_3)^{-1} \cdot p \\
	\widetilde{r}_{\{1,2\}} &\colon R_F(\bT^2)^{W_{\{1,2\}}} \to R_F(\bT^2)^{\affW_{\{1,2\}}} \quad , \quad p \mapsto F(t_1) \cdot p
\end{align*}
Define $\widetilde{r}_I \colon R_F(\bT^2)^{W_I} \to R_F(\bT^2)^{\affW_I}$ for all other $I \subset \{0,1,2\}$ to be the identity. Let $\hat{r}_I = \widetilde{r}_I \circ r_I \colon R_F(G_I) \to R_F(\bT^2)^{\affW_I}$.

\begin{lemma} \label{lem:coh_with_coeff}
The isomorphisms $\hat{r}_I$ fit into the following commutative diagram:
\[
	\begin{tikzcd}
		\bigoplus_{\lvert I \rvert = 1} R_F(G_I) \ar[r,"d_0"] \ar[d,"\bigoplus_{\lvert I \rvert = 1} \hat{r}_I" right, "\cong" left] & \bigoplus_{\lvert I \rvert = 2} R_F(G_I) \ar[r,"d_1"] \ar[d,"\bigoplus_{\lvert I \rvert = 2} \hat{r}_I" right, "\cong" left] & R_F(\bT^2) \ar[d,"\hat{r}_{\{0,1,2\}} = \id{}" right, "\cong" left] \\
		C_{\affW}^0(\liet;\cR) \ar[r,"d_0^{\mathrm{cell}}" below] & C_{\affW}^1(\liet;\cR) \ar[r,"d_1^{\mathrm{cell}}" below] & C_{\affW}^2(\liet;\cR)
	\end{tikzcd}	
\]
In particular, the chain complex from Lemma~\ref{lem:diff_d0_d1} computes the $\affW$-equivariant Bredon cohomology $H^*_{\affW}(\liet; \cR)$ of $\liet$ with coefficients in $\cR$.  
\end{lemma}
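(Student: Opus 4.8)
The plan is to verify directly that the two squares of the diagram commute; the final assertion about Bredon cohomology is then automatic, because the discussion preceding the statement identifies the bottom row $C^\bullet_{\affW}(\liet;\cR)$ with the $E^1$-page of a spectral sequence converging to $H^{*}_{\affW}(\liet;\cR)$, and this spectral sequence collapses at $E^2$ since $H^q_{\affW}(B_I;\cR)$ vanishes for $q>0$, so that $H^n_{\affW}(\liet;\cR)\cong H^n\bigl(C^\bullet_{\affW}(\liet;\cR)\bigr)$. Recall that $d^{\mathrm{cell}}_k$ is by construction the alternating sum of the inclusions $R_F(\bT^2)^{\affW_J}\hookrightarrow R_F(\bT^2)^{\affW_I}$ over the codimension-one faces $J\subset I$, taken with the simplicial sign convention $(\delta\phi)(\{i_0<\dots<i_{k+1}\})=\sum_j(-1)^j\phi(\{i_0,\dots,\widehat{i_j},\dots,i_{k+1}\})$; this is exactly the sign pattern of $d_0$ and $d_1$ in Lemma~\ref{lem:diff_d0_d1}.

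First I would record that the vertical maps are isomorphisms. Each $r_I$ is an isomorphism onto $R_F(\bT^2)^{W_I}$ by the paragraph preceding the statement, and each $\widetilde{r}_I$ is multiplication by one of the elements $1$, $F(t_1)$, $F(t_3)^{-1}$ of $R_F(\bT^2)$; these are units since $F(t_1)F(t_2)F(t_3)\cong F(\rho|_{\bT^2})$ is inverted in $R_F(\bT^2)=R(\bT^2)[F(\rho|_{\bT^2})^{-1}]$. To see that $\widetilde{r}_I$ sends $R_F(\bT^2)^{W_I}$ onto $R_F(\bT^2)^{\affW_I}$, extend $\varphi$ from $\pi_1(\bT^2)$ to all of $\Z^3$ by the same formula $\varphi(k_1,k_2,k_3)=F(t_1)^{k_1}F(t_2)^{k_2}F(t_3)^{k_3}$; combined with the permutation action of $W$ this yields an action of $\extW=\Z^3\rtimes W$ on $R_F(\bT^2)$ extending the $\affW$-action. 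Since the isomorphisms $\affW_I\to W_I$ are conjugation by the elements $c_1=((-1,0,0),e_W)$, $c_2=((0,0,1),e_W)$ of $\extW$ (and the identity otherwise), conjugation by $c_I^{-1}$ carries $R_F(\bT^2)^{W_I}$ isomorphically onto $R_F(\bT^2)^{\affW_I}$; as $c_I^{-1}$ lies in the $\Z^3$-part it acts on the ring by multiplication with $F(t_1)$, resp. $F(t_3)^{-1}$, which is precisely $\widetilde{r}_I$.

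It then remains to compare the squares term by term using Lemma~\ref{lem:diff_d0_d1}. Writing $\mathrm{res}\colon R_F(SU(3))\to R_F(\bT^2)$ for the total restriction, the essential input is that the twisting factors in $d_0$ and $d_1$ restrict to exactly the monomials absorbed by the $\widetilde{r}_I$. Reading off the block form of each $U(2)=G_I\subset SU(3)$ from the eigenvalue data in Fig.~\ref{tab:factor_table} one gets $r_{\{0,1\}}(\lambda_F)=F(t_1)$, $r_{\{0,2\}}(\lambda_F)=F(t_3)$, $r_{\{1,2\}}(\mu_F)=F(t_1)F(t_3)$, while $\nu_F=F(t_1)$ by definition. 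Substituting these, the $\{0,1\}$-component of $\hat{r}_{\{0,1\}}\circ d_0$ becomes $-\mathrm{res}(x_0)+F(t_1)\mathrm{res}(x_1)$, which is the image under the two inclusions of $-\hat{r}_{\{0\}}(x_0)+\hat{r}_{\{1\}}(x_1)$; the $\{1,2\}$-component becomes $F(t_1)\bigl(-\mathrm{res}(x_1)+F(t_1)^{-1}F(t_3)^{-1}\mathrm{res}(x_2)\bigr)=-\hat{r}_{\{1\}}(x_1)+\hat{r}_{\{2\}}(x_2)$; and likewise the $\{0,2\}$-component equals $-\hat{r}_{\{0\}}(x_0)+\hat{r}_{\{2\}}(x_2)$. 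This is exactly $d_0^{\mathrm{cell}}$ evaluated on $\bigl(\hat{r}_{\{0\}}(x_0),\hat{r}_{\{1\}}(x_1),\hat{r}_{\{2\}}(x_2)\bigr)$. For the right-hand square $\widetilde{r}_{\{0,1,2\}}=\widetilde{r}_{\{0,1\}}=\widetilde{r}_{\{0,2\}}=\id{}$, and $\widetilde{r}_{\{1,2\}}$ contributes precisely the factor $F(t_1)=\nu_F$ in front of $r_{\{1,2\}}(y_{12})$, so $\hat{r}_{\{0,1,2\}}\circ d_1$ agrees with $d_1^{\mathrm{cell}}\circ\bigl(\bigoplus_{|I|=2}\hat{r}_I\bigr)$ once the simplicial signs are matched with those of $d_1$.

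The only genuine work is the bookkeeping in the last step: determining, for each two-element $I$, which diagonal positions the eigenspace ``between $\omega_i$ and $\omega_j$'' occupies — hence which monomial in the $F(t_k)$ the restriction $r_I(\lambda_F)$ or $r_I(\mu_F)$ equals — and fixing the orientation conventions on the $\affW$-CW structure of $\liet$ so that $d^{\mathrm{cell}}_\bullet$ reproduces the signs of Lemma~\ref{lem:diff_d0_d1}. Both are routine once the data encoded in Fig.~\ref{tab:factor_table} is unwound, so I expect no conceptual obstacle beyond this accounting.
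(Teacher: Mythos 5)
Your proof is correct and takes essentially the same approach as the paper: both verify commutativity of the two squares by computing the explicit restrictions of the twisting factors ($r_{01}(\lambda_F)=F(t_1)$, $r_{02}(\lambda_F)=F(t_3)$, $r_{12}(\mu_F)=F(t_1)F(t_3)$, $\nu_F=F(t_1)$) and then matching against the cellular differential and the definitions of $\widetilde{r}_I$. The only cosmetic difference is that the paper factors the check through auxiliary maps $\widehat{d}_0,\widehat{d}_1$ living on $\bigoplus R_F(\bT^2)^{W_I}$, whereas you do the computation in one step and add the (correct and clarifying) observation that each $\widetilde{r}_I$ arises as the action of an element of $\Z^3\subset\extW$ conjugating $W_I$ to $\affW_I$.
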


\begin{proof}
Let $r^{(k)}$ (respectively $\widetilde{r}^{(k)}$) for $k \in \{1,2,3\}$ be the sum over all $r_I$ (respectively $\widetilde{r}_I$) for all $I \subset \{0,1,2\}$ with $\lvert I \rvert = k+1$. Consider
\begin{align*}
	\widehat{d}_0 &\colon \bigoplus_{\lvert I \rvert = 1} R_F(\bT^2)^{W_I} \to \bigoplus_{\lvert I \rvert = 2} R_F(\bT^2)^{W_I} \ ,\\  
	\widehat{d}_1 &\colon \bigoplus_{\lvert I \rvert = 2} R_F(\bT^2)^{W_I} \to R_F(\bT^2)   
\end{align*}
with $\widehat{d}_0(x_0,x_1,x_2) = (-x_0  + F(t_1)x_1, -x_1 + F(t_1 + t_3)^{-1}x_2, -x_0 + F(t_3)^{-1}x_2)$ and $\widehat{d}_1(y_{01}, y_{12}, y_{02}) = y_{01} + F(t_1)y_{12} - y_{02}$. Then we have $r^{(2)} \circ d_0 = \widehat{d}_0 \circ r^{(1)}$ and $\widehat{d}_1 \circ r^{(2)} = d_1$. The statement follows from the following two observations: 
\begin{align*}
	 & (\widetilde{r}^{(2)} \circ \widehat{d}_0)(x_0,x_1,x_2) \\
	=\ & (-x_0  + F(t_1)x_1, -F(t_1)x_1 + F(t_3)^{-1}x_2, -x_0 + F(t_3)^{-1}x_2) \\
	=\ & (-\widetilde{r}_0(x_0)  + \widetilde{r}_1(x_1), -\widetilde{r}_1(x_1) + \widetilde{r}_2(x_2), -\widetilde{r}_0(x_0) + \widetilde{r}_2(x_2)) \\
	=\ & (d_0^{\mathrm{cell}} \circ \widetilde{r}^{(1)})(x_0,x_1,x_2)
\end{align*}
and
\[
 (d_1^{\mathrm{cell}} \circ \widetilde{r}^{(2)})(y_{01},y_{12},y_{02}) = y_{01} + F(t_1)y_{12} - y_{02} = \widehat{d}_1(y_{01},y_{12},y_{02})\ .
\]
\end{proof}
The above lemma reduces the problem of computing the equivariant higher twisted $K$-theory of $SU(3)$ to the computation of Bredon cohomology groups with local coefficients. We will determine these groups after rationalising the coefficients, i.e.\ we compute $H^*_{\affW}(\liet; \cRQ)$. The inclusion $\cR \to \cRQ$ induces a homomorphism
\begin{equation} \label{eqn:rational_coh}
	H^*_{\affW}(\liet; \cR) \to H^*_{\affW}(\liet; \cRQ)\ .
\end{equation}
Even though \cite[Thm.~3.11]{AdemCantareroGomez-TwistedK:2018} is only stated for coefficient systems $\cRQ$ where the module structure is induced by a homomorphism $\pi_1(\bT^2) \to \hom(\bT^2, S^1)$, the proof works verbatim in our situation, where $R_F(\bT^2) \otimes \Q$ carries a $\pi_1(\bT^2)$-action induced by $\pi_1(\bT^2) \to GL_1(R_F(\bT^2) \otimes \Q)$. Hence, we obtain
\begin{equation} \label{eqn:needs_Q}
	H^*_{\affW}(\liet; \cRQ) \cong H^*_{\pi_1(\bT^2)}(\liet; \cRQ)^W\ . 
\end{equation}

\begin{lemma} \label{lem:reg_seq}
	Let $F$ be an exponential functor with $\deg(F(t_1)) > 0$, then $(F(t_2) - F(t_1), F(t_3) - F(t_2))$ is a regular sequence in $R_F(\bT^2) \otimes \Q$. 
\end{lemma}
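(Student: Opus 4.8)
I would first reduce the statement to a coprimality assertion about two Laurent polynomials, and then settle that coprimality by a Newton polygon argument.

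\emph{Reduction.} Eliminating $t_3=(t_1t_2)^{-1}$ identifies $R:=R_F(\bT^2)\otimes\Q$ with the localization $\Q[t_1^{\pm 1},t_2^{\pm 1}]\big[(F(t_1)F(t_2)F(t_3))^{-1}\big]$, where $F(t_3)=F((t_1t_2)^{-1})$ and $F(t)\in\Z[t^{\pm 1}]$ denotes the Laurent polynomial giving the character of $F(\C)$ for the weight-one circle action. As a localization of $\Q[t_1,t_2]$, the ring $R$ is a regular (hence Cohen--Macaulay) Noetherian UFD of Krull dimension $2$; in particular it is a domain. Put $g_1=F(t_2)-F(t_1)$ and $g_2=F(t_3)-F(t_2)$. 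Since $\deg F(t)>0$ the Laurent polynomial $F(t)$ is non-constant, so $g_1$ is a nonzero element of the domain $R$, hence a nonzerodivisor. The quotient $R/(g_1)$ is then a quotient of a Cohen--Macaulay ring by a nonzerodivisor, so it is again Cohen--Macaulay and therefore has no embedded associated primes; thus $\operatorname{Ass}(R/(g_1))$ consists exactly of the minimal primes over $(g_1)$, which, $R$ being a UFD, are the principal primes $(\pi)$ for $\pi$ an irreducible factor of $g_1$. Hence $g_2$ is a nonzerodivisor on $R/(g_1)$ if and only if no irreducible factor of $g_1$ divides $g_2$. Moreover $(g_1,g_2)$ is a proper ideal, being contained in the maximal ideal of the point $(t_1,t_2,t_3)=(1,1,1)$, which lies in $\operatorname{Spec}R$ because $F(1)=\dim F(\C)\neq 0$. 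Therefore $(g_1,g_2)$ is a regular sequence precisely when $\gcd(g_1,g_2)=1$ in $\Q[t_1^{\pm 1},t_2^{\pm 1}]$, and it remains to prove this.

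\emph{Coprimality.} Suppose $\pi$ is a common irreducible factor of $g_1$ and $g_2$. The monomials of $g_1=F(t_2)-F(t_1)$ lie on the $t_1$-axis and the $t_2$-axis, so the edges of its Newton polygon point only in the directions $(1,0)$, $(0,1)$, $(1,-1)$; the monomials of $g_2$ lie on the $t_2$-axis and on the anti-diagonal (as $F(t_3)=F((t_1t_2)^{-1})$), and a short inspection, according to the sign and size of the least exponent of $F(t)$, shows that at most one of those three directions occurs among the edge directions of the Newton polygon of $g_2$. Since the Newton polygon of $\pi$ is a Minkowski summand of both, all its edges are parallel to edges of each, so $\mathrm{Newt}(\pi)$ is a single segment; hence, up to a monomial, $\pi$ is a polynomial in one of $t_1$, $t_2$, $t_1t_2^{-1}$. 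If $\pi$ is a polynomial in $t_1$ or $t_2$ alone, it is constant in the other of these two variables; but the coefficient of the top power of that variable in $g_1$ is the nonzero constant $\pm c_N$ (with $c_N$ the leading coefficient of $F(t)$ and $N=\deg F(t)$), so $\pi$ would divide $c_N$ and be a unit, a contradiction. If $\pi=P(u)$ with $u=t_1t_2^{-1}$ up to a monomial, substitute $t_1=ut_2$: then $g_1=\sum_k c_k t_2^{k}(1-u^{k})$ has leading coefficient $c_N(1-u^{N})\neq 0$ in $t_2$, so $P(u)\mid 1-u^{N}$ and $P$ is a product of cyclotomic polynomials $\Phi_m$ with $m\mid N$. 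For any such $m$ with $\Phi_m\mid P$, the divisibility $P\mid g_2$ forces, on setting $u=\zeta_m$ and using $t_3=u^{-1}t_2^{-2}$, the identity of Laurent polynomials $F(\zeta_m^{-1}t_2^{-2})=F(t_2)$; comparing $t_2$-supports yields $-2S=S$ for the finite non-empty exponent set $S=\{k:c_k\neq 0\}$, which is impossible unless $S=\{0\}$, i.e.\ $F$ is constant, contradicting $\deg F(t)>0$. Hence $P$ is a unit, $g_1$ and $g_2$ share no irreducible factor, $\gcd(g_1,g_2)=1$, and by the reduction above $(g_1,g_2)$ is a regular sequence.

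\emph{Where the work is.} The ring-theoretic reduction is routine; the content is the coprimality step, and within it the most delicate point is the Newton polygon bookkeeping for $g_2$. One must allow $F(t)$ to have negative exponents (this can genuinely occur), so the shape of $\mathrm{Newt}(g_2)$ depends on the sign and magnitude of the least exponent of $F(t)$, and one has to verify in each case that $\mathrm{Newt}(g_2)$ shares at most one edge direction with $\mathrm{Newt}(g_1)$ and then dispose of the resulting facial subsystem. Conceptually the argument works because $g_1$ and $g_2$ share only their common summand $F(t_2)$, while their remaining parts live on lines (the $t_1$-axis and the anti-diagonal) that are transverse to the $t_2$-axis and to one another; it is precisely the fact that the ``third'' variable enters as $F$ of the product of the other two that makes the facial identities $F(\zeta t)\equiv F(t)$ and $F(t^{-2}u^{-1})\equiv F(t)$ untenable.
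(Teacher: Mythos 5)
Your argument is correct and takes a genuinely different route from the paper. The paper's proof is direct and elementary: after clearing denominators it works with honest polynomials $p,q\in\Q[t_1,t_2]$, replaces $p$ by a representative whose $t_2$-degree is less than $\deg F(t)$ modulo $F(t_2)-F(t_1)$, and compares the top homogeneous parts of $p\,(F(t_3)-F(t_1))=q\,(F(t_2)-F(t_1))$ to force $p=0$. You instead isolate the commutative-algebra content (in a regular two-dimensional UFD with $(g_1,g_2)$ proper, regular sequence $\Leftrightarrow$ coprime) and prove coprimality by comparing Newton polytopes, which makes the underlying mechanism transparent: the supports of $g_1$ and $g_2$ sit on three pairwise transverse lines. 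One caveat about your negative-exponent bookkeeping: the assertion that $\mathrm{Newt}(g_1)$ has edge directions contained in $\{(1,0),(0,1),(1,-1)\}$ fails once $F(t)$ has exponents of both signs, because the support of $g_1$ then straddles the origin on each axis and the convex hull acquires edges joining $(0,m)$ to $(-b,0)$, with directions $(b,m)$ and $(m,b)$ for $m=\max$ and $-b=\min$ exponent. Correspondingly two edge directions can be shared with $\mathrm{Newt}(g_2)$ (for $S=\{-2,-1,0,1\}$ one gets shared directions $(1,-1)$ and $(2,1)$), so the step ``$\mathrm{Newt}(\pi)$ is a segment'' breaks down. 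The paper avoids this by writing $F(t_1)=\sum_{k=0}^m a_k t_1^k$, i.e.\ by assuming $F(t)$ is an honest polynomial with nonnegative exponents; under that standing hypothesis your Newton-polygon argument closes cleanly, since the only shared direction is $(0,1)$, only the $t_2$-branch of your trichotomy occurs, and the leading-coefficient observation finishes it.
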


\begin{proof}
	Let $R_F = R_F(\bT^2) \otimes \Q$, $R = R(\bT^2) \otimes \Q \subset R_F$ and let $q_F = F(\rho) = F(t_1 + t_2 + t_3)$. Let $I_{jk} \subset R_F$ be the ideal generated by $F(t_k) - F(t_j)$.  We have to show that multiplication by $F(t_3) - F(t_2)$ is injective on $R_F/I_{12}$. On this quotient $F(t_3) - F(t_2)$ agrees with $F(t_3) - F(t_1)$. Suppose we have elements $p,q \in R_F$ with the property that $p$ is not divisible by $F(t_2) - F(t_1)$ and  
	\begin{equation} \label{eqn:div}		
		p\cdot (F(t_3) - F(t_1)) = q \cdot (F(t_2) - F(t_1))\ .
	\end{equation}
	Multiplying both sides by an appropriate power of $q_F$ we may assume that $p, q \in R$. Now we can use the relation $t_3 = (t_1t_2)^{-1}$ to express both sides of~\eqref{eqn:div} in terms of $t_1, t_2, t_1^{-1}, t_2^{-1}$. Since we may multiply both sides by $t_1^kt_2^l$ for appropriate $k,l \in \N_0$, we can without loss of generality assume that $p, q \in \Q[t_1,t_2]$. Let
	\[
		F(t_1) = \sum_{k=0}^m a_kt_1^k
	\]
	with $a_m \neq 0$. We have $\deg(F(t_1)) = \deg(F(t_2)) = m$ and by our assumption $m > 0$. However, note that $\deg(F(t_3)) \leq 0$. The highest order term of $F(t_2)$ can be expressed as follows
	\[
		a_mt_2^m = a_mt_1^m - \sum_{k=1}^{m-1} a_k(t_2^k - t_1^k) + F(t_2) - F(t_1)\ .
	\]
	Since we are working over $\Q$, we can therefore assume that $p$ is a linear combination of terms $t_1^kt_2^l$ with $l < m$ by adapting $q$ accordingly. Suppose that $p$ has total degree $r$ and let $p_r$ be the corresponding homogeneous part. Comparing the terms of highest degree in \eqref{eqn:div} we obtain
	\[
		-p_r t_1^m = q_r (t_2^m - t_1^m)\ ,
	\]   
	where $q_r$ is the homogeneous part of $q$ of degree $r$. Since the left hand side contains no summands $t_1^kt_2^l$ with $l \geq m$, this equation implies $q_r = 0$, therefore $p_r = 0$ and hence $p = 0$. This is a contradiction to our initial divisibility assumption. Hence $p$ must be divisible by $F(t_2) - F(t_1)$ proving that multiplication by $F(t_3) - F(t_2)$ is injective on $R_F/I_{12}$.
\end{proof}

\begin{theorem} \label{thm:Koszul}
	Let $F$ be an exponential functor with $\deg(F(t_1)) > 0$. Then $H^k_{\pi_1(\bT^2)}(\liet; \cRQ) = 0$ for $k \neq 2$ and 
	\[
		H^2_{\pi_1(\bT^2)}(\liet; \cRQ) \cong R_F(\bT^2) \otimes \Q/(F(t_2) - F(t_1), F(t_3) - F(t_2)) \ .
	\]
	Moreover, $W$ acts on $H^2_{\pi_1(\bT^2)}(\liet;\cRQ)$ by signed permutations. 
\end{theorem}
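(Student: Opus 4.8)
The plan is to recognise $H^*_{\pi_1(\bT^2)}(\liet;\cRQ)$ as the group cohomology $H^*(\pi_1(\bT^2);M)$ of the free abelian group $\Lambda=\pi_1(\bT^2)$ with coefficients in $M:=R_F(\bT^2)\otimes\Q$, and then to compute the latter with a Koszul complex. Since $\liet\cong\R^2$ is contractible and $\Lambda$ acts on it freely by translations, $\liet$ is a model for $E\Lambda$; as all isotropy groups are trivial, the coefficient system $\cRQ$ is determined by the single $\Lambda$-module $\cRQ(\Lambda/e)=M$, on which $\Lambda$ acts through $\varphi$, and the Bredon cohomology collapses to $H^*(\Lambda;M)$. (Equivalently, the $\Lambda$-translates of the simplices in Fig.~\ref{fig:cover_of_t} make $\liet$ a free $\Lambda$-CW complex whose cellular chain complex is a free $\Z[\Lambda]$-resolution of $\Z$, and one applies $\operatorname{Hom}_{\Z[\Lambda]}(-,M)$.)

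Next I would fix the basis $g_1=(1,-1,0)$, $g_2=(0,1,-1)$ of $\Lambda\subset\Z^3$ and use the Koszul resolution of $\Z$ over $\Z[\Lambda]\cong\Z[g_1^{\pm 1},g_2^{\pm 1}]$ on $(g_1-1,g_2-1)$. Applying $\operatorname{Hom}_{\Z[\Lambda]}(-,M)$ produces the cochain complex
\[
0\longrightarrow M\xrightarrow{\,m\mapsto((u_1-1)m,(u_2-1)m)\,}M\oplus M\xrightarrow{\,(m_1,m_2)\mapsto(u_2-1)m_1-(u_1-1)m_2\,}M\longrightarrow0,
\]
where $u_i=\varphi(g_i)$, so $u_1=F(t_1)F(t_2)^{-1}$ and $u_2=F(t_2)F(t_3)^{-1}$. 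Each $F(t_i)$ is a unit in $R_F(\bT^2)$ because $F(t_1)F(t_2)F(t_3)=F(\rho|_{\bT^2})$ is inverted in the localisation; hence $u_1-1=F(t_2)^{-1}(F(t_1)-F(t_2))$ and $u_2-1=F(t_3)^{-1}(F(t_2)-F(t_3))$ are unit multiples of $F(t_2)-F(t_1)$ and $F(t_3)-F(t_2)$. Rescaling the two middle summands of $M$ by these units identifies the complex above with the Koszul cochain complex of the pair $\bigl(F(t_2)-F(t_1),\,F(t_3)-F(t_2)\bigr)$ acting on $M$.

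Now Lemma~\ref{lem:reg_seq} --- the only place where the hypothesis $\deg(F(t_1))>0$ is used --- states that $\bigl(F(t_2)-F(t_1),\,F(t_3)-F(t_2)\bigr)$ is a regular sequence in $M$; since $M$ is a localisation of the integral domain $R(\bT^2)\otimes\Q$, the element $F(t_2)-F(t_1)\neq 0$ is automatically a nonzerodivisor, so the content of the lemma is really regularity of the second element modulo the first. For a length-two regular sequence the Koszul chain complex is a free $M$-resolution of $M/(F(t_2)-F(t_1),F(t_3)-F(t_2))$, and the cochain complex above is its $M$-dual; hence its cohomology is concentrated in degree $2$, equal to that quotient. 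This gives $H^k_{\pi_1(\bT^2)}(\liet;\cRQ)=0$ for $k\neq 2$ and $H^2_{\pi_1(\bT^2)}(\liet;\cRQ)\cong R_F(\bT^2)\otimes\Q\,/\,\bigl(F(t_2)-F(t_1),\,F(t_3)-F(t_2)\bigr)$.

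For the final assertion, $W=S_3$ acts compatibly on $\Lambda$, on $\liet$ and on $M$ (by permuting $t_1,t_2,t_3$), and $\varphi$ is $W$-equivariant by functoriality of $F$, so the actions assemble over $\affW=\Lambda\rtimes W$ and $W$ acts on $H^2_{\pi_1(\bT^2)}(\liet;\cRQ)$. I would identify this action via Poincaré duality for the orientable $\mathrm{PD}_2$-group $\Lambda\cong\Z^2$: there is a $W$-equivariant isomorphism $H^2(\Lambda;M)\cong M_\Lambda\otimes_\Z\extp^2\Lambda$, where $M_\Lambda$ is the coinvariant module (the quotient ring above) with its evident permutation $W$-action, and $W$ acts on $\extp^2\Lambda\cong\Z$ through the determinant of its action on $\Lambda\otimes\R\cong\liet$, which for $S_3$ is the sign character. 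Thus $W$ acts on $H^2$ by the permutation action on the quotient ring twisted by $\operatorname{sgn}\colon W\to\{\pm1\}$, i.e.\ by signed permutations of a monomial basis. I expect this last step to be the main obstacle: one must verify the $W$-equivariance of $\varphi$ so the $\affW$-module structure is consistent, and then pin down the sign --- which is cleanest via the Poincaré-duality identification, where the sign lives entirely in $\extp^2\Lambda$, rather than by tracking signs through the $W$-action on the Koszul complex. The rest is routine homological algebra once Lemma~\ref{lem:reg_seq} is in hand.
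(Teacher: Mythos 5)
Your proof is correct and follows the paper's route in its essentials: both proofs reduce the Bredon cohomology of $\liet$ to the cohomology of a Koszul complex built from the regular sequence of Lemma~\ref{lem:reg_seq}, and both deduce concentration in degree~$2$ and identification with the quotient from the standard Koszul theory. The paper phrases things directly in terms of the cochain complex $\hom_{\Q[\Lambda]}(C_*(\liet)\otimes\Q,\cRQ)$ and the identification of $C_*(\liet)$ with a Koszul complex (citing Charlap and Adem--Cantarero--G\'omez), whereas you first observe explicitly that $\liet$ is a model for $E\Lambda$, so that the Bredon cohomology is just group cohomology $H^*(\Lambda;M)$ --- this is the same calculation, conceptually repackaged. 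You also spell out the rescaling that converts $(u_1-1,u_2-1)$ into the stated generators $(F(t_2)-F(t_1),F(t_3)-F(t_2))$; the paper handles this implicitly through Lemma~\ref{lem:reg_seq}. The genuine difference is in the last assertion. The paper determines the $W$-action by tracking $W$ directly through the Koszul resolution $K_n=\extp^n\Z^2\otimes\Z[\Lambda]$ with the conjugation action $(g\cdot\varphi)(y)=g\varphi(g^{-1}y)$, and then reads off that $W$ acts on $\extp^2\Q^2$ by the sign character. You instead invoke Poincar\'e duality for the orientable $\mathrm{PD}_2$-group $\Lambda\cong\Z^2$, isolating the twist into the dualizing module $\extp^2\Lambda$, on which $W$ acts by $\det=\mathrm{sgn}$. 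Both yield the same answer; your route is arguably the more structural way to see where the sign comes from, while the paper's is more self-contained since the explicit Koszul model is already on hand. One small caveat: your Poincar\'e-duality isomorphism $H^2(\Lambda;M)\cong M_\Lambda\otimes_\Z\extp^2\Lambda$ and the $W$-equivariance of $\varphi$ are asserted rather than verified, but these are routine --- $\varphi$ is manifestly $W$-equivariant because the formula $\varphi(k_1,k_2,k_3)=F(t_1)^{k_1}F(t_2)^{k_2}F(t_3)^{k_3}$ is symmetric under simultaneous permutation of the $k_i$ and $t_i$, and the duality statement is standard for free abelian groups once one fixes the Koszul resolution.
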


\begin{proof}
	Let $\Lambda = \pi_1(\bT^2)$ be as in \eqref{eqn:lattice}. The two vectors $\kappa_1 = (1,-1,0)$ and $\kappa_2 = (0,1,-1)$ form a basis of $\Lambda$. Note that 
	\[
		\Q[\Lambda] \cong \Q[r_1,r_2,r_3]/(r_1r_2r_3 - 1)
	\]
	and $\kappa_1$ corresponds to $r_1r_2^{-1}$ under this isomorphism. Likewise $\kappa_2$ agrees with $r_2r_3^{-1}$. The action of $\Lambda$ on $R_F(\bT^2) \otimes \Q$ extends to a ring homomorphism
	\(
		\varphi \colon \Q[\Lambda] \to R_F(\bT^2) \otimes \Q 
	\)
	given by
	\[
	\varphi\left(\sum_{k,l,m} a_{klm} r_1^k r_2^l r_3^m\right) = \sum_{k,l,m} a_{klm} F(t_1)^kF(t_2)^lF(t_3)^m\ .
	\]
	In particular, $\varphi(r_1r_2^{-1}) = F(t_1)F(t_2)^{-1}$ and similarly for $r_2r_3^{-1}$. The equivariant cohomology groups $H^*_{\Lambda}(\liet; \cRQ)$ are computed by the cochain complex
	\begin{equation} \label{eqn:cochain_complex}
		\hom_{\Q[\Lambda]}(C_*(\liet) \otimes \Q, R_F(\bT^2) \otimes \Q)
	\end{equation}
	(see \cite[I.9, (9.3)]{Bredon-Cohomology:1967}), where $C_*(\liet)$ is the cellular chain complex of $\liet$ viewed as a $\Lambda$-CW-complex. Note that $\liet$ has a $\Lambda$-CW-structure with one $0$-cell given by the orbit of the origin, two $1$-cells corresponding to the orbits of the edge from $(0,0,0)$ to $(1,-1,0)$ and to $(0,1,-1)$, respectively, and one $2$-cell. As pointed out in the proof of \cite[Thm.~4.2]{AdemCantareroGomez-TwistedK:2018} (see also \cite[p.~96]{Charlap-FlatMfds:1986}) the chain complex $C_*(\liet)$ can be identified with the Koszul complex 
	\[
		K_n = \extp^n\Z^2 \otimes \Z[\Lambda]
	\] 
	for the sequence $(r_1r_2^{-1} - 1, r_2r_3^{-1} - 1)$. If we identify $C_*(\liet)$ and $K_*$ in this way, the cochain complex in \eqref{eqn:cochain_complex} turns into 
	\[
		C^n_F = \extp^n R_F^2 \quad \text{with} \quad d^n(y) = x \wedge y 
	\]
	where $R_{F} = R_F(\bT^2) \otimes \Q$ and $x = (F(t_1)F(t_2)^{-1}-1, F(t_2)F(t_3)^{-1}-1)$, which is a regular sequence in $R_F$ by Lemma~\ref{lem:reg_seq}. The first part of the statement now follows from  \cite[Cor.~17.5]{Eisenbud-CommAlg:1995}. We can identify $\Z^2$ with $\Lambda$ using $\kappa_1$ and $\kappa_2$. This induces an action of $W$ on $\Z^2$. The group $W$ acts diagonally on $K_n$ using its natural action on $\Z[\Lambda]$. If we equip the cochain complex
	\[
		\hom_{\Q[\Lambda]}(K_n \otimes \Q, R_F(\bT^2) \otimes \Q)
	\]
	with the $W$-action given by $(g \cdot \varphi)(y) = g\varphi(g^{-1}y)$, then the isomorphism of this cochain complex with \eqref{eqn:cochain_complex} is equivariant. Likewise, $W$ acts diagonally on $C_F^n \cong \extp^n\Q^2 \otimes R_F$ making the last isomorphism equivariant as well. In particular, $W$ acts on $\extp^2\Q^2$ via the sign representation. This proves the second statement.
\end{proof}

To distinguish the signed permutation action of $W$ on $R_F(\bT^2) \otimes \Q$ from the usual one, we denote the two modules by $R_F^{\mathrm{sgn}}$ and $R_F$ respectively as in~\cite{AdemCantareroGomez-TwistedK:2018}. We also define $I_F^{\mathrm{sgn}} = (F(t_2) - F(t_1), F(t_3) - F(t_2))$. Over the rational numbers taking invariants with respect to a finite group action is an exact functor. Hence, Thm.~\ref{thm:Koszul} gives isomorphisms of $R_F^W$-modules
\[
	H^2_{\affW}(\liet; \cRQ) \cong H^2_{\Lambda}(\liet; \cRQ)^W \cong (R_F^{\mathrm{sgn}})^W/(I_F^{\mathrm{sgn}})^W\ .
\]
The ring $R_F^W$ is a localisation of the quotient of the ring of symmetric polynomials in the variables $t_1, t_2, t_3$ by the ideal generated by $(t_1t_2t_3 -1)$. The module $(R_F^{\mathrm{sgn}})^W$ is a similar quotient of the antisymmetric polynomials by the submodule $(t_1t_2t_3 -1) (R_F^{\mathrm{sgn}})^W$. Any antisymmetric polynomial is divisible by the Vandermonde determinant
\[
	\Delta = \Delta(t_1,t_2,t_3) = (t_1 - t_2)(t_2 - t_3)(t_3 - t_1)\ .
\]
This induces an $R_F^W$-module isomorphism 
\[
	\Psi \colon (R_F^{\mathrm{sgn}})^W \to R_F^W  \quad , \quad p \mapsto \frac{p}{\Delta}
\]
(compare with \cite[Sec.~5.1]{AdemCantareroGomez-TwistedK:2018}). Let $\theta_{jk} = F(t_j) - F(t_k)$. 

\begin{lemma} \label{lem:submod}
	The submodule $(I_F^{\mathrm{sgn}})^W$ is generated by the two antisymmetric polynomials $q_+ = \theta_{12}t_3 + \theta_{23}t_1 + \theta_{31}t_2$ and $q_- = \theta_{12}t_3^{-1} + \theta_{23}t_1^{-1} + \theta_{31}t_2^{-1}$. The corresponding submodule $\Psi((I_F^{\mathrm{sgn}})^W)$ is generated by 
	\begin{equation} \label{eqn:determinant}
		\Psi(q_{\pm}) = -\frac{1}{\Delta} \det \begin{pmatrix}
			F(t_1) & F(t_2) & F(t_3) \\[2mm]
			t_1^{\pm 1} & t_2^{\pm 1} & t_3^{\pm 1} \\[2mm]
			1 & 1 & 1
		\end{pmatrix}\ .
	\end{equation}
\end{lemma}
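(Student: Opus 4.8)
The plan is to establish the two halves of the statement separately, with the real content in the first. Throughout write $R_F = R_F(\bT^2)\otimes\Q$, let $W = S_3$ act on $R_F$ by permuting $t_1,t_2,t_3$ (the ``ordinary'' action), and recall that $R_F^W$ is a localisation of $R(SU(3))\otimes\Q$, with $t_3 = (t_1t_2)^{-1}$ and elementary symmetric functions $e_1 = t_1+t_2+t_3$, $e_2 = t_1t_2+t_2t_3+t_3t_1$, $e_3 = 1$. Since $F(t_2)-F(t_1) = -\theta_{12}$, $F(t_3)-F(t_2) = -\theta_{23}$ and $\theta_{31} = -(\theta_{12}+\theta_{23})$, the ideal $I_F^{\mathrm{sgn}}$ equals $R_F\,\theta_{12} + R_F\,\theta_{23}$; it is $W$-stable, and $(I_F^{\mathrm{sgn}})^W$ is precisely the submodule of those elements of $I_F^{\mathrm{sgn}}$ that are antisymmetric for the ordinary $W$-action (this is the meaning of the ``$\mathrm{sgn}$'' on $R_F^{\mathrm{sgn}}$). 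A one-line check shows $q_\pm$ are antisymmetric and lie in $I_F^{\mathrm{sgn}}$, giving the inclusion $R_F^W q_+ + R_F^W q_- \subseteq (I_F^{\mathrm{sgn}})^W$; everything then reduces to the reverse inclusion.

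For that I would use the antisymmetrisation operator $\mathrm{Alt}(p) = \tfrac16\sum_{w\in W}\mathrm{sgn}(w)\,w\cdot p$, which is $R_F^W$-linear, is the identity on antisymmetric elements, and carries $I_F^{\mathrm{sgn}}$ onto $(I_F^{\mathrm{sgn}})^W$. Using $\mathrm{Alt}(w\cdot x) = \mathrm{sgn}(w)\,\mathrm{Alt}(x)$ together with $\theta_{23} = (1\,2\,3)\cdot\theta_{12}$ one gets $\mathrm{Alt}(R_F\theta_{23}) = \mathrm{Alt}(R_F\theta_{12})$, so $(I_F^{\mathrm{sgn}})^W = \mathrm{Alt}(R_F\theta_{12})$. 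Collecting the six terms of $6\,\mathrm{Alt}(p\,\theta_{12})$ according to which of $\theta_{12},\theta_{23},\theta_{31}$ occurs yields the identity
\[
  3\,\mathrm{Alt}(p\,\theta_{12}) \;=\; s\,\theta_{12} \;+\; \bigl((1\,3)\cdot s\bigr)\,\theta_{23} \;+\; \bigl((2\,3)\cdot s\bigr)\,\theta_{31}, \qquad s := \tfrac12\bigl(p + (1\,2)\cdot p\bigr),
\]
so that $\mathrm{Alt}(p\,\theta_{12})$ depends $R_F^W$-linearly only on the image $s$ of $p$ in the intermediate ring $R_F^{\langle(1\,2)\rangle}$ of elements symmetric in $t_1,t_2$. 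Hence $\mathrm{Alt}(R_F\theta_{12})$ is the $R_F^W$-span of the values of this expression as $s$ runs over an $R_F^W$-generating set of $R_F^{\langle(1\,2)\rangle}$.

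The key algebraic fact is that $R_F^{\langle(1\,2)\rangle}$ is generated over $R_F^W$ by $\{1,t_3,t_3^{\,2}\}$: from $t_1t_2 = t_3^{-1}$ and $t_1+t_2 = e_1 - t_3$ one identifies $R_F^{\langle(1\,2)\rangle}$ with $R_F^W[t_3^{\pm1}]$, and $t_3$ is a root of the monic cubic $X^3 - e_1X^2 + e_2X - 1$ over $R_F^W$, so $t_3^{-1} = t_3^{\,2} - e_1t_3 + e_2$ and the three powers suffice. Evaluating the displayed identity at $s=1$ gives $\theta_{12}+\theta_{23}+\theta_{31} = 0$; at $s = t_3$, using $(1\,3)\cdot t_3 = t_1$ and $(2\,3)\cdot t_3 = t_2$, it gives exactly $q_+$; and at $s = t_3^{\,2}$ it gives $t_3^{\,2}\theta_{12}+t_1^{\,2}\theta_{23}+t_2^{\,2}\theta_{31}$, which (expand $e_1q_+$ and regroup using $\theta_{12}+\theta_{23}+\theta_{31}=0$ and $t_i^{-1}=t_jt_k$) equals $e_1q_+ + q_-$. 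Therefore $\mathrm{Alt}(R_F\theta_{12}) = R_F^W q_+ + R_F^W q_-$, and combined with the first paragraph this gives $(I_F^{\mathrm{sgn}})^W = R_F^W q_+ + R_F^W q_-$.

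The second assertion is then formal: $\Psi$ is an isomorphism of $R_F^W$-modules, so $\Psi\bigl((I_F^{\mathrm{sgn}})^W\bigr) = R_F^W\Psi(q_+) + R_F^W\Psi(q_-)$, and one only has to identify $\Psi(q_\pm) = q_\pm/\Delta$. Rewriting $q_+ = F(t_1)(t_3-t_2) + F(t_2)(t_1-t_3) + F(t_3)(t_2-t_1)$ and expanding along the first row exhibits $q_+$ as minus the $3\times 3$ determinant with rows $(F(t_1),F(t_2),F(t_3))$, $(t_1,t_2,t_3)$, $(1,1,1)$; the identical computation with $t_i$ replaced by $t_i^{-1}$ gives $q_-$, yielding \eqref{eqn:determinant}. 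I expect the main obstacle to be organisational rather than conceptual: pinning down the module structure of $R_F^{\langle(1\,2)\rangle}$ over $R_F^W$ exactly (the finite free rank-three structure with basis $\{1,t_3,t_3^{\,2}\}$), since this is what collapses the a priori unbounded generating set of $\mathrm{Alt}(I_F^{\mathrm{sgn}})$ to the two elements $q_\pm$, together with keeping the several sign conventions in the $W$- and sign-twisted $W$-actions consistent.
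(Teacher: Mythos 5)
Your proof is correct and takes a genuinely different, and in fact more economical, route than the paper's. The paper also uses the averaging map $\alpha$ (which, unwinding the sign-twist, is your antisymmetriser $\mathrm{Alt}$ on the ordinary $W$-module $R_F$), but then invokes Steinberg's theorem to produce a rank-$6$ free basis $\beta=\{e,t_2,t_3,t_2^{-1},t_1^{-1},t_1^{-1}t_3\}$ of $R_F$ over $R_F^W$ and computes the twelve elements $\alpha(\theta_{12}y)$, $\alpha(\theta_{23}y)$ for $y\in\beta$ one by one. Your two preliminary reductions --- (i) the $W$-equivariance identity $\mathrm{Alt}(R_F\theta_{23}) = \mathrm{Alt}(R_F\theta_{12})$, and (ii) the formula $3\,\mathrm{Alt}(p\,\theta_{12}) = s\,\theta_{12} + ((1\,3)\cdot s)\theta_{23} + ((2\,3)\cdot s)\theta_{31}$ with $s = \tfrac12(p + (1\,2)p)$ --- collapse this to a single ideal and then replace the rank-$6$ ambient $R_F$ by the rank-$3$ intermediate invariant ring $R_F^{\langle(1\,2)\rangle}$, for which the generating set $\{1,t_3,t_3^2\}$ is immediate from the monic cubic $t_3^3 - e_1t_3^2 + e_2t_3 - 1 = 0$, with no need for Steinberg. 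That cuts the case check from twelve evaluations to three, and the three outputs $0$, $q_+$, $e_1q_+ + q_-$ land cleanly in the span of $q_\pm$. I checked the formula (ii) (the two cosets of $\langle(1\,2)\rangle$ inside $S_3$ pair up exactly as you claim, via $(1\,2\,3)=(1\,3)(1\,2)$ and $(1\,3\,2)=(2\,3)(1\,2)$), the evaluation at $s=t_3^2$ (using $\theta_{12}+\theta_{23}+\theta_{31}=0$ and $t_i^{-1}=t_jt_k$), and the determinant expansion for $\Psi(q_\pm)$; all are right. The one point worth stating explicitly in a write-up is why the map $\Q[e_1,e_2][X]/(X^3 - e_1X^2 + e_2X - 1)\to R_F^{\langle(1\,2)\rangle}$, $X\mapsto t_3$, is an isomorphism (surjectivity is what you argued; injectivity follows since both sides are domains with the same degree-$3$ fraction field over $\Q(e_1,e_2)$), but for the lemma you only need surjectivity, i.e.\ generation, which you do establish.
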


\begin{proof}
	First note that $\theta_{12} + \theta_{23} + \theta_{31} = 0$. Therefore $q_{\pm} \in (I_F^{\mathrm{sgn}})^W$. The module $R(\bT^2)$ is free of rank $6$ over $R(\bT^2)^W$ by \cite[Thm.~2.2]{Steinberg-Pittie:1975}. Thus, the same is true for $R_F$ as a module over $R_F^W$. An explicit basis is given by $\beta = \{e, t_2, t_3, t_2^{-1}, t_1^{-1}, t_1^{-1}t_3 \}$. Consider the averaging map
	\[
		\alpha \colon R_F^{\mathrm{sgn}} \to (R_F^{\mathrm{sgn}})^W \quad, \quad p \mapsto \frac{1}{6} \sum_{g \in W} g \cdot p\ .
	\]
	This is a surjective module homomorphism, which maps the submodule $I_F^{\mathrm{sgn}}$ onto $(I_F^{\mathrm{sgn}})^W$. Let $q \in (I_F^{\mathrm{sgn}})^W$ and choose $p \in I_F^{\mathrm{sgn}}$ such that $\alpha(p) =q$. Then we have 
	\[
		p = \theta_{12}p_{1} + \theta_{23}p_2
	\]
	for suitable $p_i \in R_F$. We have to see that $q = \alpha(p)$ is in the submodule generated by $q_{\pm}$. After decomposing $p_1$ and $p_2$ with respect to $\beta$ we see that it suffices to check that $\alpha(\theta_{12}y)$ and $\alpha(\theta_{23}y)$ lie in this submodule for all $y \in \beta$. Since $\alpha(\theta_{12}) = \alpha(\theta_{23}) = 0$, this is true for $y = e$. We have 
	\begin{align*}
		\alpha(\theta_{12}t_2) &= \frac{1}{6}\left( (\theta_{12} + \theta_{23})t_2 + (\theta_{12} + \theta_{31})t_1 + (\theta_{31} + \theta_{23})t_3 \right) \\
		&= -\frac{1}{6}\left(\theta_{31}t_2 + \theta_{23}t_1 + \theta_{12}t_3\right) = - \frac{1}{6}q_+ \\
		\alpha(\theta_{12}t_3) &= \frac{1}{3}(\theta_{12}t_3 + \theta_{31} t_2 + \theta_{23}t_1) = \frac{1}{3} q_+
	\end{align*}
	The cases $\alpha(\theta_{23}t_2)$ and $\alpha(\theta_{23}t_3)$ work in a similar way. The expressions $\alpha(\theta_{12}t_2^{-1})$, $\alpha(\theta_{23}t_2^{-1})$, $\alpha(\theta_{12}t_1^{-1})$ and $\alpha(\theta_{23}t_1^{-1})$ produce corresponding multiples of $q_-$. In the remaining two cases a short computation shows that
	\begin{align*}
		\alpha(\theta_{12}t_1^{-1}t_3) &= \frac{1}{6} q_+ \cdot (t_1^{-1} + t_2^{-1} + t_3^{-1}) \ ,\\
		\alpha(\theta_{23}t_1^{-1}t_3) &= \frac{1}{6} q_- \cdot (t_1 + t_2 + t_3)\ .
	\end{align*}
	This shows that the submodule $(I_F^{\mathrm{sgn}})^W$ is generated by $q_+$ and $q_-$. The determinant formula follows from a straightforward computation. 
\end{proof}

\begin{example}
In case of the classical twist, i.e.\ for $F = (\extp^{\mathrm{top}})^{\otimes m}$ we have $F(t_i) = t_i^m$. For $\Psi(q_+)$ equation \eqref{eqn:determinant} boils down to the definition of the Schur polynomial for the partition with just one element \cite[I.3, p.~40]{Macdonald-symmetric:2015}. In this case the Schur polynomial agrees with the complete homogeneous symmetric polynomial $h_{m-2}$. Using the properties of the determinant we also have 
\[
	\Psi(q_-) = -\frac{1}{\Delta} \det \begin{pmatrix}
		t_1^{m+1} &  t_3^{m+1} & t_3^{m+1} \\
		1 & 1 & 1 \\
		t_1 & t_2 & t_3 
	\end{pmatrix} = \frac{1}{\Delta} \det \begin{pmatrix}
		t_1^{m+1} &  t_3^{m+1} & t_3^{m+1} \\
		t_1 & t_2 & t_3 \\
		1 & 1 & 1 
	\end{pmatrix} 
\]
which produces $h_{m-1}$. Altogether we obtain
\[
	\Psi(q_+) = -h_{m-2} \quad , \quad \Psi(q_-) = h_{m-1}\ .
\]
For $m = 0$ we have $q_- = q_+ = 0$, for $m=1$ we get $q_+ = 0$ and $q_- = 1$ and in the case $m = 2$ the submodule $(I^{\mathrm{sgn}}_F)^W$ is generated by $q_+ = 1$ and $q_- = h_1$. Thus, for $m \in \{1,2\}$ the quotient $R_F^W/I_F^W$ vanishes.
\end{example}

\begin{example} \label{ex:full_twist}
For the $m$th power of the exterior algebra twist $F = \left( \extp^* \right)^{\otimes m}$ we have $F(t_i) = (1 + t_i)^m$ and since
\[
	F(t_j) = \sum_{l=0}^m \binom{m}{l} t_j^l
\]
the determinant formula for $\Psi(q_{\pm})$ gives
\[
	\Psi(q_+) = -\sum_{l=2}^m \binom{m}{l} h_{l-2} \quad , \quad \Psi(q_-) = \sum_{l=1}^m \binom{m}{l} h_{l-1}\ .
\]
\end{example}

We are now in the position to provide a complete computation of the equivariant higher twisted $K$-theory for $SU(3)$ after rationalisation and summarise our results in the following theorem.

\begin{theorem} \label{thm:main_theorem_SU(3)}
	For the rational equivariant higher twisted $K$-theory of $SU(3)$ with twist given by an exponential functor $F$ with $\deg(F(t)) > 0$ we have the following isomorphism of $R_F(SU(3))$-modules: 
	\[
		K_0^G(C^*(\cE)) \otimes \Q \cong (R_F(SU(3)) \otimes \Q)/J_F \quad, \quad K_1^G(C^*(\cE)) \otimes \Q \cong 0
	\] 
	where $J_F$ is the submodule generated by the two representations $\sigma_1^F$ and $\sigma_2^F$ whose characters $\chi_1, \chi_2$ are the symmetric polynomials  
	\[
		\chi_1 = \frac{1}{\Delta} \det \left(\begin{smallmatrix}
			F(t_1) & F(t_2) & F(t_3) \\[1mm]
			t_1 & t_2 & t_3 \\[1mm]
			1 & 1 & 1
		\end{smallmatrix}\right) \quad , \quad
		\chi_2 = \frac{1}{\Delta} \det \left(\begin{smallmatrix}
			F(t_1)t_1 & F(t_2)t_2 & F(t_3)t_3 \\[1mm]
			t_1 & t_2 & t_3 \\[1mm]
			1 & 1 & 1
		\end{smallmatrix}\right)\ .
	\] 
	In the case $F = \left(\extp^*\right)^{\otimes m}$ the submodule $J_F$ is generated by the representations
	\[
		\sigma_1^F = \sum_{l=2}^m \binom{m}{l} {\mathrm{Sym}}^{l-2}(\rho) \quad , \quad \sigma_2^F = \sum_{l=1}^m \binom{m}{l} {\mathrm{Sym}}^{l-1}(\rho)\ .
	\]
\end{theorem}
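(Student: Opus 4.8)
The plan is to assemble the machinery built up in this section. By Cor.~\ref{cor:spectral_seq} together with Prop.~\ref{prop:module_structure} there is a spectral sequence of $R_F(SU(3))$-modules converging to $K^G_*(C^*(\cE))$ whose $E_1$-page vanishes in odd rows and in even rows equals the complex
\[
	R_F(SU(3))^3 \xrightarrow{d_0} R_F(U(2))^3 \xrightarrow{d_1} R_F(\bT^2)
\]
of Lemma~\ref{lem:diff_d0_d1}. Rationalising and applying Lemma~\ref{lem:coh_with_coeff}, this complex is isomorphic, as a complex of $R_F(SU(3))\otimes\Q$-modules, to the one computing the Bredon cohomology $H^*_{\affW}(\liet;\cRQ)$, so that $E_2^{p,q}\otimes\Q\cong H^p_{\affW}(\liet;\cRQ)$ for $q$ even and $0$ for $q$ odd.

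First I would feed in the homological input already available. Combining the rationalised identification $H^*_{\affW}(\liet;\cRQ)\cong H^*_{\pi_1(\bT^2)}(\liet;\cRQ)^W$ with Thm.~\ref{thm:Koszul} and using $\deg(F(t))>0$, the groups $H^k_{\pi_1(\bT^2)}(\liet;\cRQ)$ are zero for $k\neq 2$, hence $H^k_{\affW}(\liet;\cRQ)=0$ for $k\neq 2$ and $H^2_{\affW}(\liet;\cRQ)\cong (R_F^{\mathrm{sgn}})^W/(I_F^{\mathrm{sgn}})^W$, where $I_F^{\mathrm{sgn}}=(F(t_2)-F(t_1),\,F(t_3)-F(t_2))$. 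Therefore the rationalised $E_2$-page is concentrated in the single column $p=2$; there is no room for any higher differential, the spectral sequence collapses at $E_2$, the associated filtration is trivial, and reading off total degrees gives $K^G_1(C^*(\cE))\otimes\Q=0$ and $K^G_0(C^*(\cE))\otimes\Q\cong (R_F^{\mathrm{sgn}})^W/(I_F^{\mathrm{sgn}})^W$. All identifications above are $R_F^W\cong R_F(SU(3))$-linear, so this is an isomorphism of $R_F(SU(3))$-modules.

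Next I would pass to the explicit description via the Vandermonde isomorphism $\Psi\colon (R_F^{\mathrm{sgn}})^W\xrightarrow{\sim} R_F^W$ and Lemma~\ref{lem:submod}, which shows that $\Psi((I_F^{\mathrm{sgn}})^W)$ is generated by the two $3\times 3$ determinantal expressions $\Psi(q_\pm)$. Comparing with the statement, $\chi_1$ is literally $-\Psi(q_+)$, while rescaling the $j$-th column by $t_j^{-1}$ (allowed because $t_1t_2t_3=1$) and transposing two rows identifies $\chi_2$ with $\Psi(q_-)$. Being antisymmetric Laurent polynomials divided by the Vandermonde determinant $\Delta$, both $\chi_1,\chi_2$ are symmetric Laurent polynomials, hence lie in $R(\bT^2)^W\otimes\Q\cong R(SU(3))\otimes\Q\hookrightarrow R_F(SU(3))\otimes\Q$; these are the (virtual) characters $\sigma_1^F,\sigma_2^F$. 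Under the Weyl isomorphism $R_F(SU(3))\otimes\Q\cong R_F^W$ we conclude $K^G_0(C^*(\cE))\otimes\Q\cong (R_F(SU(3))\otimes\Q)/J_F$ with $J_F=(\sigma_1^F,\sigma_2^F)$. For $F=\left(\extp^*\right)^{\otimes m}$, Example~\ref{ex:full_twist} evaluates the determinants to $\Psi(q_+)=-\sum_{l=2}^m\binom{m}{l}h_{l-2}$ and $\Psi(q_-)=\sum_{l=1}^m\binom{m}{l}h_{l-1}$; since the complete homogeneous symmetric polynomial $h_k(t_1,t_2,t_3)$ is the character of $\mathrm{Sym}^k(\rho)$ for the standard representation $\rho$ of $SU(3)$, the displayed generators follow.

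The work here is organisational rather than conceptual: the genuinely hard input — regularity of the Koszul sequence (Lemma~\ref{lem:reg_seq}), the Koszul computation (Thm.~\ref{thm:Koszul}), and the determinantal description of $(I_F^{\mathrm{sgn}})^W$ (Lemma~\ref{lem:submod}) — is already in place. The main point requiring care is that every link in the chain of identifications (comparison of spectral sequences, functoriality of $W$-invariants, the map $\Psi$, and the Weyl isomorphism) is $R_F(SU(3))$-linear, so that the final statement is an isomorphism of modules and not merely of abelian groups, together with the bookkeeping of the determinant manipulations that matches $\Psi(q_\pm)$ with the characters $\chi_1,\chi_2$ and verifies that these are honest elements of $R_F(SU(3))\otimes\Q$.
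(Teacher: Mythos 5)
Your proposal is correct and follows essentially the same route as the paper's proof: rationalise (via tensoring with the universal UHF-algebra $\mathcal{Q}$, in the paper's phrasing), invoke Lemma~\ref{lem:coh_with_coeff} to convert the $E_1$-page into the Bredon cochain complex for $\liet$, use Thm.~\ref{thm:Koszul} to see it is concentrated in cohomological degree $2$, and then combine with Lemma~\ref{lem:submod} and Example~\ref{ex:full_twist} to produce the explicit generators. The extra bookkeeping you supply — the degeneration of the spectral sequence, the column-rescaling/row-swap identifying $\chi_2$ with $\Psi(q_-)$, and the $R_F(SU(3))$-linearity of each identification — is all sound and merely makes explicit what the paper leaves implicit.
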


\begin{proof}
	Let $\mathcal{Q}$ be the universal UHF-algebra equipped with the trivial action. By continuity of the $K$-functor the $K$-theory of $C^*(\cE) \otimes \mathcal{Q}$ is the rationalisation of the $K$-theory of $C^*(\cE)$. To compute $K_*^G(C^*(\cE) \otimes \mathcal{Q})$ we can use the corresponding spectral sequence from Prop.~\ref{prop:spectral_seq}. The resulting cochain complex will have $R_F(G_I) \otimes \Q$ in place of $R_F(G_I)$ in each degree. Lemma~\ref{lem:coh_with_coeff} identifies it as the complex computing the $\affW$-equivaraint cohomology of $\liet$ with respect to the coefficient system~$\cRQ$. Thus, the homomorphism \eqref{eqn:rational_coh} is now an isomorphism. Combining Thm.~\ref{thm:Koszul} with Lemma~\ref{lem:submod} we obtain the first part of the statement. The second part follows from Example~\ref{ex:full_twist} by identifying the characters given by the homogeneous symmetric polynomials with symmetric powers of the standard representation.
\end{proof}

\begin{remark}
	The choice of the orientation of $\bT$ that went into the construction of $\cE$ through the choice of order on $\bT \setminus \{1\}$ features in the computations of the twisted $K$-groups as follows: Changing the orientation to its opposite corresponds to replacing all factors of the form $F(t)$ by $F(t)^{-1}$. Since the ideal $I_F^{\mathrm{sgn}}$ is invariant under this transformation, we obtain isomorphic higher twisted $K$-groups in both cases.  
\end{remark}

We expect Thm.~\ref{thm:main_theorem_SU(3)} also to be true without the rationalisation. Since the modules $R_F(G_I)$ are free over $R_F(SU(3))$ the differentials from Lemma~\ref{lem:diff_d0_d1} in the cochain complex can be expressed in terms of matrices, which allowed us to perform an extensive computer analysis in the case of the full twist $F = \left( \extp^* \right)^{\otimes m}$ for $m \in \{1,\dots, 8\}$. For these levels we thereby confirmed the above conjecture. 

The approach presented above should also be seen as a blueprint for the computation of the rationalised equivariant higher twisted $K$-theory of $SU(n)$. We will return to this discussion in future work.


\appendix
\section{Proof of Thm.~\ref{thm:extension_of_Fell_bdls}}
Let $\text{inv} \colon \cG \to \cG$ be given by $\text{inv}(\g) = \g^{-1}$. We first extend $\cE_{0,+}$ over $\cG_-$ by defining $\pi \colon \cE_- \to \cG_-$ as $\cE_- = \left(\text{inv}^*\cE_+\right)^\op$. This means that we have $\left(\cE_-\right)_\g = \left(\cE_+\right)_{\g^{-1}}^\op$ fibrewise for all $\g \in \cG_-$. Let $\cE = \cE_- \cup \cE_{0,+}$. Together with the canonical quotient map to $\cG$ this is a Banach bundle. The conjugate linear bijection $e \mapsto e^*$ from \eqref{eqn:star_on_mod} yields a well-defined $*$-operation on $\cE$. It fits into the commutative diagram
\[
\begin{tikzcd}
	\cE \ar[r,"*"] \ar[d,"\pi" left] & \cE \ar[d,"\pi"] \\
	\cG \ar[r,"\text{inv}" below] & \cG
\end{tikzcd}
\]

Next we have to extend the multiplication map $\mu$ to all of $\cE$, i.e.\ we have to construct a bimodule isomorphism $\mu \colon \cE_{\g_1} \otimes_A \cE_{\g_2} \to \cE_{\g_1\g_2}$ for $(\g_1,\g_2) \in \cG^{(2)}$. Depending on which subset $\g_1, \g_2$ and $\g_1\g_2$ are contained in, there are six cases to consider:
\begin{center}
\begin{tabular}{|c|c|c|c|}
\hline
& $\g_1$ & $\g_2$ & $\g_1\cdot \g_2$\\
\hline
1 & $+$ & $+$ & $+$ \\
2 &$+$ & $-$ & $+$ \\
3 &$+$ & $-$ & $-$ \\
4 &$-$ & $+$ & $+$ \\
5 &$-$ & $+$ & $-$ \\
6 &$-$ & $-$ & $-$ \\
\hline
\end{tabular}
\end{center}
A $+$, respectively $-$, refers to the case that the groupoid element is in $\cG_{0,+}$, respectively $\cG_-$. We need the relation 
\[
	(e_1 \cdot e_2)^* = e_2^* \cdot e_1^*
\] 
to hold in a Fell bundle. This implies that if we have defined the multiplication in case $k$, then we have fixed it in case $(7-k)$ as well. This reduces the number of cases to consider to the first three. 

We will use the following graphical representation of groupoid elements: A morphism in $\cG_{0,+}$ will be drawn as an arrow pointing right, a morphism in $\cG_-$ corresponds to an arrow pointing left.\footnote{Contrary to the usual notation of morphisms we will draw the arrows from the codomain to the domain. This way the order of composition agrees with the composition of the arrows.} Let $\g_1,\g_2 \in \cG^{(2)}$. If $\g_1\cdot \g_2$ ends up in $\cG_{0,+}$, then the concatenation of the two corresponding arrows will end in a point to the right of the base of $\g_1$. Similarly, a composition ending up in $\cG_-$ will end up in a point left of the base of $\g_1$. Cases $1,2$ and $3$ are drawn as follows:
\begin{center}
\begin{tikzpicture}[->,>=stealth',node distance=1.4cm,scale=1]
	\node (11) {$\bullet$};
	\node (12) [right of=11] {$\bullet$};
	\node (13) [right of=12] {$\bullet$};
	
	\path (11) edge node [above] {$\g_{1}$} (12);
	\path (12) edge node [above] {$\g_{2}$} (13);

	\node at (4.5,0) (21) {$\bullet$};
	\node (22) [right of=21] {$\bullet$};
	\node (23) [right of=22] {$\bullet$};
	
	\path (21) edge[bend left] node [above] {$\g_{1}$} (23);
	\path (23) edge node [below] {$\g_{2}$} (22);

	\node at (9,0) (31) {$\bullet$};
	\node (32) [right of=31] {$\bullet$};
	\node (33) [right of=32] {$\bullet$};
	
	\path (32) edge node [above] {$\g_{1}$} (33);
	\path (33) edge[bend left] node [below] {$\g_{2}$} (31);
\end{tikzpicture}
\end{center}
The multiplication is already defined in case $1$. For case $2$, let $(\g_1, \g_2) \in \cG^{(2)}$ with $\g_1 \in \cG_{0,+}$, $\g_2 \in \cG_-$, $\g_1 \cdot \g_2 \in \cG_{0,+}$. Observe that $\g_1 = \g_1\g_2 \cdot \g_2^{-1}$ is a decomposition of $\g_1$ into elements that are contained in $\cG_{0,+}$. This can be easily read off from the above graphical representation. Since
\[
	\mu \colon (\cE_{0,+})_{\g_1\g_2} \otimes_A (\cE_{0,+})_{\g_2^{-1}} \to (\cE_{0,+})_{\g_1}
\]
is an isomorphism, we can extend $\mu$ by defining it to be the upper horizontal arrow in the diagram below, in which the vertical arrow on the right hand side is given by right multiplication:
\[
	\begin{tikzcd}
		(\cE_{0,+})_{\g_1} \otimes_A (\cE_{-})_{\g_2} \ar[rr,"\mu"] & & (\cE_{0,+})_{\g_1\g_2} \\
		(\cE_{0,+})_{\g_1\g_2} \otimes_A (\cE_{0,+})_{\g_2^{-1}} \otimes_A (\cE_{0,+})^\op_{\g_2^{-1}} \ar[u,"\mu \otimes \id{}", "\cong" right] \ar[rr,"\id{} \otimes \lscal{A}{\cdot}{\cdot}" below, "\cong" above ] & & (\cE_{0,+})_{\g_1\g_2} \otimes_A A \ar[u,"\cong"]
	\end{tikzcd}
\]
If we label the arrows by Fell bundle elements instead of groupoid morphisms, this definition will graphically be represented as follows:
\begin{center}
\begin{tikzpicture}[->,>=stealth',node distance=1.4cm,scale=1]
	\node at (4.5,0) (11) {$\bullet$};
	\node (12) [right of=11] {$\bullet$};
	\node (13) [right of=12] {$\bullet$};
	
	\path (11) edge[bend left] node [above] {$e_{1}$} (13);
	\path (13) edge node [below] {$e^*_{2}$} (12);

	\node (eq) [right of=13] {$\leadsto$};

	\node (21) [right of=eq] {$\bullet$};
	\node (22) [right of=21] {$\bullet$};
	\node (23) [right of=22] {$\bullet$};

	\path (21) edge node [above] {$e'_{12}$} (22);
	\path (22) edge[bend left] node [above] {$e'_{2}$} (23);
	\path (23) edge[bend left] node [below] {$e^*_{2}$} (22);

\end{tikzpicture}
\end{center}
where $e_{12}' \cdot e_2' = e_1$ and the inner product is used to replace the loop on the right hand side with an element in $A$. 

Consider case $3$, i.e.\  $(\g_1, \g_2) \in \cG^{(2)}$ with $\g_1 \in \cG_{0,+}$, $\g_2 \in \cG_-$, $\g_1 \cdot \g_2 \in \cG_{-}$. Using \eqref{eqn:op_of_tensor} and the multiplication $\mu$ we obtain an isomorphism 
\[
	\mu^\op \colon (\cE_{0,+})_{\g_1}^\op \otimes_A (\cE_{0,+})^\op_{(\g_1\g_2)^{-1}} \to (\cE_{0,+})^\op_{\g_2^{-1}}
\]
and we can extend the multiplication to this case using the upper horizontal arrow in the diagram below:
\[
	\begin{tikzcd}
		(\cE_{0,+})_{\g_1} \otimes_A (\cE_{-})_{\g_2} \ar[rr,"\mu"] & & (\cE_{-})_{\g_1\g_2} \\
		(\cE_{0,+})_{\g_1} \otimes_A (\cE_{0,+})_{\g_1}^\op \otimes_A (\cE_{0,+})^\op_{(\g_1\g_2)^{-1}} \ar[u,"\id{} \otimes \mu{}^\op", "\cong" right] \ar[rr,"\lscal{A}{\cdot}{\cdot} \otimes \id{}" below, "\cong" above ] & & A \otimes (\cE_{0,+})^\op_{(\g_1\g_2)^{-1}} \ar[u,"\cong"]
	\end{tikzcd}
\]
Graphically this definition is represented as follows:
\begin{center}
\begin{tikzpicture}[->,>=stealth',node distance=1.4cm,scale=1]
	\node at (9,0) (11) {$\bullet$};
	\node (12) [right of=11] {$\bullet$};
	\node (13) [right of=12] {$\bullet$};
	
	\path (12) edge node [above] {$e_{1}$} (13);
	\path (13) edge[bend left] node [below] {$e^*_{2}$} (11);

	\node (eq) [right of=13] {$\leadsto$};

	\node (21) [right of=eq] {$\bullet$};
	\node (22) [right of=21] {$\bullet$};
	\node (23) [right of=22] {$\bullet$};

	\path (22) edge[bend left] node [above] {$e_1$} (23);
	\path (23) edge[bend left] node [below] {$(e'_{1})^*$} (22);
	\path (22) edge node [below] {$(e'_{12})^*$} (21);

\end{tikzpicture}
\end{center}
i.e.\ we decompose $e_2^* = (e_1')^* \cdot (e_{12}')^*$ for some $(e_1')^* \in (\cE_{0,+})_{\g_1}^\op$ and $(e_{12}')^* \in (\cE_-)_{\g_1\g_2}$ and define
\[
	e_1 \cdot e_2^* = \lscal{A}{e_1}{e_1'}\cdot (e_{12}')^*\ .
\]
This finishes the extension of the multiplication map $\mu$.

Next we have to prove that the extended multiplication is still associative. Let $\g_1, \g_2, \g_3 \in \cG$ such that $(\g_1,\g_2) \in \cG^{(2)}$ and $(\g_2,\g_3) \in \cG^{(2)}$. Let $e_i \in \cE_{\g_i}$ for $i \in \{1,2,3\}$. We have to show that 
\[
	(e_1 \cdot e_2) \cdot e_3 = e_1 \cdot (e_2 \cdot e_3)
\]
Each $\g_i$ could be in $\cG_{0,+}$ or in $\cG_-$. Thus, if we neglect the compositions for a moment, this leaves us with six cases to consider. However, the above equality implies 
\[
	e_3^* \cdot (e_2^* \cdot e_1^*) = (e_3^* \cdot e_2^*) \cdot e_1^*\ .
\]
Therefore we can without loss of generality assume that $\g_2 \in \cG_{0,+}$. The diagrams of all remaining cases are shown in Figure~\ref{fig:associativity}. To prove the associativity condition in each case we make the following observations:

The given multiplication $\mu$ on $\cG_{0,+}$ is fibrewise a bimodule isomorphism. Thus, whenever we have to decompose an element of $e \in \cE$, we may without loss of generality assume that it is maximally decomposed with respect to the diagram. This means that in terms of the graphical representation we may make the following replacements:
\begin{center}
\begin{tikzpicture}[->,>=stealth',node distance=1.4cm,scale=1]
	\node (11) {$\bullet$};
	\node (12) [right of=11] {$\bullet$};
	\node (13) [right of=12] {$\bullet$};
	
	\path (11) edge[bend left] node[above] {$e$} (13);
	
	\node (eq) [right of=13] {$\leadsto$};

	\node (21) [right of=eq] {$\bullet$};
	\node (22) [right of=21] {$\bullet$};
	\node (23) [right of=22] {$\bullet$};

	\path (21) edge node[above] {$e_1$} (22);
	\path (22) edge node[above] {$e_2$} (23);

	\node at (-1.4,-1.6) (31) {$\bullet$};
	\node (32) [right of=31] {$\bullet$};
	\node (33) [right of=32] {$\bullet$};
	\node (34) [right of=33] {$\bullet$};

	\path (31) edge[bend left] node[above] {$e'$} (34);

	\node (eq) [right of=34] {$\leadsto$};

	\node (41) [right of=eq] {$\bullet$};
	\node (42) [right of=41] {$\bullet$};
	\node (43) [right of=42] {$\bullet$};
	\node (44) [right of=43] {$\bullet$};

	\path (41) edge node[above] {$e_1$} (42);
	\path (42) edge node[above] {$e_2$} (43);
	\path (43) edge node[above] {$e_3$} (44);

\end{tikzpicture}
\end{center}
where $e = e_1\cdot e_2$ in the first case and $e' = e_1\cdot e_2 \cdot e_3$ in the second. Note that associativity of $\mu$ over $\cG_{0,+}$ ensures that we may drop the brackets in the second case. Likewise, we may assume the analogous decomposition for the mirror images of these diagrams with arrows pointing to the left.

By our definition of the extension of $\mu$ to $\cG_-$ associativity is also satisfied in diagrams that have one of the following forms:
\begin{center}
	
\begin{tikzpicture}[->,>=stealth']
	\node (11)  {$\bullet$};
	\node (12) [right of=11] {$\bullet$};
	\node (13) [right of=12] {$\bullet$};

	\path (11) edge node [above] {} (12);
	\path (12) edge[bend left] node [below] {} (13);
	\path (13) edge[bend left] node [below] {} (12);

	\node (21) [right of=13] {$\bullet$};
	\node (22) [right of=21] {$\bullet$};
	\node (23) [right of=22] {$\bullet$};

	\path (22) edge[bend left] node [above] {} (23);
	\path (23) edge[bend left] node [below] {} (22);
	\path (22) edge node [below] {} (21);

	\node (31) [right of=23] {$\bullet$};
	\node (32) [right of=31] {$\bullet$};
	\node (33) [right of=32] {$\bullet$};

	\path (33) edge node [above] {} (32);
	\path (32) edge[bend right] node [below] {} (31);
	\path (31) edge[bend right] node [below] {} (32);

	\node (41) [right of=33] {$\bullet$};
	\node (42) [right of=41] {$\bullet$};
	\node (43) [right of=42] {$\bullet$};

	\path (42) edge[bend right] node [above] {} (41);
	\path (41) edge[bend right] node [below] {} (42);
	\path (42) edge node [below] {} (43);

\end{tikzpicture}

\end{center}

\begin{figure}[ht]
\centering
\begin{tikzpicture}[->,>=stealth',node distance=1.4cm,scale=1]
	\node[shape=circle,draw,inner sep=1pt] at (0,0.8) {1};
	
	\node (11) {$\bullet$};
	\node (12) [right of=11] {$\bullet$};
	\node (13) [right of=12] {$\bullet$};
	\node (14) [right of=13] {$\bullet$};
	
	\path (11) edge node [above] {$\g_{1}$} (12);
	\path (12) edge node [above] {$\g_{2}$} (13);
	\path (13) edge node [above] {$\g_{3}$} (14);

	\node[shape=circle,draw,inner sep=1pt] at (6,0.8) {2};

	\node at (6,0) (21) {$\bullet$};
	\node (22) [right of=21] {$\bullet$};
	\node (23) [right of=22] {$\bullet$};
	\node (24) [right of=23] {$\bullet$};

	\path (21) edge node [above] {$\g_{1}$} (22);
	\path (22) edge[bend left] node [above] {$\g_{2}$} (24);
	\path (24) edge node [below] {$\g_{3}$} (23);

	\node[shape=circle,draw,inner sep=1pt] at (0,-1.2) {3};

	\node at (0,-2) (31) {$\bullet$};
	\node (32) [right of=31] {$\bullet$};
	\node (33) [right of=32] {$\bullet$};
	\node (34) [right of=33] {$\bullet$};

	\path (31) edge[bend left] node [above] {$\g_{1}$} (33);
	\path (33) edge node [above] {$\g_{2}$} (34);
	\path (34) edge[bend left] node [below] {$\g_{3}$} (32);

	\node[shape=circle,draw,inner sep=1pt] at (6,-1.2) {4};

	\node at (6,-2) (41) {$\bullet$};
	\node (42) [right of=41] {$\bullet$};
	\node (43) [right of=42] {$\bullet$};
	\node (44) [right of=43] {$\bullet$};

	\path (42) edge node [above] {$\g_{1}$} (43);
	\path (43) edge node [above] {$\g_{2}$} (44);
	\path (44) edge[bend left] node [above] {$\g_{3}$} (41);

	\node[shape=circle,draw,inner sep=1pt] at (0,-3.2) {5};

	\node at (0,-4) (51) {$\bullet$};
	\node (52) [right of=51] {$\bullet$};
	\node (53) [right of=52] {$\bullet$};
	\node (54) [right of=53] {$\bullet$};

	\path (52) edge node [above] {$\g_{1}$} (51);
	\path (51) edge[bend right] node [below] {$\g_{2}$} (53);
	\path (53) edge node [below] {$\g_{3}$} (54);

	\node[shape=circle,draw,inner sep=1pt] at (6,-3.2) {6};

	\node at (6,-4) (61) {$\bullet$};
	\node (62) [right of=61] {$\bullet$};
	\node (63) [right of=62] {$\bullet$};
	\node (64) [right of=63] {$\bullet$};

	\path (63) edge[bend right] node [above] {$\g_{1}$} (61);
	\path (61) edge node [below] {$\g_{2}$} (62);
	\path (62) edge[bend right] node [below] {$\g_{3}$} (64);

	\node[shape=circle,draw,inner sep=1pt] at (0,-5.2) {7};

	\node at (0,-6) (71) {$\bullet$};
	\node (72) [right of=71] {$\bullet$};
	\node (73) [right of=72] {$\bullet$};
	\node (74) [right of=73] {$\bullet$};

	\path (74) edge[bend right] node [below] {$\g_{1}$} (71);
	\path (71) edge node [below] {$\g_{2}$} (72);
	\path (72) edge node [below] {$\g_{3}$} (73);

	\node[shape=circle,draw,inner sep=1pt] at (6,-5.2) {8};

	\node at (6,-6) (81) {$\bullet$};
	\node (82) [right of=81] {$\bullet$};
	\node (83) [right of=82] {$\bullet$};
	\node (84) [right of=83] {$\bullet$};

	\path (82) edge node [above] {$\g_{1}$} (81);
	\path (81) edge[bend right] node [above] {$\g_{2}$} (84);
	\path (84) edge node [above] {$\g_{3}$} (83);

	\node[shape=circle,draw,inner sep=1pt] at (0,-7.7) {9};

	\node at (0,-8.5) (91) {$\bullet$};
	\node (92) [right of=91] {$\bullet$};
	\node (93) [right of=92] {$\bullet$};
	\node (94) [right of=93] {$\bullet$};

	\path (93) edge[bend right] node [above] {$\g_{1}$} (91);
	\path (91) edge[bend right=50] node [below] {$\g_{2}$} (94);
	\path (94) edge[bend left] node [below] {$\g_{3}$} (92);

	\node[shape=circle,draw,inner sep=1pt] at (6,-7.7) {10};

	\node at (6,-8.5) (101) {$\bullet$};
	\node (102) [right of=101] {$\bullet$};
	\node (103) [right of=102] {$\bullet$};
	\node (104) [right of=103] {$\bullet$};

	\path (104) edge[bend right] node [above] {$\g_{1}$} (101);
	\path (101) edge[bend right] node [below] {$\g_{2}$} (103);
	\path (103) edge node [above] {$\g_{3}$} (102);

	\node[shape=circle,draw,inner sep=1pt] at (0,-10.7) {11};

	\node at (0,-11.5) (111) {$\bullet$};
	\node (112) [right of=111] {$\bullet$};
	\node (113) [right of=112] {$\bullet$};
	\node (114) [right of=113] {$\bullet$};

	\path (113) edge node [above] {$\g_{1}$} (112);
	\path (112) edge[bend right] node [below] {$\g_{2}$} (114);
	\path (114) edge[bend right] node [above] {$\g_{3}$} (111);

	\node[shape=circle,draw,inner sep=1pt] at (6,-10.7) {12};

	\node at (6,-11.5) (121) {$\bullet$};
	\node (122) [right of=121] {$\bullet$};
	\node (123) [right of=122] {$\bullet$};
	\node (124) [right of=123] {$\bullet$};

	\path (124) edge[bend right] node [above] {$\g_{1}$} (122);
	\path (122) edge node [below] {$\g_{2}$} (123);
	\path (123) edge[bend left=50] node [below] {$\g_{3}$} (121);
\end{tikzpicture}
\caption{\label{fig:associativity}The 12 compositions to consider in the proof of associativity.}
\end{figure}

Using this it follows that associativity holds in the cases $1$, $2$, $5$ and $8$ in Fig.~\ref{fig:associativity}. The multiplication isomorphism $\mu \colon \left(\cE_{0,+}\right)_{\g_1} \otimes_A \left(\cE_{0,+}\right)_{\g_2} \to \left(\cE_{0,+}\right)_{\g_1\g_2}$ preserves inner products, hence we obtain
\[
	\lscal{A}{e_1 \cdot e_2}{e_1'\cdot e_2'} = \lscal{A}{e_1\cdot {\lscal{A}{e_2}{e_2'}} }{e_1'}\ ,
\]
which is diagrammatically represented by the following relation:
\begin{center}
\begin{tikzpicture}[->,>=stealth',node distance=1.3cm,scale=1]
	\node (11) {$\bullet$};
	\node (12) [right of=11] {$\bullet$};
	\node (13) [right of=12] {$\bullet$};

	\path (11) edge[bend left] node [above] {$e_1 \cdot e_2$} (13);
	\path (13) edge[bend left] node [below] {$(e_1' \cdot e_2')^*$} (11);
	
	\node (eq) [right of=13] {$\leadsto$};
	
	\node (21) [right of=eq] {$\bullet$};
	\node (22) [right of=21] {$\bullet$};
	\node (23) [right of=22] {$\bullet$};

	\path (21) edge[bend left] node [above] {$e_1$} (22);
	\path (22) edge[bend left] node [above] {$e_2$} (23);
	\path (23) edge[bend left] node [below] {$(e_2')^*$} (22);
	\path (22) edge[bend left] node [below] {$(e_1')^*$} (21);

\end{tikzpicture}
\end{center}
Again an analogous relation is true for the mirror images of the above diagrams. Our observation shows we can drop the brackets in expressions represented by diagrams of the form depicted on the right hand side. This implies that associativity holds in the cases $3$,~$4$,~$6$,~$7$ in Fig.~\ref{fig:associativity}. 

The last relation needed is the compatibility of the two inner products in each fibre. Let $e_1, e_2, e_3 \in (\cE_{0,+})_{\g}$ for some $\g \in \cG_{0,+}$. Then \eqref{eqn:comp_inner_prod} implies in our context that:
\begin{align*}
	e_1\,\rscal{e_2}{e_3}{A} &= \lscal{A}{e_1}{e_2}\,e_3\ ,\\
	e_1^*\,\lscal{A}{e_2}{e_3} &= \rscal{e_1}{e_2}{A}\,e_3^*\ .
\end{align*}
Expressed graphically this means that associativity holds for the diagrams:
\begin{center}
\begin{tikzpicture}[->,>=stealth',node distance=1.3cm,scale=1]
	\node (11) {$\bullet$};
	\node (12) [right of=11] {$\bullet$};

	\path (11) edge[bend left=90] node [above] {$e_1$} (12);
	\path (12) edge node [above] {$e_2^*$} (11);
	\path (11) edge[bend right=80] node [below] {$e_3$} (12);

	\node (21) [right of=12] {$\bullet$};
	\node (22) [right of=21] {$\bullet$};

	\path (22) edge[bend right=90] node [above] {$e_1^*$} (21);
	\path (21) edge node [above] {$e_2$} (22);
	\path (22) edge[bend left=80] node [below] {$e_3^*$} (21);
	
\end{tikzpicture}
\end{center}
Using this relation it follows that associativity holds in the cases $9$, $10$, $11$ and $12$ as well. Since the computations are slightly more involved than in the previous cases, we give the details for diagram $10$. Let $e_1^* \in \left(\cE_-\right)_{\g_1}$, $e_2 \in \left(\cE_{0,+}\right)_{\g_2}$ and $e_3^* \in \left(\cE_-\right)_{\g_3}$. Let $e_2 = e_{21} \cdot e_{22}$ and $e_3^*= e_{31}^* \cdot e_{32}^* \cdot e_{33}^*$ be the maximal decompositions of $e_2$ and $e_3$. We have
\begin{align*}
	(e_1^* \cdot e_2) \cdot e_3^* &= \lscal{A}{{\rscal{e_1}{e_{21}}{A}}\,e_{22}}{e_{31}}\,e_{32}^* \cdot e_{33}^* \\
	e_1^* \cdot (e_2 \cdot e_3^*) &= e_1^*\,\lscal{A}{e_{21} \cdot e_{22}}{e_{32} \cdot e_{31}}\,e_{33}^*
\end{align*}
and by our considerations above we obtain: 
\begin{align*}
	e_1^*\,\lscal{A}{e_{21} \cdot e_{22}}{e_{32} \cdot e_{31}}\,e_{33}^* &= e_1^*\,\lscal{A}{e_{21}\,{\lscal{A}{e_{22}}{e_{31}}}}{e_{32}}e_{33}^* \\
	&= \rscal{e_1}{e_{21}\,{\lscal{A}{e_{22}}{e_{31}}}}{A}\,e_{32}^*\cdot e_{33}^* \\
	&= \rscal{e_1}{e_{21}}{A}\,\lscal{A}{e_{22}}{e_{31}}\,e_{32}^*\cdot e_{33}^* \\
	&= \lscal{A}{{\rscal{e_1}{e_{21}}{A}}\,e_{22}}{e_{31}}\,e_{32}^* \cdot e_{33}^*
\end{align*}
The other cases are similar. This finishes the proof of associativity.

Thus, we obtain a Banach bundle $\cE \to \cG$ with a continuous, bilinear, associative multiplication $\mu$ and a compatible continuous conjugate linear involution $\ast \colon \cE \to \cE$. Our definition implies that 
\[
	e^* \cdot e = \rscal{e}{e}{A}
\]
for all $e \in \cE$. This implies the $C^*$-norm condition $\lVert e^* e \rVert = \lVert \rscal{e}{e}{A}\rVert = \lVert e \rVert^2$, which also ensures that the norm is submultiplicative for all $e \in \cE$. Therefore $\cE \to \cG$ defines a saturated Fell bundle.  

To address the question about uniqueness let $\mathcal{F} \to \cG$ be another saturated Fell bundle that satisfies the conditions in the theorem. In particular, we have $\mathcal{F}_{0,+} = \cE_{0,+}$. Therefore the involution yields a (linear!) isomorphism of Banach bundles 
\[
	\Theta \colon \mathcal{F}_{-} \to \left(\cE_+\right)^\op = \cE_-
\] 
The relation $(e_1 \cdot e_2)^* = e_2^* \cdot e_1^*$ shows that $\Theta$ has to intertwine the restrictions $\mu_{\mathcal{F}_-} \colon \mathcal{F}_- \otimes_A \mathcal{F}_- \to \mathcal{F}_-$ and $\mu_{\cE_-}$. The relation $\rscal{f_1^*}{f_2^*}{A} = \lscal{A}{f_1}{f_2}$ for all $f_1^*,f_2^* \in (\mathcal{F}_-)_g$ implies that $\Theta$ preserves the inner product as well. We have already seen above that our extension of the multiplication map was forced upon us by associativity considerations, the fact that $\mu$ induces fibrewise bimodule isomorphisms and the relation $f_1^* \cdot f_2 = \rscal{f_1}{f_2}{A}$ for $f_1, f_2 \in \mathcal{F}_g$. Consequently, if we extend $\Theta$ by the identity on $\mathcal{F}_{0,+} = \cE_{0,+}$ it yields an isomorphism of Fell bundles $\mathcal{F} \to \cE$ over $\cG$.

\begin{remark}
	Let $A$ be a unital separable $C^*$-algebra and let $\mathcal{MR}(A)$ be the $2$-groupoid that has $A$ as its objects, the $A$-$A$ equivalence bimodules as $1$-morphisms and bimodule isomorphisms as $2$-morphisms\footnote{The notation $\mathcal{MR}$ is for Morita-Rieffel.}. The groupoid $\cG$ is a $2$-groupoid with just identity $2$-morphisms. If we forget about the topology, then a saturated Fell bundle is a functor
	\[
		\cE \colon \cG \to \mathcal{MR}(A)\ ,
	\]
	 whereas saturated half-bundles correspond to functors 
	\[
		\cE \colon \cG_{0,+} \to \mathcal{MR}(A)\ .
	\]
	Theorem~\ref{thm:extension_of_Fell_bdls} can be rephrased by saying that the restriction functor 
	\[
		\text{res} \colon \mathcal{F}un(\cG,\mathcal{MR}(A)) \to \mathcal{F}un(\cG_{0,+},\mathcal{MR}(A))
	\]
	induced by the inclusion $\cG_{0,+} \to \cG$ is an isomorphism of functor categories. This seems to suggest that there should be a proof of Theorem~\ref{thm:extension_of_Fell_bdls} based on category theory. However, a first step would require identifying the right topology on $\mathcal{MR}(A)$ to obtain a bijection between saturated Fell bundles and continuous functors. 
\end{remark}

\end{document}